\begin{document}

\newtheorem{theorem}[subsection]{Theorem}
\newtheorem{proposition}[subsection]{Proposition}
\newtheorem{lemma}[subsection]{Lemma}
\newtheorem{corollary}[subsection]{Corollary}
\newtheorem{conjecture}[subsection]{Conjecture}
\newtheorem{prop}[subsection]{Proposition}
\numberwithin{equation}{section}
\newcommand{\mr}{\ensuremath{\mathbb R}}
\newcommand{\mc}{\ensuremath{\mathbb C}}
\newcommand{\dif}{\mathrm{d}}
\newcommand{\intz}{\mathbb{Z}}
\newcommand{\ratq}{\mathbb{Q}}
\newcommand{\natn}{\mathbb{N}}
\newcommand{\comc}{\mathbb{C}}
\newcommand{\rear}{\mathbb{R}}
\newcommand{\prip}{\mathbb{P}}
\newcommand{\uph}{\mathbb{H}}
\newcommand{\fief}{\mathbb{F}}
\newcommand{\majorarc}{\mathfrak{M}}
\newcommand{\minorarc}{\mathfrak{m}}
\newcommand{\sings}{\mathfrak{S}}
\newcommand{\fA}{\ensuremath{\mathfrak A}}
\newcommand{\mn}{\ensuremath{\mathbb N}}
\newcommand{\mq}{\ensuremath{\mathbb Q}}
\newcommand{\half}{\tfrac{1}{2}}
\newcommand{\f}{f\times \chi}
\newcommand{\summ}{\mathop{{\sum}^{\star}}}
\newcommand{\chiq}{\chi \bmod q}
\newcommand{\chidb}{\chi \bmod db}
\newcommand{\chid}{\chi \bmod d}
\newcommand{\sym}{\text{sym}^2}
\newcommand{\hhalf}{\tfrac{1}{2}}
\newcommand{\sumstar}{\sideset{}{^*}\sum}
\newcommand{\sumprime}{\sideset{}{'}\sum}
\newcommand{\sumprimeprime}{\sideset{}{''}\sum}
\newcommand{\sumflat}{\sideset{}{^\flat}\sum}
\newcommand{\shortmod}{\ensuremath{\negthickspace \negthickspace \negthickspace \pmod}}
\newcommand{\V}{V\left(\frac{nm}{q^2}\right)}
\newcommand{\sumi}{\mathop{{\sum}^{\dagger}}}
\newcommand{\mz}{\ensuremath{\mathbb Z}}
\newcommand{\leg}[2]{\left(\frac{#1}{#2}\right)}
\newcommand{\muK}{\mu_{\omega}}
\newcommand{\thalf}{\tfrac12}
\newcommand{\lp}{\left(}
\newcommand{\rp}{\right)}
\newcommand{\Lam}{\Lambda_{[i]}}
\newcommand{\lam}{\lambda}
\def\L{\fracwithdelims}
\def\om{\omega}
\def\pbar{\overline{\psi}}
\def\phis{\phi^*}
\def\lam{\lambda}
\def\lbar{\overline{\lambda}}
\newcommand\Sum{\Cal S}
\def\Lam{\Lambda}
\newcommand{\sumtt}{\underset{(d,2)=1}{{\sum}^*}}
\newcommand{\sumt}{\underset{(d,2)=1}{\sum \nolimits^{*}} \widetilde w\left( \frac dX \right) }
\newcommand{\sumh}{\sum \omega_f^{-1} }

\newcommand{\hf}{\tfrac{1}{2}}
\newcommand{\af}{\mathfrak{a}}
\newcommand{\Wf}{\mathcal{W}}

\theoremstyle{plain}
\newtheorem{conj}{Conjecture}
\newtheorem{remark}[subsection]{Remark}

\makeatletter
\def\widebreve{\mathpalette\wide@breve}
\def\wide@breve#1#2{\sbox\z@{$#1#2$}%
     \mathop{\vbox{\m@th\ialign{##\crcr
\kern0.08em\brevefill#1{0.8\wd\z@}\crcr\noalign{\nointerlineskip}%
                    $\hss#1#2\hss$\crcr}}}\limits}
\def\brevefill#1#2{$\m@th\sbox\tw@{$#1($}%
  \hss\resizebox{#2}{\wd\tw@}{\rotatebox[origin=c]{90}{\upshape(}}\hss$}
\makeatletter

\title[Bounds for moments of symmetric square $L$-functions]{Bounds for moments of symmetric square $L$-functions}

\author{Peng Gao}
\address{School of Mathematical Sciences, Beihang University, Beijing 100191, P. R. China}
\email{penggao@buaa.edu.cn}
\begin{abstract}
 We study the $2k$-th moment at the central point of the family of symmetric square $L$-functions attached to holomorphic Hecke cusp forms of  level one, weight $\kappa$. We establish sharp lower bounds for all real $k \geq 1/2$ unconditionally. Assuming the truth of the generalized Riemann hypothesis, we also obtain sharp lower bounds for all real $0 \leq k < 1/2$ and sharp upper bounds for all real $k \geq 0$.
\end{abstract}

\maketitle

\noindent {\bf Mathematics Subject Classification (2010)}: 11M06, 11F66, 11F67  \newline

\noindent {\bf Keywords}:  moments, symmetric square $L$-functions, lower bounds, upper bounds

\section{Introduction}
\label{sec 1}

  An important subject to study in number theory is the non-vanishing issue of central values of $L$-functions as these central values carry deep arithmetic information. For example, the well-known Birch and Swinnerton-Dyer conjecture relates the rank of an elliptic curve to the order of vanishing at the central point of the associated $L$-function. It is now a standard procedure in analytic number theory to apply the mollifier method to obtain a positive portion of non-vanishing result concerning a given family of $L$-functions. This approach goes back to the work of A. Selberg \cite{Selberg42}, who showed that a positive proportion of the non-trivial zeros of the Riemann zeta function $\zeta(s)$ lie on the critical line.

  The method of mollifier involves with constructing a short Dirichlet polynomial called a mollifier that approximates the inverse of the corresponding $L$-function.  A classical way to do so is based on the Dirichlet series of the inverse of the given $L$-function and a mollifier is essentially obtained using a truncation of the corresponding Dirichlet series.  In order to achieve a positive portion of non-vanishing result, one then needs to evaluate asymptotically the first and second moments of the family of $L$-functions twisted by the mollifier.  As this process heavily relies on information concerning the second moment, such tactic often becomes ineffective due to the lack of the needed result.

  In \cite{Gonek12}, S. M. Gonek constructed approximations of $\zeta(s)$ and Dirichlet $L$-functions using truncations of their Euler products to study the non-trivial zeros of these functions. In \cite{Radziwill&Sound}, M. Radziwi{\l\l} and K. Soundararajan developed an upper bounds principle to study upper bounds for moments of families of $L$-functions. A dual lower bounds principle was introduced by W. Heap and K. Soundararajan in \cite{H&Sound}. Both principles make crucial use of mollifiers that are constructed based on truncated Euler products of the $L$-functions involved. This way of obtaining mollifiers has been further carried out in the work of S. Lester and M. Radziwi{\l\l} \cite{LR21} to study sign changes of Fourier coefficients of half-integral weight modular forms and in the work of C. David, A. Florea and M. Lalin \cite{DFL21} to establish a positive proportion non-vanishing result of cubic $L$-functions in the function field setting. We point out here that, the lower bounds principle enunciated in \cite{H&Sound} actually allows one to achieve a positive portion of non-vanishing result concerning central values of families of $L$-functions, at least under the assumption of the generalized Riemann hypothesis (GRH). This approach becomes particularly appealing in situations when our understanding on the related second moment is insufficient for one to apply the classical mollifier method. In fact, one may circumvent the evaluation of the second moment using the new mollifiers by incorporating a powerful method of K. Soundararajan \cite{Sound2009} with its refinement by A. J. Harper \cite{Harper} to bound twisted moments of $L$-functions from the above under GRH. This strategy has been employed in \cite{G&Zhao10} to establish a positive portion of non-vanishing result of central values of cubic or  quartic Dirichlet $L$-functions on GRH.

  It is the aim of this paper to apply the new construction of the mollifiers to establish sharp bounds for moments of symmetric square $L$-functions concerning the central values of the family of symmetric square $L$-functions attached to holomorphic Hecke cusp forms as well as sharp bounds for moments at the central point of the same family. To state our result, we denote $H_{\kappa}$ for the set of holomorphic cusp forms $f$ of level one and weight $\kappa$ that are
eigenfunctions of every Hecke operator. Then the Fourier expansion of any $f \in H_{\kappa}$ at infinity can be written as
\begin{align}
\label{fFourier}
f(z) = \sum_{n=1}^{\infty} \lambda_f (n) n^{\frac{\kappa -1}{2}} e(nz),
\end{align}
  where we denote $e(z)$ for $e^{2 \pi i z}$.

 The symmetric square $L$-function of $f$ is then defined for $\Re(s)>1$ by (see \cite[p. 137]{iwakow} and \cite[(25.73)]{iwakow})
\begin{align*}
 L(s, \operatorname{sym}^2 f)=\zeta(2s) \sum_{n \geq 1}\frac {\lambda(n^2)}{n^s}.
\end{align*}

 An asymptotic formula for the first moment of $L(1/2, \operatorname{sym}^2 f)$ averaged over $H_{\kappa}$ as $\kappa \rightarrow \infty$ was obtained by  Y.-K. Lau \cite{Lau02} and was subsequently improved in \cites{Fomenko03, Khan07, Sun13, Ng17, Liu18, B&F18}. However, it remains a
 challenge problem to evaluate the second moment of the above family of $L$-functions asymptotically. Various upper bounds concerning the second moment can be found in \cite{I&M01, Luo12, Lam15}. Moreover, by taking an extra average over the weights, R. Khan \cite{Khan10} evaluated the twisted second moment asymptotically to establish a positive proportion of non-vanishing result of the enlarged family of $L$-functions. An asymptotic formula for the third moment concerning this larger family of symmetric $L$-functions is obtained in \cite{D&K18}.

  In order to treat the moments of central values of symmetric $L$-functions, it is natural to introduce the harmonic sum $\mathop{{\sum}^{h}}_{f \in S}$ over any subset $S \subset H_{\kappa}$ such that
\begin{align*}
  \mathop{{\sum}^{h}}_{f \in S}:=\sum_{f \in S} \omega^{-1}_f,
\end{align*}
  where $\omega_f$ is the harmonic weight given by
\begin{align}
\label{omegaf}
 \omega_f=\frac {(4\pi)^{\kappa-1}}{\Gamma(\kappa-1)}<f, f>=\frac {\kappa-1}{2\pi}L(1, \operatorname{sym}^2 f).
\end{align}
  Here $<f, f>$ denotes the Petersson inner product.

  The density conjecture of N. Katz and P. Sarnak \cite{K&S} implies that the distribution of zeros near the central point of a reasonable family of $L$-functions is governed by a symmetry type. In \cite{C&F00}, J.B. Conrey and D. W. Farmer further illustrated that such symmetry type also affects the behaviors of the moments of the corresponding family of $L$-functions. In particular, a conjecture on the $2k$-th moment of  $L(1/2, \operatorname{sym}^2 f)$ over $f \in H_{\kappa}$ is given on \cite[p. 887]{C&F00} for all real $k \geq 0$, which can be stated that (see also \cite[p. 2145]{Tang13}) as $\kappa \rightarrow \infty$,
\begin{align}
\label{Lfcnsk}
  \mathop{{\sum}^{h}}\limits_{f\in H_{\kappa}}|L(\frac 12, \operatorname{sym}^2 f)|^{2k} \sim (\log \kappa)^{k(2k+1)}.
\end{align}

  Following a general method developed by Z. Rudnick and K. Soundararajan in \cite{R&Sound, R&Sound1} towards establishing sharp
lower bounds for moments of $L$-functions, H. Tang \cite{Tang13} obtained lower bounds of the desired order of magnitude for all integers $k \geq 0$ for the family of $L$-functions given in \eqref{Lfcnsk}.

 Our first result in this paper extends the result in \cite{Tang13} to all real $k \geq 0$.
\begin{theorem}
\label{thmlowerbound}
    With the notation as above. For any real number $k \geq 1/2$, we have
\begin{align}
\label{lowerbound}
   \mathop{{\sum}^{h}}\limits_{f\in H_{\kappa}}|L(\frac 12, \operatorname{sym}^2 f)|^{2k} \gg_k (\log \kappa)^{k(2k+1)}.
\end{align}
  The above bounds continue to hold for $0 \leq k <1/2$ provided that we assume the truth of GRH.
\end{theorem}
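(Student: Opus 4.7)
The plan is to apply the Heap--Soundararajan lower bounds principle together with truncated Euler product mollifiers, which is precisely the strategy sketched in the introduction. For $k \geq 1/2$ the argument is unconditional, while for $0 \leq k < 1/2$ one additionally invokes the Harper--Soundararajan pointwise upper bound (valid under GRH) to bypass the negative Hölder exponent.

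First I would construct the mollifier. Fix a small $\theta > 0$ and a sequence $x_1 < x_2 < \cdots < x_J$ (a typical choice is $x_j = \kappa^{\theta/\ell_j^2}$ for a fast growing sequence $\ell_j$), chosen so that $x_J^{\ell_J} \leq \kappa^\theta$. On each prime interval $I_j = (x_{j-1}, x_j]$ define
\begin{align*}
\mathcal{P}_j(f) = \sum_{p \in I_j} \frac{\lambda_{\operatorname{sym}^2 f}(p)}{p^{1/2}}, \qquad \mathcal{N}_j(f,z) = \sum_{r=0}^{K_j} \frac{(z\mathcal{P}_j(f))^r}{r!},
\end{align*}
with truncation parameter $K_j$ comfortably larger than any plausible value of $|z\mathcal{P}_j(f)|$. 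Set $\mathcal{N}(f,z) = \prod_{j=1}^J \mathcal{N}_j(f,z)$; this is a short Dirichlet polynomial (supported on integers $\leq \kappa^\theta$) that morally approximates $L(\tfrac12,\operatorname{sym}^2 f)^z$.

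Next I would apply Hölder's inequality. For $k \geq 1/2$, with exponents $2k$ and $2k/(2k-1)$,
\begin{align*}
\mathop{{\sum}^{h}}_{f\in H_\kappa} L(\tfrac12,\operatorname{sym}^2 f) |\mathcal{N}(f,k-1)|^2 \leq \Bigl(\mathop{{\sum}^{h}}_{f\in H_\kappa} |L(\tfrac12,\operatorname{sym}^2 f)|^{2k}\Bigr)^{1/(2k)} \Bigl(\mathop{{\sum}^{h}}_{f\in H_\kappa} |\mathcal{N}(f,k-1)|^{4k/(2k-1)}\Bigr)^{(2k-1)/(2k)}.
\end{align*}
The lower bound \eqref{lowerbound} then reduces to (i) a matching lower bound on the twisted first moment on the left, and (ii) a matching upper bound on the mollifier moment on the right. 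For (i), I would expand $|\mathcal{N}(f,k-1)|^2$ as a Dirichlet polynomial in the $\lambda_f(n^2)$ using multiplicativity and the standard identity $\lambda_{\operatorname{sym}^2 f}(p) = \lambda_f(p^2)$, combine with an approximate functional equation for $L(\tfrac12,\operatorname{sym}^2 f)$, and invoke the Petersson trace formula. Since $|\mathcal{N}|^2$ has length at most $\kappa^{2\theta}$ with small $\theta$, the off-diagonal Kloosterman contribution is negligible, and the diagonal term produces a main term of size $(\log \kappa)^{k(2k+1)(2k-1)/(2k)}$ as predicted by the Katz--Sarnak heuristic. For (ii), since $4k/(2k-1)$ can be taken to be an even integer (or one uses positivity plus Hölder to reduce to an even integer), one expands $|\mathcal{N}|^{4k/(2k-1)}$ combinatorially and estimates it by Petersson again; matching the combinatorics of Euler products yields the desired upper bound $(\log \kappa)^{(k-1)^2 \cdot 4k/(2k-1) + O(1)}$. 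Combining (i) and (ii) gives exactly $(\log\kappa)^{k(2k+1)}$ on the right hand side.

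For $0 \leq k < 1/2$, the Hölder exponent $4k/(2k-1)$ becomes negative, so instead of bounding a moment of $|\mathcal{N}|^{4k/(2k-1)}$, I would borrow the upper bounds principle of Soundararajan and its refinement by Harper: under GRH,
\begin{align*}
\log |L(\tfrac12, \operatorname{sym}^2 f)| \leq \Re \sum_j \mathcal{P}_j(f) + O(1),
\end{align*}
exponentiating which gives a pointwise majorization by a short Dirichlet polynomial essentially equal to $\mathcal{N}(f,1)$. This replaces the forbidden negative moment by a usable positive one, and the rest of the argument proceeds as above. The main obstacle is the accurate evaluation of the twisted first moment in step (i): one must carefully track the contribution of the Hecke convolution coefficients of $|\mathcal{N}|^2$ through the approximate functional equation and the Petersson formula, and verify that the combinatorics of truncated exponentials of $\mathcal{P}_j$ reproduce precisely the exponent $k(2k+1)$ in the conjecture \eqref{Lfcnsk}; this is also the step where the level-one $\operatorname{sym}^2$ setting introduces minor twists (through the factor $L(1,\operatorname{sym}^2 f)$ in the harmonic weight \eqref{omegaf}) that need to be accounted for when identifying the main term.
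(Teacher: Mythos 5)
Your overall strategy --- the Heap--Soundararajan lower bounds principle with mollifiers built from truncated exponentials of the prime sums $\mathcal{P}_j(f)$, evaluated via the Petersson formula and the known twisted first moment, with the Soundararajan--Harper GRH machinery brought in for $0\le k<1/2$ --- is indeed the paper's strategy. However, there is a concrete error in the set-up of the twisted first moment that destroys the sharpness of the final exponent. The central values $L(\frac 12, \operatorname{sym}^2 f)$ are real, so the correct analogue of the unitary pairing $N(k-1)\overline{N(k)}$ is the single factor $\mathcal{N}(f,2k-1)\approx L(\frac 12, \operatorname{sym}^2 f)^{2k-1}$, which is what the paper uses, so that $L\cdot\mathcal{N}(f,2k-1)\approx L^{2k}$. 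Your choice $|\mathcal{N}(f,k-1)|^2=\mathcal{N}(f,k-1)^2\approx L^{2k-2}$ gives $L\cdot|\mathcal{N}(f,k-1)|^2\approx L^{2k-1}$, one power of $L$ short. The defect is already visible at $k=1$: there $\mathcal{N}(f,0)\equiv 1$, the left-hand side of your H\"older inequality is just the first moment $\asymp\log\kappa$, and you obtain only $\mathop{{\sum}^{h}}_{f\in H_{\kappa}}|L(\frac 12, \operatorname{sym}^2 f)|^{2}\gg(\log\kappa)^2$ rather than the required $(\log\kappa)^3$. Relatedly, your exponent bookkeeping is internally inconsistent: taking $X=(\log\kappa)^{k(2k+1)(2k-1)/(2k)}$ in step (i) and $Y=(\log\kappa)^{4k(k-1)^2/(2k-1)+O(1)}$ in step (ii), H\"older yields the lower bound $X^{2k}Y^{-(2k-1)}$, whose exponent equals $k(2k+1)$ only if $2k(2k+1)(k-1)=4k(k-1)^2$, i.e.\ $2k+1=2(k-1)$, a contradiction; and an unspecified $O(1)$ in the exponent of $Y$ is in any case fatal when a sharp power of $\log\kappa$ is the goal. (Compare with Propositions \ref{Prop4} and \ref{Prop5}, whose exponents $\frac{(2k)^2+1}{2}$ and $\frac{(2k)^2}{2}$ do combine to $k(2k+1)$.)

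For $0\le k<1/2$ your sketch also misses the key object. The obstruction caused by the negative H\"older exponent is not removed by a pointwise majorization of $|L|$; the paper instead applies a three-factor H\"older inequality whose middle factor is the twisted second moment $\mathop{{\sum}^{h}}_{f\in H_{\kappa}}|L(\frac 12, \operatorname{sym}^2 f)|^2\mathcal{N}(f,2k-2)$, and GRH enters precisely to bound this quantity by $(\log\kappa)^{2k^2+1}$ via a Harper-type decomposition into the sets $\mathcal{S}(j)$, $\mathcal{P}(m)$, $\mathcal{T}$ --- this is Proposition \ref{Prop6}, the technical bulk of the paper, and it has no counterpart in your outline. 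To repair the argument you would need to (a) replace $|\mathcal{N}(f,k-1)|^2$ by $\mathcal{N}(f,2k-1)$ and redo the exponent count against the symplectic-type moment heuristics, and (b) for $k<1/2$, identify the twisted second moment as the object to be bounded under GRH.
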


    We achieve the proof of Theorem \ref{thmlowerbound} by applying the lower bounds principle of W. Heap and K. Soundararajan \cite{H&Sound}. We point out here that the case $k=0$ of \eqref{lowerbound} in fact yields a positive portion of non-vanishing result on central values of the family of symmetric square $L$-functions under GRH. However, a superior result with an explicit proportion has already been achieved by H. Iwaniec, W. Luo and P. Sarnak in \cite[Corollary 1.8]{ILS}, by computing the one level density of low-lying zeros of the family of symmetric square $L$-functions.

  Also as indicated above, the method of K. Soundararajan \cite{Sound2009} with its refinement by A. J. Harper \cite{Harper} on upper bounds for moments of $L$-functions under GRH plays a key role in the proof of Theorem \ref{thmlowerbound}. The same method further allows us to obtain the next result concerning the upper bounds of the corresponding $L$-functions.
\begin{theorem}
\label{thmupperbound}
    With the notation as above and assuming the truth of GRH. For any real number $k \geq 0$, we have
\begin{align*}
   \mathop{{\sum}^{h}}\limits_{f\in H_{\kappa}}|L(\frac 12, \operatorname{sym}^2 f)|^{2k}  \ll_k (\log \kappa)^{k(2k+1)}.
\end{align*}
\end{theorem}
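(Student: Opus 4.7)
The plan is to adapt the Soundararajan--Harper method for upper bounds on moments of $L$-functions under GRH to the symplectic family $\{L(s,\operatorname{sym}^2 f)\}_{f \in H_\kappa}$. With $m = 2k$ the predicted exponent $k(2k+1) = m(m+1)/2$ is the signature of symplectic symmetry, and the technology of \cite{Sound2009} refined by \cite{Harper} is designed precisely to produce such exponents. The first step is to invoke Soundararajan's pointwise inequality: under GRH for $L(s,\operatorname{sym}^2 f)$, for $2 \leq x$ and a real parameter $\lambda \geq \lambda_0$, one has
\[
\log|L(\tfrac{1}{2},\operatorname{sym}^2 f)| \;\leq\; \operatorname{Re}\sum_{p \leq x} \frac{\lambda_f(p^2)}{p^{1/2+\lambda/\log x}}\frac{\log(x/p)}{\log x} + \frac{1}{2}\sum_{p \leq \sqrt{x}} \frac{b_f(p)}{p^{1+2\lambda/\log x}}\frac{\log(x/p^2)}{\log x} + \frac{(1+\lambda)\log \kappa^2}{\log x} + O(1),
\]
where $\lambda_f(p^2)=\lambda_f(p)^2-1$ is the $p$-th Dirichlet coefficient of $L(s,\operatorname{sym}^2 f)$ and $b_f(p)$ is a bounded coefficient arising from the $p^2$-part of the logarithmic expansion. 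The $p^2$-sum contributes a deterministic drift of size $\tfrac12\log\log\kappa$ on average, producing the mean shift that distinguishes symplectic from unitary moments.

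Next I would execute Harper's dyadic refinement: partition $[2,x]$ into blocks $I_j = (y_{j-1}, y_j]$ with $y_j = \exp((\log\kappa)^{\theta_j})$ for a decreasing sequence $\theta_j$, and set
\[
\mathcal{P}_j(f) \;=\; \operatorname{Re}\sum_{p \in I_j} \frac{\lambda_f(p^2)}{p^{1/2+\lambda/\log x}}\frac{\log(x/p)}{\log x}.
\]
Depending on whether $|\mathcal{P}_j(f)|$ is small or large, I would estimate $\exp(2k\mathcal{P}_j(f))$ either by a Taylor polynomial $\mathcal{N}_j(f)$ of degree $\asymp \ell_j$ (with $\ell_j$ calibrated so that $\ell_j\log y_j \ll \log \kappa$), or place $f$ in an exceptional set whose contribution is absorbed via Chebyshev's inequality and bounds on $\mathop{{\sum}^{h}}_{f \in H_\kappa}|\mathcal{P}_j(f)|^{2\ell_j}$.

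The arithmetic heart of the proof is then the moment evaluation
\[
\mathop{{\sum}^{h}}\limits_{f \in H_\kappa} |\mathcal{P}_j(f)|^{2\ell_j} \;\ll\; \frac{(2\ell_j)!}{2^{\ell_j}\ell_j!}\,V_j^{\ell_j}, \qquad V_j \;\asymp\; \sum_{p \in I_j}\frac{1}{p},
\]
obtained by expanding the $2\ell_j$-th power, applying Petersson's trace formula to harmonic sums of Hecke eigenvalues, and using the Hecke relation $\lambda_f(p^2)^2 = \lambda_f(p^4) + \lambda_f(p^2) + 1$ together with the delta-plus-Kloosterman expression $\mathop{{\sum}^{h}}_{f \in H_\kappa}\lambda_f(n) = \delta_{n,1} + O(n^{1/2}/\kappa^{5/3})$. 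Multiplying the block estimates, assembling the Taylor truncations, and balancing with the $(1+\lambda)\log\kappa/\log x$ shift from Soundararajan's inequality yields the target upper bound $\ll (\log\kappa)^{k(2k+1)}$.

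The chief obstacle is the compatibility between the length of the Dirichlet polynomials and the off-diagonal Kloosterman error in Petersson's formula: to apply the trace formula cleanly to a polynomial supported on integers up to $y^{\ell}$ one needs $y^{\ell}$ strictly smaller than $\kappa^{2-\varepsilon}$, which caps $\ell_j\log y_j$ and thereby forces a specific choice of $x$ (essentially $\log x \asymp \log\kappa/\log\log\kappa$ on the critical scale). It is precisely Harper's fine dyadic decomposition that allows all scales $y_j$ to coexist while keeping the final error negligible. Once this calibration is carried out, the bound is uniform in real $k \geq 0$, since both the truncation degrees $\ell_j$ and the exponential drift $\exp(2k\cdot\tfrac12\log\log\kappa)$ depend continuously on $k$, and the endpoint case $k=0$ is trivial.
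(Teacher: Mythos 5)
Your outline is the Soundararajan--Harper strategy that the paper itself uses (it derives Theorem \ref{thmupperbound} as a variant of the proof of Proposition \ref{Prop6}): the GRH pointwise inequality of Lemma \ref{lem: logLbound}, Harper's block decomposition of the primes, Taylor truncation of $\exp(2k\mathcal{P}_j(f))$ on the typical set versus Chebyshev-type bounds on the exceptional sets, and moment evaluations of the resulting Dirichlet polynomials via the Petersson formula $\Delta_{m,n}=\delta_{m,n}+O(e^{-\kappa})$ subject to the length constraint $mn\ll\kappa^2$. All of that is right, including the identification of the length/off-diagonal tension as the binding constraint.

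The one place where your sketch is too casual to count as a proof of the \emph{sharp} exponent is the prime-square sum. Writing its coefficient as $b_f(p)=\alpha_f^4(p,1)+\alpha_f^4(p,2)+1$, only the constant $1$ produces the deterministic $\tfrac12\log\log x$ shift; the fluctuating part $\alpha_f^4(p,1)+\alpha_f^4(p,2)$ is bounded by $2$ but is \emph{not} negligible if estimated trivially -- doing so inflates the drift to $\tfrac32\log\log x$ and the final bound to $(\log\kappa)^{k(2k+3)}$. Saying it is $\tfrac12\log\log\kappa$ ``on average'' does not suffice in a pointwise large-deviation argument: you must control
\begin{align*}
\sum_{p\le y}\frac{\alpha_f^4(p,1)+\alpha_f^4(p,2)}{p}
\end{align*}
for (almost) every $f$. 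The paper does this in two steps: for $p$ beyond $(\log\kappa)^6$ it invokes GRH for the symmetric and exterior squares of $\pi_{\operatorname{sym}^2 f}$ to prove $\sum_{p\le x}(\alpha_f^4(p,1)+\alpha_f^4(p,2))\log p=O(x^{1/2}\log x\log(\kappa x))$ (equation \eqref{lambdaprelationfexplicit}), so that range contributes $O(1)$ by partial summation; for $p\le\log\kappa$ it introduces the dyadic polynomials $P_m(f)$ and the exceptional sets $\mathcal{P}(m)$, whose harmonic measure is bounded via the Hecke relation $\alpha_f^4(p,1)+\alpha_f^4(p,2)=\lambda_f(p^4)-\lambda_f(p^2)$ and the trace formula (estimates \eqref{Pexpression}--\eqref{Pmest}). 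Your proposal needs this extra layer -- both the additional GRH hypothesis for the degree-$6$ factor and the additional family of exceptional sets -- to land on $k(2k+1)$ rather than $k(2k+3)$.
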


  Now, Theorem \ref{thmlowerbound} and Theorem \ref{thmupperbound} together implies the following result on the order of magnitude of the moments of the family of symmetric square $L$-functions.
\begin{theorem}
\label{thmorderofmag}
  With the notation as above and assuming the truth of GRH. For any real number $k \geq 0$, we have
\begin{align*}
   \mathop{{\sum}^{h}}\limits_{f\in H_{\kappa}}|L(\frac 12, \operatorname{sym}^2 f)|^{2k}  \asymp  (\log \kappa)^{k(2k+1)}.
\end{align*}
\end{theorem}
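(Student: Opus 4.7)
The plan is the immediate one: Theorem \ref{thmorderofmag} follows formally from Theorem \ref{thmlowerbound} and Theorem \ref{thmupperbound}, so no further analytic input is needed beyond those two statements. The key observation is that, under GRH, the ranges of validity of the two bounds coincide over all real $k \geq 0$, so concatenation loses nothing.

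More precisely, fix a real number $k \geq 0$. Under GRH, Theorem \ref{thmlowerbound} delivers
\[
 \mathop{{\sum}^{h}}\limits_{f\in H_{\kappa}}|L(\tfrac 12, \operatorname{sym}^2 f)|^{2k} \gg_k (\log \kappa)^{k(2k+1)},
\]
where the subrange $k \geq 1/2$ is in fact unconditional and only $0 \leq k < 1/2$ consumes GRH. Theorem \ref{thmupperbound}, also on GRH, furnishes the matching bound
\[
 \mathop{{\sum}^{h}}\limits_{f\in H_{\kappa}}|L(\tfrac 12, \operatorname{sym}^2 f)|^{2k} \ll_k (\log \kappa)^{k(2k+1)}
\]
for every $k \geq 0$. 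Combining the two inequalities yields the two-sided estimate $\asymp (\log \kappa)^{k(2k+1)}$ claimed in Theorem \ref{thmorderofmag}. The endpoint $k = 0$ is harmless, since both sides reduce to the harmonic count $\mathop{{\sum}^{h}}_{f \in H_\kappa} 1$, which is bounded above and below by positive constants as $\kappa \to \infty$, in agreement with $(\log \kappa)^{0} = 1$.

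I do not foresee any obstacle in this packaging step; the substantive work is borne entirely by Theorem \ref{thmlowerbound} and Theorem \ref{thmupperbound}. Those in turn rest on mollifier constructions built from truncated Euler products of the symmetric square $L$-functions, deployed via the Heap--Soundararajan lower bounds principle on one side and the Soundararajan--Harper upper bounds technique on the other. Once those two theorems are in hand, the order of magnitude asserted by Theorem \ref{thmorderofmag} is a tautological consequence.
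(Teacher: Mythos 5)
Your proposal is correct and is exactly the paper's own argument: Theorem \ref{thmorderofmag} is stated as the immediate conjunction of the lower bound of Theorem \ref{thmlowerbound} (using GRH for $0 \le k < 1/2$) and the GRH upper bound of Theorem \ref{thmupperbound}. No further comment is needed.
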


  As the proof of Theorem \ref{thmupperbound} can be achieved via a variant of the proof of Proposition \ref{Prop6} below, which is part of our treatments of Theorem \ref{thmlowerbound},  we shall omit the proof of Theorem \ref{thmupperbound} and focus on the proof of Theorem \ref{thmlowerbound} in the rest of the paper.

\section{Preliminaries}
\label{sec 2}

\subsection{Sums over primes}
 We reserve the letter $p$ for a prime number throughout the paper and we recall the following result from parts (d) and (b) in \cite[Theorem 2.7]{MVa} concerning sums over primes.
\begin{lemma}
\label{RS} Let $x \geq 2$. We have for some constant $b$,
\begin{align*}
\sum_{p\le x} \frac{1}{p} = \log \log x + b+ O\Big(\frac{1}{\log x}\Big).
\end{align*}
 Moreover, we have
\begin{align*}
\sum_{p\le x} \frac {\log p}{p} = \log x + O(1).
\end{align*}
\end{lemma}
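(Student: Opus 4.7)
The plan is to obtain the second estimate first, via Mertens' classical argument, and then derive the first by partial summation. The only substantive arithmetic input is Chebyshev's bound $\theta(x) := \sum_{p \leq x} \log p \ll x$, which I would take as known (it follows cleanly from the $2$-adic valuation of $\binom{2n}{n}$).

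For the weighted sum $\sum_{p \leq x}\log p/p$, I would start from Legendre's formula $v_p(n!) = \sum_{k \geq 1} \lfloor n/p^k \rfloor$, which upon taking logarithms gives
\begin{align*}
\log \lfloor x \rfloor! = \sum_{p^k \leq x} \lfloor x/p^k \rfloor \log p,
\end{align*}
and combine this with Stirling's asymptotic $\log \lfloor x \rfloor! = x \log x - x + O(\log x)$. The contribution of the prime-power terms with $k \geq 2$ is $O(x)$, because $\sum_{p}\sum_{k \geq 2} p^{-k}\log p$ converges. For $k=1$, replacing $\lfloor x/p \rfloor$ by $x/p$ introduces an error of size $O(\theta(x)) = O(x)$ by Chebyshev. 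Dividing through by $x$ delivers the second estimate.

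For the first formula, set $S(t) = \sum_{p \leq t} \log p/p = \log t + E(t)$ with $E(t) = O(1)$. Abel summation yields
\begin{align*}
\sum_{p \leq x} \frac{1}{p} = \frac{S(x)}{\log x} + \int_2^x \frac{S(t)}{t \log^2 t}\, dt.
\end{align*}
The boundary term is $1 + O(1/\log x)$. The integral of the main $\log t$ part is $\log \log x - \log \log 2$, and the remaining piece $\int_2^x E(t)/(t\log^2 t)\,dt$ converges absolutely as $x \to \infty$ (since $E$ is bounded and $\int_2^\infty dt/(t\log^2 t) < \infty$), differing from its limit by $O(1/\log x)$. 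Collecting the constants produces $\log \log x + b + O(1/\log x)$ with an explicit $b = 1 - \log \log 2 + \int_2^\infty E(t)/(t\log^2 t)\,dt$.

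The only genuine obstacle is Chebyshev's bound $\theta(x) \ll x$; granted this, the rest is bookkeeping with Stirling's formula and partial summation. No appeal to the prime number theorem, Dirichlet series, or complex-analytic methods is needed, which is why a textbook such as Montgomery--Vaughan places this result very early.
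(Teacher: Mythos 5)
Your proof is correct: the Chebyshev--Legendre--Stirling argument for $\sum_{p\le x}\log p/p=\log x+O(1)$ followed by partial summation is exactly the classical Mertens proof. The paper does not prove this lemma but simply cites \cite[Theorem 2.7]{MVa}, and your argument is precisely the one given in that reference, so there is nothing to add.
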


\subsection{Symmetric square $L$-functions}

  We recall here some general results concerning Rankin-Selberg $L$-functions. Let $L(s, \pi)$ be the $L$-function attached to an automorphic cuspidal representation $\pi$ of $\text{GL}_{M}$ over $\mq$. For $\Re(s)$ large enough, $L(s, \pi)$ can be expressed as an Euler product of the form
\begin{align*}
 L(s, \pi)=\prod_pL(s, \pi_p)=\prod_p\prod^M_{j=1}(1-\alpha_{\pi}(p, j)p^{-s})^{-1}.
\end{align*}

    The Rankin-Selberg $L$-function $L(s, \pi \otimes \pi)$ associated to $\pi$ is then defined for $\Re(s)$ large enough as a product of local factors $L(s, \pi \otimes \pi)=\prod\limits_{p}L(s, \pi_p \otimes \pi_p)$ such that (see \cite[(2.18)]{R&S}) at unramified $\pi_p$, we have
\begin{align*}
 L(s, \pi_p \otimes \pi_p)=\prod_{1 \leq i, j \leq M}(1-\alpha_{\pi}(p, i)\alpha_{\pi}(p, j)p^{-s})^{-1}.
\end{align*}

  Furthermore, $L(s, \pi \otimes \pi)$ factors as the product of the symmetric and exterior square $L$-functions (see \cite[p. 139]{BG}):
\begin{align*}
 L(s, \pi \otimes \pi)=L(s, \vee^2)L(s, \wedge^2),
\end{align*}
   where we have
\begin{align}
\label{LEulerprod}
  L(s, \vee^2) = \prod_p L_p(s, \vee^2), \quad L(s, \wedge^2)= \prod_p L_p(s, \wedge^2).
\end{align}

  The Euler product for $L(s, \vee^2)$ (and hence $L(s, \wedge^2)$) can be found on \cite[p. 167]{BG}. For unramified $p$, we have
\begin{align}
\label{Lpexp}
  L_p(s, \vee^2) = \prod_{1 \leq i \leq j \leq M}(1-\alpha_{\pi}(p, i)\alpha_{\pi}(p, j)p^{-s})^{-1}, \quad L_p(s, \wedge^2)= \prod_{1 \leq i < j \leq M}(1-\alpha_{\pi}(p, i)\alpha_{\pi}(p, j)p^{-s})^{-1}.
\end{align}

   It is known that $L(s, \pi \otimes \pi)$ has a simply pole at $s = 1$ if and only if $\pi$ is self-contragredient. In which case, the pole of $L(s, \pi \otimes \pi)$ is carried by exactly one of the two factors $L(s, \vee^2)$ or $L(s, \wedge^2)$. We then write the order of the pole of $L(s, \wedge^2)$ as $(\delta(\pi) + 1)/2$  so that $\delta(\pi) = \pm 1$. The order of the pole of $L(s, \vee^2)$ then equals to $(1-\delta(\pi) )/2$.

  Upon taking logarithmic derivatives on both sides of the expressions given in \eqref{LEulerprod} via making use of \eqref{Lpexp}, we deduce from  \cite[Theorem 5.15]{iwakow} that under GRH, we have
\begin{align}
\label{lambdaprelation}
\begin{split}
   \sum_{p \leq x} \sum_{1 \leq i \leq j \leq M}\alpha_{\pi}(p, i)\alpha_{\pi}(p, j)\log p=& \frac {1-\delta(\pi)}{2}x+O(x^{1/2}\log x \log (q_{\operatorname{sym}^2 (\pi)}x)), \\
  \sum_{p \leq x} \sum_{1 \leq i < j \leq M}\alpha_{\pi}(p, i)\alpha_{\pi}(p, j)\log p=& \frac {1+\delta(\pi)}{2}x+O(x^{1/2}\log x \log (q_{\operatorname{ext}^2 (\pi)}x)),
\end{split}
\end{align}
  where $q_{\operatorname{sym}^2 (\pi)}, q_{\operatorname{ext}^2 (\pi)}$ are analytic conductors of $L(s, \vee^2)$ and $L(s, \wedge^2)$ defined on \cite[p. 95]{iwakow}.  Here the implied constants are absolute.

\subsection{Cusp form $L$-functions}
\label{sec:cusp form}

  Recall that the Fourier expansion of any $f \in H_{\kappa}$ at infinity is given in \eqref{fFourier}. For $\Re(s) > 1$, the associated modular $L$-function $L(s, f)$ is defined by
\begin{align}
\label{Lfdef}
L(s, f) &= \sum_{n=1}^{\infty} \frac{\lambda_f (n)}{n^s}
 = \prod_{p} \prod^2_{j =1} (1-\alpha_{f}(p,j)p^{-s})^{-1}.
\end{align}

  By Deligne's proof \cite{D} of the Weil conjecture, we know that
\begin{align}
\label{alpha}
|\alpha_{f}(p,1)|=|\alpha_{f}(p,2)|=1, \quad \alpha_{f}(p,1)\alpha_{f}(p,2)=1.
\end{align}

  It follows from  this and \eqref{Lfdef} that $\lambda_f(n)$ is multiplicative and for $\nu \geq 1$, we have
\begin{align}
\label{lambdafpnu}
 \lambda_{f}(p^{\nu})=\sum^{\nu}_{j=0}\alpha^{v-j}_f(p,1)\alpha^j_{f}(p,2).
\end{align}
  The above relation further implies that $\lambda_f (n) \in \mr$ satisfying $\lambda_f (1) =1$ and $|\lambda_f (n)|
\leq d(n)$ for $n \geq 1$ with $d(n)$ denotes the divisor function of $n$. Moreover, for integers $m, n \geq 1$, we have the following multiplicative property
\begin{align}
\label{lambdamult}
 \lambda_{f}(m)\lambda_f(n)=\sum_{d|(m,n)}\lambda_f(\frac {mn}{d^2}).
\end{align}

 The symmetric square $L$-function $L(s, \operatorname{sym}^2 f)$ of $f$ is then defined for $\Re(s)>1$ by
 (see \cite[p. 137]{iwakow} and \cite[(25.73)]{iwakow})
\begin{align}
\label{Lsymexp}
\begin{split}
 L(s, \operatorname{sym}^2 f)=& \prod_p\prod_{1 \leq i \leq j \leq 2}(1-\alpha_{f}(p, i)\alpha_{f}(p, j)p^{-s})^{-1}=(1-\alpha^2_{f}(p, 1)p^{-s})^{-1}(1-p^{-s})^{-1}(1-\alpha^2_{f}(p, 2)p^{-s})^{-1} \\
    =& \zeta(2s) \sum_{n \geq 1}\frac {\lambda(n^2)}{n^s}=\prod_{p}(1-\frac {\lambda(p^2)}{p^s}+\frac {\lambda(p^2)}{p^{2s}}-\frac {1}{p^{3s}} )^{-1}.
\end{split}
\end{align}

  It follows from a result of G. Shimura \cite{Shimura} that the corresponding completed $L$-function
\begin{align}
\label{Lambdafdef}
 \Lambda(s, \operatorname{sym}^2 f)=& \pi^{-3s/2}\Gamma (\frac {s+1}{2})\Gamma (\frac {s+\kappa-1}{2}) \Gamma (\frac {s+\kappa}{2}) L(s, \operatorname{sym}^2 f)
\end{align}
  is entire and satisfies the functional equation
$\Lambda(s, \operatorname{sym}^2 f)=\Lambda(1-s, \operatorname{sym}^2 f)$.

  Noticing that the weight $\omega_f$ introduced in \eqref{omegaf} satisfies that $\omega_f \neq 0$ so that we have $L(s, \operatorname{sym}^2 f)$ has no pole at $s=1$. Moreover, it follows from \eqref{Lambdafdef} and the definition of the analytic conductor given on \cite[p. 95]{iwakow} that the analytic conductor of $L(s, \operatorname{sym}^2 f)$ is $ \ll \kappa^2$. We then apply \eqref{lambdaprelation} to the representation whose associated $L$-function equals $L(s, f)$ to see by \eqref{Lsymexp} that
\begin{align}
\label{lambdaprelationf0}
\begin{split}
  \sum_{p \leq x} \big(\alpha^2_{f}(p, 1)+\alpha^2_{f}(p, 2)+1 \big )\log p=& O\big(x^{1/2}\log x \log (\kappa x)\big), \\
  \sum_{p \leq x}  \big(\alpha^2_{f}(p, 1)+\alpha^2_{f}(p, 2) \big )\log p=& -x+O\big(x^{1/2}\log x \log (\kappa x)\big).
\end{split}
\end{align}

  We now denote  $\pi_{\operatorname{sym}^2 f}$ for the representation whose associated $L$-function equals $L(s, \operatorname{sym}^2 f)$ and we notice that this representation is also self-contragredient. We also denote the symmetric square and exterior functions associated to $\pi_{\operatorname{sym}^2 f}$ by $L(s, \vee^2;\pi_{\operatorname{sym}^2 f})$ and $L(s, \wedge^2;\pi_{\operatorname{sym}^2 f})$, respectively. Further note that as $L(s, \operatorname{sym}^2 f)$ is an $L$-function of degree $3$, the analytic conductor of $L(s, \vee^2;\pi_{\operatorname{sym}^2 f})$ divides (see \cite[p. 97]{iwakow}) the sixth power of the analytic conductor of $L(s, \operatorname{sym}^2 f)$. We then apply \eqref{lambdaprelation} to $\pi_{\operatorname{sym}^2 f}$ to see that
\begin{align}
\label{lambdaprelationf}
\begin{split}
  \sum_{p \leq x} \big(\alpha^4_{f}(p, 1)+\alpha^4_{f}(p, 2)+1+\alpha^2_{f}(p, 1)+\alpha^2_{f}(p, 2)+1 \big )\log p=& \frac {1-\delta(\pi_{\operatorname{sym}^2 f})}{2}x+O\big(x^{1/2}\log x \log (\kappa x)\big), \\
  \sum_{p \leq x}  \big(\alpha^2_{f}(p, 1)+\alpha^2_{f}(p, 2)+1 \big )\log p=& \frac {1+\delta(\pi_{\operatorname{sym}^2 f})}{2}x+O\big(x^{1/2}\log x \log (\kappa x)\big).
\end{split}
\end{align}
   We then deduce from the second expression above and \eqref{lambdaprelationf0} that $\delta(\pi_{\operatorname{sym}^2 f})=-1$, which in term implies that the first expression in \eqref{lambdaprelationf} can be written as
\begin{align}
\label{sumsymext}
\begin{split}
  \sum_{p \leq x} \big(\alpha^4_{f}(p, 1)+\alpha^4_{f}(p, 2)+1 \big )\log p=& x+O\big(x^{1/2}\log x \log (\kappa x)\big).
\end{split}
\end{align}
   As the prime number theorem under the Riemann hypothesis implies that (see \cite[Theorem 13.1]{MVa1})
\begin{align*}
\begin{split}
  \sum_{p \leq x} \log p = x+O\big(x^{1/2}\log^2 x \big),
\end{split}
\end{align*}
  we readily deduce from \eqref{lambdaprelationf} and \eqref{sumsymext} that the first expression given in \eqref{lambdaprelationf} can be written as
\begin{align}
\label{lambdaprelationfexplicit}
\begin{split}
  \sum_{p \leq x} \big(\alpha^4_{f}(p, 1)+\alpha^4_{f}(p, 2) \big )\log p=& O\big(x^{1/2}\log x \log (\kappa x)\big).
\end{split}
\end{align}

\subsection{Twisted first moment and the Petersson trace formula}

   We quote the following result from \cite{Ng17} on the twisted first moment of $L(\frac 12, \operatorname{sym}^2 f)$ for $f \in H_{\kappa}$.
\begin{lemma}
\label{Twistedfirstmoment}
   For any $\varepsilon > 0, 1 \leq l \ll \kappa$, we have for some constant $C$,
\begin{align}
\label{lstapproxk}
  \mathop{{\sum}^{h}}\limits_{f\in H_{\kappa}} L(\frac 12, \operatorname{sym}^2 f) \lambda_f(l^2) =
  \frac {1}{\sqrt{l}}(-\log l+C+\log \kappa)+O(l\kappa^{-1/2+\varepsilon}+k^{-1}).
\end{align}
\end{lemma}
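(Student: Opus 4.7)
The plan is to combine an approximate functional equation for $L(1/2, \operatorname{sym}^2 f)$ with the Petersson trace formula. From the Euler product \eqref{Lsymexp} one has the Dirichlet expansion
\begin{align*}
L(s, \operatorname{sym}^2 f) = \sum_{n \geq 1} \frac{c_f(n)}{n^s}, \qquad c_f(n) = \sum_{ab^2 = n} \lambda_f(a^2),
\end{align*}
and since the completed $L$-function $\Lambda(s, \operatorname{sym}^2 f)$ from \eqref{Lambdafdef} is self-dual, a standard contour argument delivers the symmetric approximate functional equation
\begin{align*}
L(1/2, \operatorname{sym}^2 f) = 2 \sum_{n \geq 1} \frac{c_f(n)}{\sqrt n}\, V_\kappa(n),
\end{align*}
where $V_\kappa(y)$ is a smooth cutoff given by the inverse Mellin transform of $G(w)\,\Gamma_\infty(1/2+w)/\Gamma_\infty(1/2)$ against an auxiliary even entire function $G$ with $G(0) = 1$ and $\Gamma_\infty$ denoting the archimedean factor in \eqref{Lambdafdef}. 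The function $V_\kappa(y)$ is essentially $1$ for $y \ll \kappa^{2-\varepsilon}$ and decays faster than any polynomial for $y \gg \kappa^{2+\varepsilon}$.

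Multiplying by $\lambda_f(l^2)$ and applying the Hecke relation \eqref{lambdamult} in the form $\lambda_f(a^2)\lambda_f(l^2) = \sum_{d \mid (l^2, a^2)} \lambda_f(l^2 a^2 / d^2)$, I would then average with the harmonic weight and invoke Petersson's formula
\begin{align*}
\mathop{{\sum}^{h}}\limits_{f \in H_\kappa} \lambda_f(m) = \delta_{m, 1} + 2\pi i^{-\kappa} \sum_{c \geq 1} \frac{S(m, 1; c)}{c}\, J_{\kappa - 1}\Big(\frac{4\pi \sqrt m}{c}\Big).
\end{align*}
A short divisibility argument (write $g = (l, a)$, $l = g l'$, $a = g a'$ with $(l', a') = 1$) shows that $l^2 a^2/d^2 = 1$ forces $a = l$ and $d = l^2$, so the Kronecker diagonal contributes only
\begin{align*}
\frac{2}{\sqrt l} \sum_{b \geq 1} \frac{V_\kappa(l b^2)}{b} = \frac{1}{\sqrt l} \cdot \frac{1}{\pi i} \int_{(2)} G(w)\, \frac{\Gamma_\infty(1/2 + w)}{\Gamma_\infty(1/2)}\, l^{-w}\, \zeta(1 + 2w)\, \frac{dw}{w}.
\end{align*}
Shifting the contour past $w = 0$, the integrand has a double pole there; using $\zeta(1 + 2w) = 1/(2w) + \gamma + O(w)$ together with the Stirling estimate $\tfrac{d}{dw} \log \Gamma_\infty(1/2 + w)|_{w=0} = \log \kappa + O(1)$ (coming from the two large factors $\Gamma((w + \kappa - 1/2)/2)$ and $\Gamma((w + \kappa + 1/2)/2)$), the residue evaluates to $-\log l + C + \log \kappa$ for an explicit absolute constant $C$, which is exactly the claimed main term.

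The main obstacle is the off-diagonal contribution, in which the Kronecker delta in Petersson's formula is replaced by the Kloosterman--Bessel sum. After carrying out the sums over $a, b, d$ subject to $V_\kappa$ truncating $ab^2 \ll \kappa^{2+\varepsilon}$, one must bound
\begin{align*}
\sum_{a, b, d} \frac{V_\kappa(ab^2)}{b\sqrt a} \sum_{c \geq 1} \frac{S(l^2 a^2/d^2,\, 1;\, c)}{c}\, J_{\kappa - 1}\Big(\frac{4\pi la}{dc}\Big)
\end{align*}
by $O(l\, \kappa^{-1/2 + \varepsilon} + \kappa^{-1})$. The chief tools are Weil's bound $|S(m, 1; c)| \ll (m, c)^{1/2} c^{1/2 + \varepsilon}$, whose factor $\sqrt m = la/d$ is precisely the source of the linear dependence on $l$, together with the Bessel estimate $J_{\kappa - 1}(x) \ll \min\{(ex/(2\kappa))^{\kappa - 1},\, x^{-1/2}\}$; the super-exponential decay in the small-argument regime localises $c$ to a short range beyond which only the standard oscillatory bound is needed. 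Making these estimates uniform over $1 \leq l \ll \kappa$ is the technically most demanding step.
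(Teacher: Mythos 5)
The paper does not actually prove this lemma: it is quoted verbatim from \cite{Ng17}, so there is no internal argument to compare against. Your treatment of the diagonal is correct and matches the standard derivation: the expansion $c_f(n)=\sum_{ab^2=n}\lambda_f(a^2)$, the forcing of $a=l$ and $d=l^2$ in the Kronecker delta, and the double pole of $\zeta(1+2w)/w$ at $w=0$ producing $(-\log l+C+\log\kappa)/\sqrt{l}$ via $\tfrac{d}{dw}\log\Gamma_\infty(\tfrac12+w)\big|_{w=0}=\log\kappa+O(1)$ are all in order.

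The genuine gap is in the off-diagonal term, and it is not merely ``technically demanding'': the tools you name are quantitatively insufficient. The cutoff $V_\kappa$ permits $a$ up to $\kappa^{2+\varepsilon}$, so the Bessel argument $4\pi la/(dc)$ exceeds $\kappa$ for every $c\lesssim la/(d\kappa)$, and on that whole range only the oscillatory bound $J_{\kappa-1}(x)\ll x^{-1/2}$ is available (the super-exponential decay you invoke kills only the tail $c\gtrsim la/(d\kappa)$). Inserting Weil's bound and summing $c$ up to $la/(d\kappa)$ yields a contribution $\ll \kappa^{-1/2+\varepsilon}$ for each fixed $(a,b,d)$, and since $\sum_{ab^2\ll\kappa^{2+\varepsilon}}(b\sqrt{a})^{-1}\asymp\kappa^{1+\varepsilon}$, the total off-diagonal estimate obtained this way is of order $\kappa^{1/2+\varepsilon}$ --- larger than the main term $\log\kappa/\sqrt{l}$, let alone the claimed error $O(l\kappa^{-1/2+\varepsilon}+\kappa^{-1})$. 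To reach the stated error one must extract cancellation from the sum over $a$ (Voronoi/Poisson summation converting the Kloosterman sums into complete exponential or Gauss sums) combined with uniform asymptotics for $J_{\kappa-1}$ through its transition range $x\approx\kappa$; this is precisely the substance of \cite{Lau02}, \cite{Khan07} and \cite{Ng17}, and is why the present paper cites the result rather than reproving it.
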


   Note that the Petersson trace formula states that (see \cite[(2.7)]{I&M01})
\begin{align*}
\begin{split}
\Delta_{m,n}:=& \mathop{{\sum}^{h}}\limits_{f\in H_{\kappa}} \lambda_f(m)\lambda_f(n)=\delta_{m,n}+2\pi i^{\kappa}\sum_{c \geq 1}\frac {S(m,n;c)}{c}
  J_{\kappa-1}(\frac {4\pi\sqrt{mn}}{c}),
\end{split}
\end{align*}
  where $\delta_{m,n}= 1$ if $m = n$ and is $0$ otherwise, $J_{\kappa-1}(x)$ is the Bessel function and $S(m,n;c)$ is the Kloosterman's sum defined by
\begin{align*}
 S(m,n;c)=\sumstar_{h \shortmod c} e(\frac {mh+n\overline{h}}{c}),
\end{align*}
  where $\sumstar$ denotes the sum over invertible elements modulo $c$.

  We have the following result (see \cite[Lemma 2.1]{R&Sound}) concerning the size of $\Delta_{m,n}$.
\begin{lemma}
\label{LemDeltaest}
   For $mn \leq \kappa^2/10^4$, we have
\begin{align}
\label{Deltaest}
  \Delta_{m,n}=\delta_{m,n}+O(e^{-\kappa}).
\end{align}
\end{lemma}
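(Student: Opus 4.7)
The plan is to start directly from the Petersson trace formula, which reduces the claim to showing that
\begin{align*}
\left| \sum_{c \geq 1} \frac{S(m,n;c)}{c} J_{\kappa-1}\!\left(\frac{4\pi\sqrt{mn}}{c}\right) \right| \ll e^{-\kappa}
\end{align*}
whenever $mn \leq \kappa^2/10^4$. The crucial input is the classical small-argument bound for the Bessel function, namely $|J_{\nu}(x)| \leq (x/2)^{\nu}/\Gamma(\nu+1)$ valid for $\nu \geq -1/2$ and $x > 0$. Apply this with $\nu = \kappa-1$ and $x = 4\pi\sqrt{mn}/c$ to obtain
\begin{align*}
\left| J_{\kappa-1}\!\left(\frac{4\pi\sqrt{mn}}{c}\right) \right| \leq \frac{1}{\Gamma(\kappa)} \left( \frac{2\pi\sqrt{mn}}{c} \right)^{\kappa-1}.
\end{align*}

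Next I would insert this estimate together with the trivial bound $|S(m,n;c)| \leq c$ (Weil's bound would be more than we need here) into the sum over $c$. This yields
\begin{align*}
\left| \sum_{c \geq 1} \frac{S(m,n;c)}{c} J_{\kappa-1}\!\left(\frac{4\pi\sqrt{mn}}{c}\right) \right| \leq \frac{(2\pi\sqrt{mn})^{\kappa-1}}{\Gamma(\kappa)} \sum_{c \geq 1} \frac{1}{c^{\kappa-1}}.
\end{align*}
For $\kappa \geq 3$ the remaining sum over $c$ is bounded by $\zeta(2) = \pi^2/6$, so the entire quantity is controlled by a constant times $(2\pi\sqrt{mn})^{\kappa-1}/\Gamma(\kappa)$.

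Finally, I would invoke Stirling's formula in the form $\Gamma(\kappa) \gg \sqrt{\kappa}\,(\kappa/e)^{\kappa-1}$ together with the hypothesis $\sqrt{mn} \leq \kappa/100$ to conclude
\begin{align*}
\frac{(2\pi\sqrt{mn})^{\kappa-1}}{\Gamma(\kappa)} \ll \left( \frac{2\pi e \sqrt{mn}}{\kappa} \right)^{\kappa-1} \leq \left( \frac{\pi e}{50} \right)^{\kappa-1}.
\end{align*}
Since $\pi e/50 < 1/e$, this last quantity is $O(e^{-\kappa})$, giving the desired estimate. There is no real obstacle in this argument; the entire proof rests on exploiting the exponential-in-$\kappa$ decay of $J_{\kappa-1}(x)$ when its argument $x$ is of order strictly less than $\kappa$, and the condition $mn \leq \kappa^2/10^4$ guarantees exactly the gap between argument and order needed for this decay to win against both the Kloosterman sum and the divisor sum over $c$.
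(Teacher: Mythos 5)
Your argument is correct and is essentially the standard one: the paper does not prove this lemma itself but quotes it from \cite[Lemma 2.1]{R&Sound}, and the proof there proceeds exactly as you do, via the Petersson formula, the Poisson-integral bound $|J_{\kappa-1}(x)|\le (x/2)^{\kappa-1}/\Gamma(\kappa)$, the trivial bound on Kloosterman sums, and Stirling. All your numerical checks go through (the argument $4\pi\sqrt{mn}/c\le \pi\kappa/25$ is well below the order $\kappa-1$, and $\pi e/50<e^{-1}$), so the proposal is complete.
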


\subsection{Upper bound for $\log |L(\frac 12, \operatorname{sym}^2 f)|$ }

  Our next result establishes an upper bound of $\log |L(\frac 12, \operatorname{sym}^2 f)|$ in terms of a sum involving prime powers, following the approach in the proof of \cite[Proposition]{Sound2009}.
\begin{lemma}
\label{lem: logLbound}
With the notation as above and assuming the truth of GRH for $L(s, \operatorname{sym}^2 f)$. Let $x \geq 2$ and let $\lambda_0=0.4912\ldots$
denote the unique positive real number satisfying $e^{-\lambda_0} = \lambda_0+\frac {\lam^2_0}{2}$.
We have for any $\lambda \geq \lam_0$,
\begin{align}
\label{logLupperbound}
\log |L(\frac 12, \operatorname{sym}^2 f)| \le   \sum_{\substack {p^l \leq x \\  l \geq 1}} \frac {\alpha^{2l}_{f}(p, 1)+\alpha^{2l}_{f}(p, 2)+1}{lp^{l(\frac12+ \frac{\lam}{\log x})}} \frac{\log \leg {x}{p^l}}{\log x}
 + (1+\lam)\frac{ \log \kappa}{\log x}+ O\Big( \frac{\lambda}{\log x}\Big).
\end{align}
\end{lemma}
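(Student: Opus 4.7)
The proof adapts Soundararajan's approach from \cite{Sound2009} to the symmetric square $L$-function. Set $s_0 = \tfrac{1}{2} + \lambda/\log x$ and denote by $G_f(s) = \pi^{-3s/2}\Gamma(\tfrac{s+1}{2})\Gamma(\tfrac{s+\kappa-1}{2})\Gamma(\tfrac{s+\kappa}{2})$ the archimedean factor in the completed $L$-function $\Lambda(s, \operatorname{sym}^2 f) = G_f(s)L(s,\operatorname{sym}^2 f)$ from \eqref{Lambdafdef}. The plan is to first reduce $\log|L(\tfrac12,\operatorname{sym}^2 f)|$ to $\Re \log L(s_0,\operatorname{sym}^2 f)$, and then bound the latter by the truncated Dirichlet polynomial appearing on the right-hand side of \eqref{logLupperbound}.

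For the first reduction, the Hadamard factorization of the entire function $\Lambda(s,\operatorname{sym}^2 f)$ together with the self-dual functional equation $\Lambda(s,\operatorname{sym}^2 f)=\Lambda(1-s,\operatorname{sym}^2 f)$ yields, for real $u > \tfrac12$, the identity $\tfrac{L'}{L}(u,\operatorname{sym}^2 f) = \sum_\rho \Re\tfrac{1}{u-\rho} - \tfrac{G_f'}{G_f}(u)$, with the Hadamard constant and the tail $\sum_\rho 1/\rho$ cancelling. Integrating over $u\in[\tfrac12,s_0]$ and rearranging gives
\begin{align*}
\log|L(\tfrac12,\operatorname{sym}^2 f)| = \Re\log L(s_0,\operatorname{sym}^2 f) + \int_{1/2}^{s_0}\frac{G_f'}{G_f}(u)\,du - \sum_\rho \log\Big|\frac{s_0-\rho}{\tfrac12-\rho}\Big|.
\end{align*}
Under GRH each $\rho = \tfrac12 + i\gamma$, so $|s_0-\rho|\geq|\tfrac12-\rho|$ and the sum over zeros is non-negative; I would drop it to obtain an upper bound. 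By Stirling's formula $G_f'/G_f(u) = \log\kappa + O(1)$ uniformly for $u\in[\tfrac12,s_0]$, the dominant contribution coming from the two gamma factors with argument $\asymp\kappa/2$; the gamma integral therefore evaluates to $\lambda\log\kappa/\log x + O(\lambda/\log x)$.

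For the second reduction, I would express the target Dirichlet sum via a Mellin integral. From the Euler product \eqref{Lsymexp} one has, for $\Re s > 1$,
\begin{align*}
\log L(s, \operatorname{sym}^2 f) = \sum_{p,l \geq 1}\frac{\alpha_f^{2l}(p,1)+\alpha_f^{2l}(p,2)+1}{l\,p^{ls}},
\end{align*}
and combined with the Mellin identity $\frac{1}{2\pi i}\int_{(c)} y^w\,dw/w^2 = \max(\log y, 0)$ this yields, for any $c > 1/2$,
\begin{align*}
\sum_{p^l\leq x}\frac{\alpha_f^{2l}(p,1)+\alpha_f^{2l}(p,2)+1}{l\,p^{ls_0}}\frac{\log(x/p^l)}{\log x} = \frac{1}{\log x}\cdot\frac{1}{2\pi i}\int_{(c)}\log L(s_0+w,\operatorname{sym}^2 f)\frac{x^w}{w^2}\,dw.
\end{align*}
Shifting the contour leftwards across the double pole at $w = 0$ picks up the residue $\log L(s_0,\operatorname{sym}^2 f) + (L'/L)(s_0,\operatorname{sym}^2 f)/\log x$. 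One then estimates the shifted-contour integral and the term $(L'/L)(s_0,\operatorname{sym}^2 f)/\log x$, exploiting the factor $|x^w| = e^{-\lambda}$ on the line $\Re w = -\lambda/\log x$, the Hadamard expansion of $L'/L$, and standard convexity bounds for $|\log L|$ just to the right of the critical line. The hypothesis $\lambda \geq \lambda_0$, with $\lambda_0$ determined by the transcendental equation $e^{-\lambda_0} = \lambda_0 + \lambda_0^2/2$, enters exactly at this point: Soundararajan's elementary inequality, whose sharp threshold is encoded in the definition of $\lambda_0$, is used to absorb the residual zero-sum contribution into a conductor-type term of size $\log\kappa/\log x$. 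The outcome is
\begin{align*}
\Re\log L(s_0,\operatorname{sym}^2 f)\leq\sum_{p^l\leq x}\frac{\alpha_f^{2l}(p,1)+\alpha_f^{2l}(p,2)+1}{l\,p^{ls_0}}\frac{\log(x/p^l)}{\log x}+\frac{\log\kappa}{\log x}+O\Big(\frac{\lambda}{\log x}\Big).
\end{align*}

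Combining the two reductions yields \eqref{logLupperbound}: the coefficient $(1+\lambda)\log\kappa/\log x$ arises as the sum of $\lambda\log\kappa/\log x$ from the gamma integral in the first reduction and $\log\kappa/\log x$ from the Dirichlet approximation in the second. The chief obstacle is the second reduction, namely the delicate contour estimate across the branch singularities of $\log L$ and the invocation of Soundararajan's inequality at the precise threshold $\lambda_0$, both of which are the technical heart of the argument in \cite{Sound2009} that I would transport to the symmetric square setting.
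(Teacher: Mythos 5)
Your overall architecture (Hadamard factorization to move from $\tfrac12$ to $\sigma_0=\tfrac12+\lambda/\log x$, then a Perron-type integral with kernel $x^w/w^2$ to truncate at $x$) is the right one and matches the paper, but there is a genuine gap at the precise point you flag as "the technical heart": your first reduction throws away the very term that makes the threshold $\lambda_0$ work. In the paper, the zero sum $\int_{1/2}^{\sigma_0}F(\sigma)\,d\sigma=\tfrac12\sum_\rho\log\frac{(\sigma_0-1/2)^2+\gamma^2}{\gamma^2}$ is \emph{not} discarded; it is bounded below by $\tfrac12(\sigma_0-\tfrac12)F(\sigma_0)$ via $\log(1+t^2)\ge t^2/(1+t^2)$, where $F(\sigma_0)=\sum_\rho\Re\frac{1}{\sigma_0-\rho}\ge0$. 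This retained term $-\tfrac12(\sigma_0-\tfrac12)F(\sigma_0)$ is then combined with two contributions from the second step: the zero-sum remainder of the explicit formula, which contributes $+\frac{e^{-\lambda}}{\lambda\log x}F(\sigma_0)$, and the term $-\frac{1}{\log x}\frac{L'}{L}(\sigma_0,\operatorname{sym}^2 f)$, which by the Hadamard identity contributes $-\frac{1}{\log x}F(\sigma_0)$ (plus $\frac{\log\kappa}{\log x}$). The total coefficient of $F(\sigma_0)$ is $\frac{1}{\log x}\bigl(\frac{e^{-\lambda}}{\lambda}-1-\frac{\lambda}{2}\bigr)$, which is $\le 0$ exactly when $e^{-\lambda}\le\lambda+\frac{\lambda^2}{2}$, i.e.\ $\lambda\ge\lambda_0$. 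If you drop the zero sum in step one, you must absorb $\frac{e^{-\lambda}}{\lambda\log x}F(\sigma_0)$ using only $-\frac{1}{\log x}F(\sigma_0)$, which requires $e^{-\lambda}\le\lambda$, i.e.\ $\lambda\gtrsim0.567$; the lemma at $\lambda=\lambda_0=0.4912\ldots$ (the case actually used later, in \eqref{logLupperbounduptosquare}) is then out of reach. There is no separate "residual zero-sum absorption" in the second reduction that rescues this: the three $F(\sigma_0)$ pieces must be assembled together, so the first-step zero sum has to be carried along.

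A secondary issue: you apply the kernel $x^w/w^2$ to $\log L(s_0+w,\operatorname{sym}^2 f)$ and shift left. Under GRH the zeros sit on $\Re(s)=\tfrac12$, so $\log L(s_0+w)$ has branch points at $\Re(w)=-\lambda/\log x$, precisely the line you want to integrate on; the contour shift and the estimate of the shifted integral are then not the "standard" maneuver you describe. The paper (following Soundararajan) avoids this by applying the kernel to $-\frac{L'}{L}$, whose singularities are simple poles with computable residues $-\sum_\rho x^{\rho-s}/(\rho-s)^2$, and only afterwards integrating in $\sigma$ from $\sigma_0$ to $\infty$ to recover $\log|L(\sigma_0,\operatorname{sym}^2 f)|$. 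I would rework your second reduction along those lines and keep the zero sum from the first reduction in play until the final assembly.
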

\begin{proof}
 We interpret $\log |L(\frac 12, \operatorname{sym}^2 f)|$ as $-\infty$ when $L(\frac 12, \operatorname{sym}^2 f)=0$, so that we may assume $L(\frac 12, \operatorname{sym}^2 f) \neq 0$ in the remaining of the proof.  Recall that the function $\Lambda(s, \operatorname{sym}^2 f)$ defined in \eqref{Lambdafdef} is analytic in the entire complex plane. As $\Gamma(s)$ has simple poles at the non-positive rational integers (see \cite[\S 10]{Da}), we deduce from \eqref{Lambdafdef} that $L(s, \operatorname{sym}^2 f)$ has simple zeros at $s=-2k-1, -2k+1-\kappa, -2k-\kappa$ for all $k \geq 0$. These zeros are called the trivial zeros of $L(s, \operatorname{sym}^2 f)$. Since we assume GRH, we know that the non-trivial zeros of $L(s, \operatorname{sym}^2 f)$ are precisely the zeros of $\Lambda(s, \operatorname{sym}^2 f)$.

  Let $\rho=\tfrac 12+i\gamma$ with $\gamma \in \mr$ run over the non-trivial zeros of $L(s, \operatorname{sym}^2 f)$. We deduce from \cite[Theorem 5.6]{iwakow} and the observation that $L(s, \operatorname{sym}^2 f)$ is analytic at $s=1$ that
\begin{align}
\label{Lproductzeros}
  \Lambda(s, \operatorname{sym}^2 f)=e^{B_0 + B_1s}\prod_{\rho}\left(1 - \frac{s}{\rho}\right)e^{\frac{s}{\rho}},
\end{align}
where $B_0, B_1=B_1(f)$ are constants.

    Taking the logarithmic derivative on both sides of \eqref{Lproductzeros} and making use of \eqref{Lambdafdef}, we obtain that
\begin{equation*}
-\frac{L'}{L}(s, \operatorname{sym}^2 f)=\frac{1}{2} \frac{\Gamma'}{\Gamma}(\frac {s+1}{2}) +\frac{1}{2} \frac{\Gamma'}{\Gamma}(\frac {s+\kappa-1}{2}) +\frac{1}{2} \frac{\Gamma'}{\Gamma}(\frac {s+\kappa}{2})  - \frac{3}{2}\log\pi - B_1 - \sum_{\rho}\left(\frac{1}{s-\rho} + \frac{1}{\rho}\right).
\end{equation*}
    This implies that
\begin{align}
\label{Lderivreal}
-\Re{\frac{L'}{L}(s, \operatorname{sym}^2 f)}=\frac{1}{2} \Re{\frac{\Gamma'}{\Gamma}(\frac {s+1}{2})}+ \frac{1}{2} \Re{\frac{\Gamma'}{\Gamma}(\frac {s+\kappa-1}{2})}+ \frac{1}{2} \Re{\frac{\Gamma'}{\Gamma}(\frac {s+\kappa}{2})}- \frac{3}{2}\log\pi - \Re(B_1) - \sum_{\rho}\Re{\left(\frac{1}{s-\rho} + \frac{1}{\rho}\right)}.
\end{align}

    On the other hand, we note that by \cite[(5.29)]{iwakow},
\begin{equation*}
\Re(B_1)=-\sum_\rho\Re(\rho^{-1}).
\end{equation*}
   Combining the above with \eqref{Lderivreal}, we see that
\begin{align}
\label{LprimeLbound}
\begin{split}
-\Re{\frac{L'}{L}(s, \operatorname{sym}^2 f)} =& \frac{1}{2} \Re{\frac{\Gamma'}{\Gamma}(\frac {s+1}{2})}+ \frac{1}{2} \Re{\frac{\Gamma'}{\Gamma}(\frac {s+\kappa-1}{2})}+ \frac{1}{2} \Re{\frac{\Gamma'}{\Gamma}(\frac {s+\kappa}{2})}- \frac{3}{2}\log\pi - \Re{\sum_{\rho}\left(\frac{1}{s-\rho}\right)}\\
=& \frac{1}{2} \Re{\frac{\Gamma'}{\Gamma}(\frac {s+1}{2})}+ \frac{1}{2} \Re{\frac{\Gamma'}{\Gamma}(\frac {s+\kappa-1}{2})}+ \frac{1}{2} \Re{\frac{\Gamma'}{\Gamma}(\frac {s+\kappa}{2})}+O(1) - F(s),
\end{split}
\end{align}
  where
\begin{align*}
 F(s) = \Re{\sum_{\rho} \frac{1}{s-\rho}}= \sum_{\rho} \frac{\sigma-1/2}{(\sigma-1/2)^2+
 (t-\gamma)^2}.
\end{align*}

   Using the estimation $\frac{\Gamma'}{\Gamma}(s)=\log s+O(|s|^{-1})$ by (6) of \cite[\S 10]{Da}, we derive from \eqref{LprimeLbound} that for $s=\sigma \in \mr$ such that $|\sigma|$ is bounded,  we have
\begin{align}
\label{LprimeLbound1}
\begin{split}
-\Re{\frac{L'}{L}(\sigma, \operatorname{sym}^2 f)}=& \log \kappa +O(1) - F(\sigma).
\end{split}
\end{align}

  Integrating both sides of \eqref{LprimeLbound1} from $s=\half$ to $s=\sigma_0 > \frac 12$ implies that
\begin{align}
\label{Lprimediffbound}
\begin{split}
 \log |L(\frac 12,  \operatorname{sym}^2 f)| - \log |L(\sigma_0,  \operatorname{sym}^2 f)|
 =& \Big(\log \kappa +O(1)\Big) (\sigma_0-\half) -\int_{\half}^{\sigma_0} F(\sigma) d\sigma \\
 =&  (\sigma_0-\half) \Big (\log \kappa  +O(1) \Big)-\half \sum_{\rho} \log \frac {(\sigma_0-\half)^2+\gamma^2}{\gamma^2} \\
 \leq &  (\sigma_0-\half) \Big (\log \kappa +O(1)-\half F(\sigma_0) \Big),
\end{split}
\end{align}
  where the last estimation above follows from the observation that $\log (1+x^2) \geq x^2/(1+x^2)$.

  We further deduce from \eqref{Lsymexp} that for $\Re(s)>1$,
\begin{align*}
\begin{split}
 -\frac{L^{\prime}}{L}(s, \operatorname{sym}^2 f)
 =& \sum_{\substack {p^l \\ l \geq 1}} \frac {\log p (\alpha^{2l}_{f}(p, 1)+\alpha^{2l}_{f}(p, 2)+1)}{p^{ls}}.
\end{split}
\end{align*}

 Upon integrating term by term using the above expression for
 $-\frac{L^{\prime}}{L}(s+w, \operatorname{sym}^2 f)$, we obtain that
\begin{align}
\label{Lprimeexp1}
\begin{split}
 \frac{1}{2\pi i} \int\limits_{(c)} -\frac{L^{\prime}}{L}(s+w, \operatorname{sym}^2 f)
 \frac{x^w}{w^2} dw  = \sum_{\substack {p^l \leq x \\  l \geq 1}} \frac {\log p (\alpha^{2l}_{f}(p, 1)+\alpha^{2l}_{f}(p, 2)+1)}{p^{ls}} \log \leg {x}{p^l},
\end{split}
\end{align}
   where $c>2$ is a large number. Upon moving the line of integration in the above expression to the left and calculating residues, we deduce that
\begin{align}
\label{Lprimeexp2}
\begin{split}
& \frac{1}{2\pi i} \int\limits_{(c)} -\frac{L^{\prime}}{L}(s+w, \operatorname{sym}^2 f)
 \frac{x^w}{w^2} dw \\
=& -\frac{L^{\prime}}{L}(s, \operatorname{sym}^2 f) \log x - \Big(\frac{L^{\prime}}{L}(s, \operatorname{sym}^2 f)\Big)^{\prime}
 -\sum_{\rho} \frac{x^{\rho-s}}{(\rho-s)^2} -\sum_{k=0}^{\infty}
  \Big (\frac{x^{-2k-1-s}}{(2k+s+1)^2}+\frac{x^{-2k+1-\kappa-s}}{(2k+\kappa+s-1)^2}+\frac{x^{-2k-\kappa-s}}{(2k+\kappa+s)^2}\Big ).
 \end{split}
 \end{align}

  Comparing the two expressions given in \eqref{Lprimeexp1} and \eqref{Lprimeexp2}, we derive that for any $x \ge 2$,
 \begin{align}
\label{Lprimeseries}
\begin{split}
 -\frac{L^{\prime}}{L}(s, \operatorname{sym}^2 f)=& \sum_{\substack {p^l \leq x \\  l \geq 1}} \frac {\log p (\alpha^{2l}_{f}(p, 1)+\alpha^{2l}_{f}(p, 2)+1)}{p^{ls}} \frac{\log \leg {x}{p^l}}{\log x} + \frac{1}{\log x} \Big(\frac{L^{\prime}}{L}(s, \operatorname{sym}^2 f)\Big)^{\prime}
 + \frac{1}{\log x} \sum_{\rho} \frac{x^{\rho-s}}{(\rho-s)^2} \\
& + \frac{1}{\log x} \sum_{k=0}^{\infty} \Big (\frac{x^{-2k-1-s}}{(2k+s+1)^2}+\frac{x^{-2k+1-\kappa-s}}{(2k+\kappa+s-1)^2}+\frac{x^{-2k-\kappa-s}}{(2k+\kappa+s)^2}\Big ).
\end{split}
 \end{align}

 Upon integrating the real parts on both sides of \eqref{Lprimeseries} from $s=\sigma_0$ to $\infty$, we derive that for $x \geq 2$,
 \begin{align}
\label{logL}
\begin{split}
\log |L(\sigma_0, \operatorname{sym}^2 f)| = \Re \Big( \sum_{\substack {p^l \leq x \\  l \geq 1}} \frac {\alpha^{2l}_{f}(p, 1)+\alpha^{2l}_{f}(p, 2)+1}{lp^{l\sigma_0}} \frac{\log \leg {x}{p^l}}{\log x}
  &- \frac{1}{\log x} \frac{L^{\prime}}{L}(\sigma_0, \operatorname{sym}^2 f)\\
  & + \frac{1}{\log x} \sum_{\rho}
 \int_{\sigma_0}^{\infty} \frac{x^{\rho-s}}{(\rho-s)^2} d\sigma +O\Big(\frac{1}{\log x}\Big)\Big).
\end{split}
\end{align}
 Observe that
$$
\sum_{\rho}\Big|\int_{\sigma_0}^{\infty} \frac{x^{\rho -s}}{(\rho -s)^2} d\sigma\Big|
\le \sum_{\rho}\int_{\sigma_0}^{\infty}\frac{ x^{\frac 12-\sigma}}{|\sigma_0-\rho|^2} d\sigma
= \sum_{\rho}\frac{x^{\frac 12-\sigma_0}}{|\sigma_0-\rho|^2 \log x}= \frac{x^{\frac 12-\sigma_0}F(\sigma_0)}{(\sigma_0-\frac 12)\log x}.
$$
 Applying this and \eqref{LprimeLbound1} in \eqref{logL}, we see that
 \begin{align}
\label{logLbound}
\begin{split}
 \log |L(\sigma_0, \operatorname{sym}^2 f)|
 &\le  \Re \Big( \sum_{\substack {p^l \leq x \\  l \geq 1}} \frac {\alpha^{2l}_{f}(p, 1)+\alpha^{2l}_{f}(p, 2)+1}{lp^{l\sigma_0}} \frac{\log \leg {x}{p^l}}{\log x}
 +  \frac {\log \kappa}{ \log x}  \\
& \hskip .5 in+F(\sigma_0) \Big( \frac{x^{\frac 12-\sigma_0}}{(\sigma_0-\half) \log^2 x} -\frac{1}{\log x}
 \Big)
 + O\Big(\frac{1}{\log x}\Big)\Big).
\end{split}
 \end{align}

 Now, we add the two estimations given in \eqref{Lprimediffbound} and \eqref{logLbound} to see that
\begin{align*}
  \log |L(\frac 12, \operatorname{sym}^2 f)|
 \le &  \Re  \sum_{\substack {p^l \leq x \\  l \geq 1}} \frac {\alpha^{2l}_{f}(p, 1)+\alpha^{2l}_{f}(p, 2)+1}{lp^{l\sigma_0}} \frac{\log \leg {x}{p^l}}{\log x}
 +  \log \kappa  \Big(\sigma_0 -\half + \frac{1}{\log x} \Big )
 \\
 &\hskip .5 in +F(\sigma_0) \Big( \frac{x^{\frac 12-\sigma_0}}{(\sigma_0-\half) \log^2 x} -\frac{1}{\log x}-\half (\sigma_0-\half)
 \Big) + O\Big(\frac{1}{\log x}\Big)+O(\sigma_0-\frac 12) \\
=&   \sum_{\substack {p^l \leq x \\  l \geq 1}} \frac {\alpha^{2l}_{f}(p, 1)+\alpha^{2l}_{f}(p, 2)+1}{lp^{l\sigma_0}} \frac{\log \leg {x}{p^l}}{\log x}
 +  \log \kappa  \Big(\sigma_0 -\half + \frac{1}{\log x} \Big )
 \\
 &\hskip .5 in +F(\sigma_0) \Big( \frac{x^{\frac 12-\sigma_0}}{(\sigma_0-\half) \log^2 x} -\frac{1}{\log x}-\half (\sigma_0-\half)
 \Big) + O\Big(\frac{1}{\log x}\Big)+O(\sigma_0-\frac 12),
 \end{align*}
  where the last estimation above follows by noting that we have $\alpha^{2l}_{f}(p, 1)+\alpha^{2l}_{f}(p, 2)+1 \in \mr$ for all $l \geq 1$ by \eqref{alpha}. By setting $\sigma_0 =\frac12+ \frac{\lam}{\log x}$ and then omitting the term involving with $F(\sigma_0)$ since it is negative, we obtain the assertion of the proposition, this completes the proof.
 \end{proof}

   We want to further simplify the upper bound for $\log |L(\frac 12, \operatorname{sym}^2 f)|$ given in \eqref{logLupperbound}. To do so, we first observe from \eqref{alpha} that the terms on the right-hand side of \eqref{logLupperbound} corresponding to $p^l$ with $l \geq 3$ contribute $O(1)$. Also, by \eqref{lambdafpnu} we have $\alpha^{2}_{f}(p, 1)+\alpha^{2}_{f}(p, 2)+1=\lambda_f(p^2)$. We deduce from these observations by setting $\lam=\lam_0$ in \eqref{logLupperbound} to see that
\begin{align}
\label{logLupperbounduptosquare}
\begin{split}
 & \log |L(\frac 12, \operatorname{sym}^2 f)| \le    \sum_{\substack {p \leq x}} \frac {\lambda_{f}(p^2)}{p^{\frac12+ \frac{\lam_0}{\log x}}} \frac{\log \leg {x}{p}}{\log x}+ \sum_{\substack{  p \leq  x^{1/2} }} \frac {\alpha^{4}_{f}(p, 1)+\alpha^{4}_{f}(p, 2)+1}{2p^{1+ \frac{2\lam_0}{\log x}}} \frac{\log \leg {x}{p^2}}{\log x} + (1+\lam_0)\frac{ \log \kappa}{\log x}+ O( 1 ) .
\end{split}
\end{align}
 Note that by Lemma \ref{RS} and \eqref{alpha}, we have
\begin{align}
\label{estPlogP}
  \sum_{\substack{ p \leq x^{1/2} }}  \frac{\alpha^{4}_{f}(p, 1)+\alpha^{4}_{f}(p, 2)+1}{p^{1+\frac{2\lambda_0}{\log x}}}  \frac{\log p}{\log x} \ll  \frac 1{\log x} \sum_{\substack{ p \leq x^{1/2} }}  \frac{\log p}{p} =O(1).
\end{align}

  Similarly, we have
\begin{align}
\label{estPlogx}
\begin{split}
  \sum_{\substack{  p \leq x^{1/2} }} \Big (\frac{1}{p^{1+2\lambda_0/\log x}}- \frac{1}{p} \Big ) & \ll \frac 1{\log x}\sum_{\substack{  p \leq x^{1/2} }}\frac{\log p}{p} \ll 1.
\end{split}
 \end{align}

   We apply the estimations obtained in \eqref{estPlogP} and \eqref{estPlogx} into \eqref{logLupperbounduptosquare} to deduce that
\begin{align}
\label{logLupperbound1}
\begin{split}
  \log |L(\frac 12, \operatorname{sym}^2 f)|
\le &    \sum_{\substack {p \leq x}} \frac {\lambda_{f}(p^2)}{p^{\frac12+ \frac{\lam_0}{\log x}}} \frac{\log \leg {x}{p}}{\log x}+\sum_{\substack {p \leq x^{1/2}}} \frac {\alpha^{4}_{f}(p, 1)+\alpha^{4}_{f}(p, 2)+1}{2p}
 + (1+\lam_0)\frac{ \log \kappa}{\log x}+ O( 1 ) \\
\le &    \sum_{\substack {p \leq x}} \frac {\lambda_{f}(p^2)}{p^{\frac12+ \frac{\lam_0}{\log x}}} \frac{\log \leg {x}{p}}{\log x}+\sum_{\substack {p \leq x^{1/2}}} \frac {\alpha^{4}_{f}(p, 1)+\alpha^{4}_{f}(p, 2)}{2p}+\frac 12\log \log x
 + (1+\lam_0)\frac{ \log \kappa}{\log x}+ O( 1 ),
\end{split}
\end{align}
  where the last estimation above follows from Lemma \ref{RS}.

 We next apply \eqref{lambdaprelationfexplicit} and partial summation to see that when $x^{1/2} \geq (\log \kappa)^6$,
\begin{align}
\label{sumplarge}
 \sum_{\substack{(\log \kappa)^6 < p \leq x^{1/2} }}  \frac{\alpha^4_{f}(p, 1)+\alpha^4_{f}(p, 2) }{2p}   = O(1).
\end{align}

   Moreover, we deduce from \eqref{alpha} and Lemma \ref{RS} that
\begin{align}
\label{sumpsmall}
\begin{split}
  \sum_{\substack{\log \kappa  \leq p < (\log \kappa)^6}}  \frac{\alpha^4_{f}(p, 1)+\alpha^4_{f}(p, 2)}{p} \ll & \sum_{\substack{\log \kappa \leq p < (\log \kappa)^6}}
  \frac{1}{p}  = O(1), \\
   \sum_{\substack{p \leq \log \kappa}}  \frac{\alpha^4_{f}(p, 1)+\alpha^4_{f}(p, 2)}{p} \ll &
  \sum_{\substack{p \leq \log \kappa}}  \frac{1}{p}  = O(\log \log \log \kappa).
\end{split}
\end{align}

   Combining \eqref{logLupperbound1}-\eqref{sumpsmall}, we readily deduce the following result.
\begin{lemma}
\label{lem: logLbound1}
With the notation as above. Assuming the truth of GRH for $L(s, \operatorname{sym}^2 f)$, $L(s, \vee^2;\pi_{\operatorname{sym}^2 f})$ and $L(s, \wedge^2;\pi_{\operatorname{sym}^2 f})$. Let $x \geq (\log \kappa)^{12}$ and let $\lambda_0=0.4912\ldots$ denote the unique positive real number satisfying $e^{-\lambda_0} = \lambda_0+\frac {\lam^2_0}{2}$.
We have
\begin{align}
\label{logLupperboundsimplified}
\log |L(\frac 12, \operatorname{sym}^2 f)| \le      \sum_{\substack {p \leq x }} \frac {\lambda_f(p^2)}{p^{\frac12+ \frac{\lam_0}{\log x}}} \frac{\log \leg {x}{p}}{\log x}+\sum_{\substack{  p \leq  \log \kappa }} \frac {\alpha^{4}_{f}(p, 1)+\alpha^{4}_{f}(p, 2)}{2p}
 +\frac 12 \log \log x+ (1+\lam_0)\frac{ \log \kappa}{\log x}+ O(1).
\end{align}
   Also, we have
\begin{align}
\label{equ:3.3'}
\begin{split}
 & \log |L(\frac 12, \operatorname{sym}^2 f)| \leq  \sum_{\substack {p \leq x }} \frac {\lambda_f(p^2)}{p^{\frac12+ \frac{\lam_0}{\log x}}} \frac{\log \leg {x}{p}}{\log x}+\frac 12\log \log x+
 (1+\lam_0)\frac{ \log \kappa}{\log x}
 +O(\log \log \log \kappa).
\end{split}
 \end{align}
\end{lemma}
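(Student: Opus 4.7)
The plan is to read off Lemma \ref{lem: logLbound1} directly from the chain of estimates that have already been set up in \eqref{logLupperbound1}, \eqref{sumplarge} and \eqref{sumpsmall}. Since these three displays already sit in the paper before the statement of the lemma, what remains is essentially bookkeeping: break the sum $\sum_{p \leq x^{1/2}}(\alpha^{4}_{f}(p,1)+\alpha^{4}_{f}(p,2))/(2p)$ appearing in \eqref{logLupperbound1} into pieces tailored to the two desired conclusions.

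For the first inequality \eqref{logLupperboundsimplified}, I would decompose
\[
\sum_{p \leq x^{1/2}}\frac{\alpha^{4}_{f}(p,1)+\alpha^{4}_{f}(p,2)}{2p}
= \sum_{p \leq \log \kappa} + \sum_{\log \kappa < p \leq (\log \kappa)^{6}} + \sum_{(\log \kappa)^{6} < p \leq x^{1/2}} .
\]
The middle piece is $O(1)$ by the first display in \eqref{sumpsmall} (using $|\alpha_f(p,j)|=1$ from \eqref{alpha} together with Lemma \ref{RS}), while the tail piece is $O(1)$ by \eqref{sumplarge}, which itself is the output of partial summation against the GRH-conditional estimate \eqref{lambdaprelationfexplicit} for $L(s,\vee^{2};\pi_{\operatorname{sym}^{2}f})$ and $L(s,\wedge^{2};\pi_{\operatorname{sym}^{2}f})$. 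Only the short sum over $p \leq \log \kappa$ survives, and plugging back into \eqref{logLupperbound1} gives \eqref{logLupperboundsimplified} immediately.

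For the second inequality \eqref{equ:3.3'}, I would start from \eqref{logLupperboundsimplified} and absorb the surviving short sum $\sum_{p \leq \log \kappa}(\alpha^{4}_{f}(p,1)+\alpha^{4}_{f}(p,2))/(2p)$ into the trivial bound $\ll \sum_{p \leq \log \kappa} 1/p$, which by Lemma \ref{RS} (or equivalently the second display of \eqref{sumpsmall}) is $O(\log \log \log \kappa)$. This replaces the short prime sum by an $O(\log\log\log\kappa)$ term and yields \eqref{equ:3.3'}.

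There is no serious obstacle here: the only genuinely nontrivial input is the tail estimate \eqref{sumplarge}, whose proof needed the GRH bound \eqref{lambdaprelationfexplicit} coming from the Rankin–Selberg identities \eqref{lambdaprelationf}–\eqref{sumsymext}, and that has already been carried out before the lemma is stated. The remaining manipulations are just Mertens-type estimates that split the prime range at $\log \kappa$ and $(\log \kappa)^{6}$; the hypothesis $x \geq (\log\kappa)^{12}$ simply ensures $x^{1/2} \geq (\log \kappa)^{6}$ so that both splits make sense.
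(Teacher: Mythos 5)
Your proposal is correct and matches the paper's own (very terse) argument, which simply says that the lemma follows by combining \eqref{logLupperbound1}--\eqref{sumpsmall}; you have filled in exactly the intended bookkeeping, splitting the range at $\log\kappa$ and $(\log\kappa)^{6}$ and invoking \eqref{sumplarge} and \eqref{sumpsmall} for the tail and middle pieces, with the hypothesis $x\geq(\log\kappa)^{12}$ serving precisely to make $x^{1/2}\geq(\log\kappa)^{6}$. No gaps.
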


\section{Outline of the Proof}
\label{sec 2'}

   As mentioned earlier, we shall only consider the proof of Theorem \ref{thmlowerbound} here. Since the case $2k=1$ of \eqref{lowerbound} can be deduced from Lemma \ref{Twistedfirstmoment}, we may assume in our proof that $2k \neq 1$ is a fixed positive real number and that $\kappa$ is a large number.  Without further notice, we point out here that throughout our proof, the explicit constants involved in various estimations using $\ll$ or the big-$O$ notations depend on $k$ only and are uniform with
 respect to $\kappa$. We further make the convention that an empty product is defined to be $1$.

 Following the ideas of A. J. Harper in \cite{Harper},  we define for a large number $M$ depending on $k$ only,
\begin{align}
\label{alphadef}
\begin{split}
 \alpha_{0} = 0, \;\;\;\;\; \alpha_{j} = \frac{20^{j-1}}{(\log\log \kappa)^{2}} \;\;\; \forall \; j \geq 1, \quad
\mathcal{J} = \mathcal{J}_{k,\kappa} = 1 + \max\{j : \alpha_{j} \leq 10^{-M} \} .
\end{split}
\end{align}

   We denote $P_j=(\kappa^{\alpha_{j-1}}, \kappa^{\alpha_{j}}]$ for $1 \leq j \leq \mathcal{J}$. The above notations and Lemma \ref{RS} then imply that for $\kappa$ large enough,
\begin{align}
\label{sumpj}
\begin{split}
 \sum_{p  \in P_{1}} \frac{1}{p} \leq & \log\log \kappa=\alpha^{-1/2}_1,  \\
 \sum_{ p \in P_{j+1}} \frac{1}{p}
 =& \log \alpha_{j+1} - \log \alpha_{j} + o(1) =  \log 20 + o(1) \leq 10, \quad 1 \leq j \leq \mathcal{J}-1.
\end{split}
\end{align}

  For any real number $x$, we denote $\lceil x \rceil$ to be $\min \{ n \in \intz : n \geq x\}$. We then define a sequence of even natural
  numbers $\{ \ell_j \}_{1 \leq j \leq \mathcal{J}}$ such that $\ell_j =2\lceil e^A\alpha^{-3/4}_j \rceil$, where $A$ is a large number depending on $k$ only. We also define for $1 \leq j \leq \mathcal{J}$,
\begin{align}
\label{defP}
{\mathcal P}_j(f) = \sum_{p\in P_j} \frac{\lambda_f(p^2)}{\sqrt{p}}, \quad  {\mathcal Q}_j(f, 2k) =\Big (\frac{c_k {\mathcal
P}_j(f) }{\ell_j}\Big)^{r_k\ell_j},
\end{align}
  where
\begin{align}
\label{r_k}
\begin{split}
  c_k= & 64 \max (1, 2k), \\
r_k =& \left\{
 \begin{array}
  [c]{ll}
   2\lceil k/(k-1)\rceil & 2k>1,\\
  2\lceil (2-3k)/(1-2k)\rceil+2 & 2k<1.
 \end{array}
 \right.
 \end{split}
\end{align}

  For any non-negative integer $\ell$ and any real number $x$, we denote
\begin{equation*}
E_{\ell}(x) = \sum_{j=0}^{\ell} \frac{x^{j}}{j!}.
\end{equation*}
  We further apply the above notations to define for each $1 \leq j \leq \mathcal{J}$ and any real number $\alpha$,
\begin{align*}
{\mathcal N}_j(f, \alpha) = E_{\ell_j} (\alpha {\mathcal P}_j(f)), \quad \mathcal{N}(f, \alpha) = \prod_{j=1}^{\mathcal{J}} {\mathcal
N}_j(f,\alpha).
\end{align*}

   Our lemma below adapts the lower bounds principle of W. Heap and K. Soundararajan \cite{H&Sound} in our setting.
\begin{lemma}
\label{lem1}
 With the notation as above. For $0<k<1/2$, we have
\begin{align}
\label{basiclowerbound1}
\begin{split}
 & \mathop{{\sum}^{h}}\limits_{f\in H_{\kappa}} L(\frac 12, \operatorname{sym}^2 f)  \mathcal{N}(f, 2k-1) \\
 \leq & \Big ( \mathop{{\sum}^{h}}\limits_{f\in H_{\kappa}}|L(\frac 12, \operatorname{sym}^2 f)|^{2k} \Big )^{1/(2(2-3k))}
 \Big ( \mathop{{\sum}^{h}}\limits_{f\in H_{\kappa}} |L(\frac 12, \operatorname{sym}^2 f)|^2\mathcal{N}(f, 2k-2) \Big)^{(1-2k)/(2-3k)} \\
 & \times \Big ( \mathop{{\sum}^{h}}\limits_{f\in H_{\kappa}}  \prod^{\mathcal{J}}_{j=1} \Big ( {\mathcal N}_j(f, 2k) +  {\mathcal Q}_j(f, 2k)   \Big )  \Big)^{(1-2k)/(2(2-3k))}.
\end{split}
\end{align}
  Moreover, we have
\begin{align}
\label{basiclowerboundk0}
\begin{split}
  \mathop{{\sum}^{h}}\limits_{f\in H_{\kappa}} L(\frac 12, \operatorname{sym}^2 f)  \mathcal{N}(f, -1)
 \leq & \Big ( \mathop{{\sum}^{h}}\limits_{\substack{f \in H_{\kappa} \\ L(\frac 12, \operatorname{sym}^2 f) \neq 0}}1\Big )^{1/4}
 \Big ( \mathop{{\sum}^{h}}\limits_{f\in H_{\kappa}} |L(\frac 12, \operatorname{sym}^2 f)|^2\mathcal{N}(f, -2) \Big)^{1/2} \\
 & \times \Big ( \mathop{{\sum}^{h}}\limits_{f\in H_{\kappa}}  \prod^{\mathcal{J}}_{j=1} \Big ( {\mathcal N}_j(f, 0) +  {\mathcal Q}_j(f, 0)   \Big )  \Big)^{1/4}.
\end{split}
\end{align}

 For $k>1/2$, we have
\begin{align}
\label{basicboundkbig1}
\begin{split}
 &  \mathop{{\sum}^{h}}\limits_{f\in H_{\kappa}} L(\frac 12, \operatorname{sym}^2 f)  \mathcal{N}(f, 2k-1)
 \leq \Big (  \mathop{{\sum}^{h}}\limits_{f\in H_{\kappa}} |L(\frac 12, \operatorname{sym}^2 f)|^{2k} \Big )^{\frac {1}{2k}}
 \Big (  \mathop{{\sum}^{h}}\limits_{f\in H_{\kappa}}  \prod^{\mathcal{J}}_{j=1} \Big ( {\mathcal N}_j(f, 2k) +  {\mathcal Q}_j(f, 2k)   \Big ) \Big)^{\frac {2k-1}{2k}}.
 \end{split}
\end{align}
  The implied constants in \eqref{basiclowerbound1}-\eqref{basicboundkbig1} depend on $k$ only.
\end{lemma}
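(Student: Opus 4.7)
My plan is to derive all three inequalities via a single template: apply H\"older's inequality in a suitable way, and then reduce to a pointwise comparison of the local factors ${\mathcal N}_j$ and ${\mathcal Q}_j$. For $k > 1/2$, I would apply H\"older with the two exponents $p = 2k$ and $q = 2k/(2k-1)$, so that \eqref{basicboundkbig1} reduces to the per-$j$ pointwise bound $|{\mathcal N}_j(f, 2k-1)|^{2k/(2k-1)} \leq {\mathcal N}_j(f, 2k) + {\mathcal Q}_j(f, 2k)$, valid for every $j$ and every $f$. For $0 < k < 1/2$, I would apply H\"older with three exponents $(p, q, r) = \bigl( 2(2-3k),\ (2-3k)/(1-2k),\ 2(2-3k)/(1-2k) \bigr)$, whose reciprocals sum to $1$, splitting the summand as $L(\frac 12, \operatorname{sym}^2 f)\, \mathcal{N}(f, 2k-1) = A^{1/p} B^{1/q} C^{1/r}$ with $A = |L(\frac 12, \operatorname{sym}^2 f)|^{2k}$, $B = |L(\frac 12, \operatorname{sym}^2 f)|^2 \mathcal{N}(f, 2k-2)$ and $C = \prod_j ({\mathcal N}_j(f, 2k) + {\mathcal Q}_j(f, 2k))$. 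The choice of exponents is forced by the requirement that the total power of $|L(\frac 12, \operatorname{sym}^2 f)|$ work out to exactly $1$, i.e.\ $(2k)/p + 2/q = k/(2-3k) + 2(1-2k)/(2-3k) = 1$. The borderline case $k = 0$ is the specialization $(p, q, r) = (4, 2, 4)$ with $A = \mathbf{1}_{L(\frac 12, \operatorname{sym}^2 f) \neq 0}$ replacing $|L|^{2k}$.

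After this H\"older step, each inequality reduces to a pointwise statement; in the three-factor case this reads
\[
\prod_j {\mathcal N}_j(f, 2k-1) \leq \prod_j {\mathcal N}_j(f, 2k-2)^{(1-2k)/(2-3k)} \bigl( {\mathcal N}_j(f, 2k) + {\mathcal Q}_j(f, 2k) \bigr)^{(1-2k)/(2(2-3k))},
\]
with analogous statements in the other two cases. By the product structure, it suffices to prove the corresponding per-$j$ inequality in the single real variable $x = \mathcal{P}_j(f)$. I observe that every factor ${\mathcal N}_j(f, \alpha) = E_{\ell_j}(\alpha x)$ is strictly positive because $\ell_j$ is even: for $\alpha x \geq 0$ this is trivial, and for $\alpha x < 0$ the alternating-series bound gives $E_{\ell_j}(\alpha x) \geq e^{\alpha x} > 0$, so the inequalities make sense as stated.

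The per-$j$ inequality I would establish by splitting into two regimes according to $|x|$. In the small regime $|x| \leq \ell_j/(ec_k)$, the tail estimate $E_\ell(y) = e^y \bigl( 1 + O((e|y|/\ell)^{\ell}) \bigr)$ allows me to replace each $E_{\ell_j}$ by a true exponential, and then the main exponents cancel exactly: for instance, in the three-factor case one checks by direct computation that
\[
e^{(2k-1)x} = \bigl(e^{(2k-2)x}\bigr)^{(1-2k)/(2-3k)} \cdot \bigl(e^{2kx}\bigr)^{(1-2k)/(2(2-3k))},
\]
and the errors are absorbed because $\ell_j = 2\lceil e^A \alpha_j^{-3/4}\rceil$ is taken large enough. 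In the large regime $|x| > \ell_j/(ec_k)$, I would invoke the crude bound $|E_\ell(\alpha x)| \leq (e|\alpha x|/\ell)^\ell$ (from Stirling applied to the last term of the truncated series) and exploit the auxiliary term $(c_k|x|/\ell_j)^{r_k \ell_j}$ in ${\mathcal Q}_j$; the explicit choices $c_k = 64 \max(1, 2k)$ and $r_k$ as in \eqref{r_k} are designed precisely so that $({\mathcal Q}_j(f, 2k))^{(1-2k)/(2(2-3k))}$ (or $\mathcal{Q}_j(f, 2k)$ itself in the two-factor case) dominates $|E_{\ell_j}((2k-1)x)|$ raised to the relevant H\"older power.

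The main obstacle I foresee is executing this per-$j$ case analysis uniformly in $j$: one must verify both that the elementary exponent identity in the small regime survives the passage $E_\ell \to \exp$ with sufficiently controlled error, and that the polynomial in $x/\ell_j$ produced by $\mathcal{Q}_j$ strictly dominates the polynomial growth of $|E_{\ell_j}((2k-1)x)|^{\text{power}}$ in the large regime, with a clean match at the transition $|x| \sim \ell_j/(ec_k)$. Once this local inequality is secured, taking products over $j$ and summing against $\omega_f^{-1}$ immediately yields \eqref{basiclowerbound1}--\eqref{basicboundkbig1}.
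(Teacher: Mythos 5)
Your overall architecture coincides with the paper's: the same H\"older exponents ($2k$ and $2k/(2k-1)$ for $k>1/2$; the triple $\bigl(2(2-3k),\,(2-3k)/(1-2k),\,2(2-3k)/(1-2k)\bigr)$, specializing to $(4,2,4)$ at $k=0$, for $k<1/2$), followed by a per-$j$ dichotomy in $|{\mathcal P}_j(f)|$ using $E_{\ell}(y)=e^y(1+O((e|y|/\ell)^{\ell}))$ in the small range and the ${\mathcal Q}_j$ term in the large range. The difference is in how the third H\"older factor is produced. The paper first inserts $1\leq \mathcal{N}(f,2k-2)^{(1-c)/2}\mathcal{N}(f,2-2k)^{(1-c)/2}$ (from $\mathcal{N}(f,\alpha)\mathcal{N}(f,-\alpha)\geq 1$), applies H\"older, and is then left to show $\sum^h \mathcal{N}(f,2k-1)^{2(2-3k)/(1-2k)}\mathcal{N}(f,2-2k)^{2}\ll \sum^h\prod_j({\mathcal N}_j(f,2k)+{\mathcal Q}_j(f,2k))$ --- an inequality in which every $E_{\ell_j}$ needs only to be bounded \emph{above}. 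You instead posit the pointwise bound $\mathcal{N}(f,2k-1)\leq \mathcal{N}(f,2k-2)^{(1-2k)/(2-3k)}\prod_j({\mathcal N}_j(f,2k)+{\mathcal Q}_j(f,2k))^{(1-2k)/(2(2-3k))}$ before H\"older. Your exponent identity is correct, but this formulation places ${\mathcal N}_j(f,2k-2)$ on the right with a \emph{positive} exponent, so in the large regime you must bound it from \emph{below}. This is a real issue: for even $\ell$ the minimum of $E_{\ell}$ over $\mathbb{R}$ is of size $c^{\ell}$ with $c<1$ (attained near the real zero of $E_{\ell-1}$, at argument $\asymp -\ell$), so ${\mathcal N}_j(f,2k-2)^{1/q}$ can be exponentially small in $\ell_j$ precisely in your large regime, and your proposal never addresses this. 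It can be repaired (the loss $c^{\ell_j}$ is beaten by ${\mathcal Q}_j^{1/r}$ once the threshold is set correctly, or one can route through $\mathcal{N}_j(f,2k-2)^{-1}\leq\mathcal{N}_j(f,2-2k)$ as the paper effectively does), but as written the large-regime step is incomplete.

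Two further concrete problems in the large regime. First, your threshold $|x|\leq \ell_j/(ec_k)$ is mismatched with $c_k=64\max(1,2k)$: for $\ell_j/(ec_k)<|x|<\ell_j/c_k$ one has $c_k|x|/\ell_j<1$, so ${\mathcal Q}_j(f,2k)=(c_k|x|/\ell_j)^{r_k\ell_j}$ is \emph{exponentially small} and dominates nothing, while $|E_{\ell_j}((2k-1)x)|$ can exceed $1$ there (e.g.\ $x<0$, $k<1/2$); the claimed domination fails on this whole subrange. The split must be made at a point where $c_k|x|/\ell_j>1$ in the large regime --- the paper uses $\ell_j/60$ against $c_k\geq 64$ (and $\ell_j/(20(2k+1))$ against $c_k=128k$ for $k>1/2$). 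Second, the crude bound $|E_{\ell}(y)|\leq (e|y|/\ell)^{\ell}$ is false for $|y|\ll\ell$ (already $E_{\ell}(0)=1$); the sum is not controlled by its last term unless the terms are increasing, i.e.\ $|y|\geq\ell$. The correct statement, via Rankin's trick as in \eqref{N22kbound}, is $|E_{\ell}(y)|\leq (C'|y|/\ell)^{\ell}$ \emph{only} for $|y|>\ell/C$ with $C'$ slightly larger than $C$, which again forces the threshold choice above. With these three points fixed, your argument closes and is essentially the paper's proof.
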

\begin{proof}
   We recall from Section \ref{sec:cusp form} that $\lambda_f(n) \in \mr$ for $f \in H_{\kappa}$. This implies that we have ${\mathcal N}_j(f, \alpha) \in \mr$ for all $1 \leq j \leq \mathcal{J}$ so that ${\mathcal N}(f, \alpha) \in \mr$. Moreover, as $\ell_j$ are even integers for all $1 \leq j \leq \mathcal{J}$, it follows from \cite[Lemma 1]{Radziwill&Sound} that ${\mathcal N}(f, \alpha)>0$. Further, it follows from \cite[Lemma 4.1]{Gao2021-2} that we have
\begin{align*}
 \mathcal{N}(f, \alpha)\mathcal{N}(f, -\alpha)  \geq 1.
\end{align*}
  We deduce from this that for $0<k<1/2$ and any real $c$ such that $0<c<1$,
\begin{align}
\label{upperboundc}
\begin{split}
 & \mathop{{\sum}^{h}}\limits_{f\in H_{\kappa}} L(\frac 12, \operatorname{sym}^2 f)  \mathcal{N}(f, 2k-1)
  \leq \mathop{{\sum}^{h}}\limits_{f\in H_{\kappa}} |L(\frac 12, \operatorname{sym}^2 f)|  \mathcal{N}(f, 2k-1) \\
 \leq & \mathop{{\sum}^{h}}\limits_{f\in H_{\kappa}} |L(\frac 12, \operatorname{sym}^2 f)|^{c}
 \cdot |L(\frac 12, \operatorname{sym}^2 f)|^{1-c} \mathcal{N}(f, 2k-2)^{(1-c)/2}  \cdot
 \mathcal{N}(f,2k-1) \mathcal{N}(f, 2-2k)^{(1-c)/2}.
\end{split}
\end{align}

  Applying H\"older's inequality with exponents $2k/c, 2/(1-c), ((1+c)/2-c/(2k))^{-1}$ to the last sum above, we derive that
\begin{align*}
\begin{split}
 & \mathop{{\sum}^{h}}\limits_{f\in H_{\kappa}} L(\frac 12, \operatorname{sym}^2 f)  \mathcal{N}(f, 2k-1) \\
 \leq & \Big ( \mathop{{\sum}^{h}}\limits_{f\in H_{\kappa}}|L(\frac 12, \operatorname{sym}^2 f)|^{2k} \Big )^{c/(2k)}
 \Big ( \mathop{{\sum}^{h}}\limits_{f\in H_{\kappa}} |L(\frac 12, \operatorname{sym}^2 f)|^2\mathcal{N}(f, 2k-2) \Big)^{(1-c)/2} \\
 & \times \Big ( \mathop{{\sum}^{h}}\limits_{f\in H_{\kappa}}  \mathcal{N}(f, 2k-1)^{((1+c)/2-c/(2k))^{-1}}\mathcal{N}(f, 2-2k)^{(1-c)/2
 \cdot ((1+c)/2-c/(2k))^{-1}}  \Big)^{(1+c)/2-c/(2k)}.
\end{split}
\end{align*}

  We now set
$$(1-c)/2 \cdot ((1+c)/2-c/(2k))^{-1}=2.$$
   This implies that $c=(\frac 2k-3)^{-1}$ and that
$$((1+c)/2-c/(2k))^{-1}=\frac {2(2-3k)}{1-2k}.$$
   One checks that the above value of $c$ does satisfy that $0<c<1$ when $0<k<1/2$. We thus conclude that when $0<k<1/2$, we have
\begin{align}
\label{basiclowerbound}
\begin{split}
 & \mathop{{\sum}^{h}}\limits_{f\in H_{\kappa}} L(\frac 12, \operatorname{sym}^2 f)  \mathcal{N}(f, 2k-1) \\
 \leq & \Big ( \mathop{{\sum}^{h}}\limits_{f\in H_{\kappa}}|L(\frac 12, \operatorname{sym}^2 f)|^{2k} \Big )^{1/(2(2-3k))}
 \Big ( \mathop{{\sum}^{h}}\limits_{f\in H_{\kappa}} |L(\frac 12, \operatorname{sym}^2 f)|^2\mathcal{N}(f, 2k-2) \Big)^{(1-2k)/(2-3k)} \\
 & \times \Big ( \mathop{{\sum}^{h}}\limits_{f\in H_{\kappa}}  \mathcal{N}(f, 2k-1)^{(2(2-3k))/(1-2k)}\mathcal{N}(f, 2-2k)^2  \Big)^{(1-2k)/(2(2-3k))}.
\end{split}
\end{align}

  Similarly, we set $k=c=0$ in \eqref{upperboundc} and apply H\"older's inequality with exponents $4, 2, 4$ to the last sum there to see that
\begin{align}
\label{upperboundk=0}
\begin{split}
  \mathop{{\sum}^{h}}\limits_{f\in H_{\kappa}} L(\frac 12, \operatorname{sym}^2 f)  \mathcal{N}(f, -1)
 \leq & \Big (  \mathop{{\sum}^{h}}\limits_{\substack{f \in H_{\kappa} \\ L(\frac 12, \operatorname{sym}^2 f) \neq 0}}1 \Big )^{1/4}
 \Big ( \mathop{{\sum}^{h}}\limits_{f\in H_{\kappa}} |L(\frac 12, \operatorname{sym}^2 f)|^2\mathcal{N}(f, -2) \Big)^{1/2}  \Big ( \mathop{{\sum}^{h}}\limits_{f\in H_{\kappa}}  \mathcal{N}(f, -1)^{4}\mathcal{N}(f, -2)^{2}  \Big)^{1/4}.
\end{split}
\end{align}

  An analogue procedure also allows us to deduce that when $k>1/2$, we have
\begin{align}
\label{basicboundkbig}
\begin{split}
 &  \mathop{{\sum}^{h}}\limits_{f\in H_{\kappa}} L(\frac 12, \operatorname{sym}^2 f)  \mathcal{N}(f, 2k-1)
 \leq  \Big (  \mathop{{\sum}^{h}}\limits_{f\in H_{\kappa}} |L(\frac 12, \operatorname{sym}^2 f)|^{2k} \Big )^{\frac {1}{2k}}
 \Big (  \mathop{{\sum}^{h}}\limits_{f\in H_{\kappa}} \mathcal{N}(f, 2k-1)^{\frac {2k}{2k-1}}  \Big)^{\frac {2k-1}{2k}}.
 \end{split}
\end{align}

  We further simplify the right-hand side expressions in \eqref{basiclowerbound}-\eqref{basicboundkbig}
by noticing that the Taylor formula with integral remainder implies that for any $z \in \mc$,
\begin{align*}
\begin{split}
 \Big|e^z-\sum^{d-1}_{j=0}\frac {z^j}{j!}\Big| =& \Big|\frac 1{(d-1)!}\int^z_0e^t(z-t)^{d-1}dt\Big|=\Big|\frac {z^d}{(d-1)!}\int^1_0 e^{zs}(1-s)^{d-1}ds\Big| \\
\leq & \frac {|z|^d}{d!}\max (1, e^{\Re z}) \leq \frac {|z|^d}{d!}e^z \max (e^{-z}, e^{\Re z-z}) \leq \frac {|z|^d}{d!}e^z e^{|z|}.
\end{split}
 \end{align*}

   We derive from this that
\begin{align}
\label{ezrelation}
\begin{split}
  \sum^{d-1}_{j=0}\frac {z^j}{j!} =e^z(1+O(\frac {|z|^d}{d!}e^{|z|})).
\end{split}
 \end{align}

 Now, Stirling's formula (see \cite[(5.112)]{iwakow}) implies that
\begin{align}
\label{Stirling}
\begin{split}
 (\frac me)^m \leq m! \ll \sqrt{m} ( \frac {m }{e} )^{m}.
\end{split}
\end{align}

   We set $z=\alpha{\mathcal P}_j(f), d=\ell_j+1$ in \eqref{ezrelation} and apply \eqref{Stirling} to deduce that
\begin{align*}
\begin{split}
 {\mathcal N}_j(f, \alpha) =\exp ( \alpha{\mathcal P}_j(f) )\left( 1+ O\Big (\exp ( |\alpha {\mathcal P}_j(f)| )
 \left( \frac{e|\alpha {\mathcal P}_j(f)|}{ \ell_j+1} \right)^{ \ell_j+1}  \Big)  \right).
\end{split}
\end{align*}

It follows from the above that when
  $|{\mathcal P}_j(f)| \le \ell_j/(20(1+|\alpha|))$,
\begin{align}
\label{Njest1}
{\mathcal N}_j(f, \alpha)  = \exp ( \alpha {\mathcal P}_j(f)  ) \left( 1+   O(e^{-\ell_j}) \right).
\end{align}

  We apply \eqref{Njest1} to see that when $0 \leq k<1/2$ and $|{\mathcal P}_j(f)| \le \ell_j/60$,
\begin{align}
\label{est1}
{\mathcal N}_j(f, 2k-1)^{\frac {2(2-3k)}{1-2k}} {\mathcal N}_j(f, 2-2k)^{2}
&= \exp( 2k {\mathcal P}_j(f))\Big( 1+ O\big( e^{-\ell_j} \big) \Big)^{\frac {2(3-5k)}{1-2k}} = {\mathcal N}_j(f, 2k)
\Big( 1+ O\big(e^{-\ell_j} \big) \Big)^{\frac {5-8k}{1-2k}}.
\end{align}

  Moreover, we note that when $0\leq k<1/2$ and $|{\mathcal P}_j(f)| > \ell_j/60$,
\begin{align}
\label{N22kbound}
\begin{split}
|{\mathcal N}_j(f, 2-2k)| &\le \sum_{r=0}^{\ell_j} \frac{|2{\mathcal P}_j(f)|^r}{r!} \le
|{\mathcal P}_j(f)|^{\ell_j} \sum_{r=0}^{\ell_j} \Big( \frac{60}{\ell_j}\Big)^{\ell_j-r} \frac{2^r}{r!}   \le \Big( \frac{64 |{\mathcal P}_j(f)|}{\ell_j}\Big)^{\ell_j} .
\end{split}
\end{align}
  Notice that the above estimation also holds for $|{\mathcal N}_j(f, 2k-1)|$, so that these estimations and the definition of ${\mathcal Q}_j(f, 2k)$
given in \eqref{defP} imply that when $0\leq k<1/2$ and $|{\mathcal P}_j(f)| > \ell_j/60$,
\begin{align}
\label{est2}
{\mathcal N}_j(f, 2k-1)^{\frac {2(2-3k)}{1-2k}} {\mathcal N}_j(f, 2-2k)^{2}
&\leq  {\mathcal Q}_j(f, 2k).
\end{align}

  We deduce from \eqref{est1} and \eqref{est2} that when $0\leq k<1/2$, we have
\begin{align}
\label{upperboundprodofN}
\begin{split}
 &  \mathop{{\sum}^{h}}\limits_{f\in H_{\kappa}}  \mathcal{N}(f, 2k-1)^{(2(2-3k))/(1-2k)}\mathcal{N}(f, 2-2k)^2 \\
\le &
\mathop{{\sum}^{h}}\limits_{f\in H_{\kappa}}  \Big ( \prod^{\mathcal{J}}_{j=1}
\Big ({\mathcal N}_j(f, 2k)
\Big( 1+ O\big(e^{-\ell_j} \big) \Big)^{\frac {5-8k}{1-2k}}+   {\mathcal Q}_j(f, 2k)  \Big )\Big
) \\
\ll & \prod^{\mathcal{J}}_{j=1} \max \Big( \Big( 1+ O\big(e^{-\ell_j} \big) \Big)^{\frac {5-8k}{1-2k}}, 1 \Big )
\mathop{{\sum}^{h}}\limits_{f\in H_{\kappa}}   \prod^{\mathcal{J}}_{j=1} \Big ({\mathcal N}_j(f, 2k) +  {\mathcal Q}_j(f, 2k)  \Big )  \\
\ll & \mathop{{\sum}^{h}}\limits_{f\in H_{\kappa}}  \prod^{\mathcal{J}}_{j=1} \Big ( {\mathcal N}_j(f, 2k) +  {\mathcal Q}_j(f, 2k)   \Big ),
\end{split}
\end{align}
 where the last estimation above follows by noting that
\begin{align*}
\begin{split}
 \prod^{\mathcal{J}}_{j=1} \max \Big( \Big( 1+ O\big(e^{-\ell_j} \big) \Big)^{\frac {5-8k}{1-2k}}, 1 \Big ) \ll 1.
\end{split}
\end{align*}

  Similarly, we apply \eqref{Njest1} to see that when $k>1/2$ and $|{\mathcal P}_j(f)| \le \ell_j/(20(2k+1))$,
\begin{align}
\label{est11}
{\mathcal N}_j(f, 2k-1)^{\frac {2k}{2k-1}}
&= \exp( 2k {\mathcal P}_j(f))\Big( 1+ O\big( e^{-\ell_j} \big) \Big)^{\frac {2k}{2k-1}} = {\mathcal N}_j(f, 2k)
\Big( 1+ O\big(e^{-\ell_j} \big) \Big)^{\frac {1}{2k-1}} .
\end{align}

  Moreover, we note that when $k>1/2$ and $|{\mathcal P}_j(f)| > \ell_j/(20(2k+1))$,
\begin{align*}
\begin{split}
|{\mathcal N}_j(f, 2k-1)| &\le \sum_{r=0}^{\ell_j} \frac{|(2k-1){\mathcal P}_j(f)|^r}{r!} \le
|{\mathcal P}_j(f)|^{\ell_j} \sum_{r=0}^{\ell_j} \Big( \frac{(20(2k+1))}{\ell_j}\Big)^{\ell_j-r} \frac{(2k-1)^r}{r!}
\le \Big( \frac{24(2k+1) |{\mathcal P}_j(f)|}{\ell_j}\Big)^{\ell_j} .
\end{split}
\end{align*}
  It follows from this and the definition of ${\mathcal Q}_j(f, 2k)$
given in \eqref{defP} that when $k>1/2$ and $|{\mathcal P}_j(f)| > \ell_j/(20(2k+1))$,
\begin{align}
\label{est21}
{\mathcal N}_j(f, 2k-1)^{\frac {2k}{2k-1}}
&\leq  {\mathcal Q}_j(f, 2k).
\end{align}

  We deduce from \eqref{est11} and \eqref{est21} and argue as in the derivation of \eqref{upperboundprodofN} that when $k>1/2$, we have
\begin{align}
\label{upperboundprodofN1}
\begin{split}
  \mathop{{\sum}^{h}}\limits_{f\in H_{\kappa}}  \mathcal{N}(f, 2k-1)^{2k/(2k-1)} \ll & \mathop{{\sum}^{h}}\limits_{f\in H_{\kappa}}
  \prod^{\mathcal{J}}_{j=1} \Big ( {\mathcal N}_j(f, 2k) +  {\mathcal Q}_j(f, 2k)   \Big ).
\end{split}
\end{align}

   The assertions of the lemma now follow by substituting the estimations \eqref{upperboundprodofN} (resp. \eqref{upperboundprodofN1}) into
   \eqref{basiclowerbound} and \eqref{upperboundk=0} (resp. \eqref{basicboundkbig}). This completes the proof of the lemma.
\end{proof}

 Notice that we have $\mathop{{\sum}^{h}}\limits_{\substack{f \in H_{\kappa}}}1 \ll 1$ by Lemma \ref{LemDeltaest}.
It follows from this and Lemma \ref{lem1} that in order to achieve Theorem \ref{thmlowerbound}, it suffices to establish the following three propositions.
\begin{proposition}
\label{Prop4} With the notation as above, we have for all $k \geq 0$,
\begin{align*}
\mathop{{\sum}^{h}}\limits_{f\in H_{\kappa}} L(\frac 12, \operatorname{sym}^2 f)  \mathcal{N}(f, 2k-1) \gg  (\log \kappa)^{ \frac {(2k)^2+1}{2}}.
\end{align*}
\end{proposition}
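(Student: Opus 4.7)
The plan is to expand the mollifier $\mathcal{N}(f,2k-1)$ into a short Dirichlet polynomial in the Hecke eigenvalues $\lambda_f(l^2)$, apply the twisted first moment (Lemma \ref{Twistedfirstmoment}) term by term, and identify the resulting main term as an Euler product whose size is exactly $(\log\kappa)^{(4k^2+1)/2}$. The bookkeeping is arranged so that the $\log\kappa$ coming out of Lemma \ref{Twistedfirstmoment} is multiplied by a factor of size $(\log\kappa)^{(4k^2-1)/2}$, yielding the prediction.

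\textbf{Expansion and application of the twisted first moment.} I would Taylor expand each truncated exponential $E_{\ell_j}((2k-1)\mathcal{P}_j(f))$, expand each resulting power $\mathcal{P}_j(f)^{r_j}$ into multiple sums over tuples of primes in $P_j$, and repeatedly apply the Hecke relation \eqref{lambdamult} to rewrite every product $\prod_i \lambda_f(p_i^2)$ as a linear combination of $\lambda_f(l^2)$'s. Because the $P_j$ are pairwise disjoint, this yields
$$\mathcal{N}(f,2k-1) = \sum_l b(l)\lambda_f(l^2)$$
with coefficients factoring as $b(l) = \prod_j b_j(l_j)$ along the natural decomposition $l = \prod_j l_j$, and supported on $l \leq \kappa^{\sum_j \alpha_j \ell_j} = \kappa^{o(1)}$ by the parameter choices in \eqref{alphadef}. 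Every such $l$ lies deep inside the range $l \ll \kappa$ of Lemma \ref{Twistedfirstmoment}, so applying that lemma term by term produces
$$\mathop{{\sum}^{h}}\limits_{f \in H_{\kappa}} L(\tfrac12, \operatorname{sym}^2 f)\,\mathcal{N}(f,2k-1) = (\log\kappa + C)\, S \;-\; T \;+\; \text{(small)},$$
where $S = \sum_l b(l)/\sqrt l$ and $T = \sum_l b(l)\log l/\sqrt l$. Since $\log l = o(\log \kappa)$ uniformly on the support, $T$ is dominated by $(\log\kappa) S$.

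\textbf{Euler product lower bound for $S$.} The heart of the matter is to show $S \gg (\log\kappa)^{(4k^2-1)/2}$. I would first replace each $E_{\ell_j}$ by the full exponential at the cost of a negligible error, using the Stirling tail bound $|E_{\ell_j}(z) - e^z| \leq (|z|^{\ell_j+1}/(\ell_j+1)!)e^{|z|}$ in combination with the generous choice $\ell_j = 2\lceil e^A \alpha_j^{-3/4}\rceil$. After this reduction, $S$ factors across primes:
$$S \sim \prod_{p \leq \kappa^{\alpha_{\mathcal{J}}}} G_p(2k-1), \qquad G_p(\alpha) = \sum_{m \geq 0} \frac{\alpha^m F_m(p)}{m!\,p^{m/2}},$$
with $F_m(p) = \sum_t d(m,t) p^{-t/2}$ and $d(m,t)$ the Hecke structure constants defined by $\lambda_f(p^2)^m = \sum_t d(m,t)\lambda_f(p^{2t})$. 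Direct computation gives $F_1(p) = p^{-1/2}$ and $F_2(p) = 1 + p^{-1/2} + p^{-1}$, whence $G_p(\alpha) = 1 + (\alpha + \alpha^2/2)/p + O(p^{-3/2})$. Taking logarithms and applying Lemma \ref{RS} yields
$$\log S = \Big(\alpha + \frac{\alpha^2}{2}\Big)\sum_p \frac 1p + O(1) = \frac{(2k-1)(2k+1)}{2}\log\log\kappa + O(1),$$
so that $S \gg (\log\kappa)^{(4k^2-1)/2}$. Multiplication by the $\log\kappa$ factor produces the desired lower bound.

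\textbf{Main obstacle.} The delicate step is the Euler product analysis: the passage from the truncated $E_{\ell_j}$ to the full exponential must be made quantitative with an error that is truly lower order, and $S$ must be shown to be positive. Positivity is automatic when $2k - 1 \geq 0$ since then every $b(l) \geq 0$, but when $2k-1 < 0$ the coefficients $b(l)$ change sign and one must argue that enough cancellation is genuine to ensure $S$ matches its Euler-product prediction rather than being washed out. A case split on the sign of $2k-1$, together with careful control of $\sum_l |b(l)|\log l/\sqrt l$ to confirm that $T$ is dominated by $(\log\kappa) S$ even in the cancelling regime, should close the argument.
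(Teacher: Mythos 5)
Your overall route is the paper's: expand $\mathcal{N}(f,2k-1)$ into a linear combination of $\lambda_f(t^2)$ via complete multiplicativity and the Hecke relations, apply Lemma \ref{Twistedfirstmoment} term by term, and evaluate the resulting coefficient sum as an Euler product with local factors $1+\tfrac{(2k)^2-1}{2p}+O(p^{-3/2})$ (your computation $F_1(p)=p^{-1/2}$, $F_2(p)=1+p^{-1/2}+p^{-1}$, hence $\alpha+\alpha^2/2=\tfrac{(2k)^2-1}{2}$, agrees exactly with the paper). For $k\ge 1/2$ all coefficients are nonnegative and your argument closes. The removal of the truncation should, however, be done by Rankin's trick at the level of the coefficient sums (as the paper does), not via the pointwise bound $|E_{\ell_j}(z)-e^z|\le \tfrac{|z|^{\ell_j+1}}{(\ell_j+1)!}e^{|z|}$ with $z=(2k-1)\mathcal{P}_j(f)$, since the latter is not uniform over $f$ and the full exponential is not a Dirichlet polynomial.

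The genuine gap is your treatment of $T=\sum_t b(t)\log t/\sqrt{t}$ when $0\le k<1/2$. The bound you propose, $|T|\le(\max_t\log t)\sum_t|b(t)|/\sqrt{t}$ together with $\log t=o(\log\kappa)$, fails there: with $\alpha=2k-1<0$ the absolute-value sum has local factors $1+\tfrac{|\alpha|+\alpha^2/2}{p}+O(p^{-3/2})$ and hence is of size $(\log\kappa)^{|\alpha|+\alpha^2/2}$, which exceeds $S\asymp(\log\kappa)^{\alpha+\alpha^2/2}$ by the factor $(\log\kappa)^{2(1-2k)}$; the saving of a constant $\delta=40e^A10^{-M/4}$ in $\max\log t\le\delta\log\kappa$ cannot absorb a positive power of $\log\kappa$. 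So "careful control of $\sum_t|b(t)|\log t/\sqrt{t}$" cannot close the argument. The paper's resolution is structural: write $\log t=\sum_{p_0^{l_0}\| t}l_0\log p_0$, fix $p_0$ and $l_0$, and change variables $n_1\mapsto p_0^{l_0}n_1$; then only the $p_0$-local factor is estimated in absolute value (contributing $\ll l_0|2k-1|^{l_0}\log p_0/(p_0^{l_0}l_0!)$), while the Euler product over all $p\ne p_0$ is computed with its signs intact and remains $\asymp(\log\kappa)^{((2k)^2-1)/2}$. Summing $\sum_{p_0\le\kappa^{\alpha_{\mathcal{J}}}}\log p_0/p_0\ll 10^{-M/4}\log\kappa$ then shows the $\log t$ contribution is $\ll 10^{-M/4}(\log\kappa)^{((2k)^2+1)/2}$, which is beaten by the $\log\kappa$ main term once $M$ is large. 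Without this prime-by-prime decomposition (or an equivalent device preserving the signed Euler product), the case $0\le k<1/2$ of the proposition is not proved.
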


\begin{proposition}
\label{Prop5} With the notation as above, we have for all $k \geq 0$,
\begin{align*}
 \mathop{{\sum}^{h}}\limits_{f\in H_{\kappa}} \prod^{\mathcal{J}}_{j=1} \Big ( {\mathcal N}_j(f, 2k) +  {\mathcal Q}_j(f, 2k)   \Big )\ll (\log \kappa)^{\frac {(2k)^2}{2}}.
\end{align*}
\end{proposition}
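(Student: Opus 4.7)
The approach is to fully expand the product $\prod_{j=1}^{\mathcal{J}}\bigl(\mathcal{N}_j(f,2k)+\mathcal{Q}_j(f,2k)\bigr)$ into $2^{\mathcal J}$ terms indexed by subsets $S\subseteq\{1,\dots,\mathcal{J}\}$, each of the form $\prod_{j\in S}\mathcal{N}_j(f,2k)\prod_{j\notin S}\mathcal{Q}_j(f,2k)$. Writing $\mathcal{N}_j(f,2k)$ and $\mathcal{Q}_j(f,2k)$ as polynomials in $\mathcal{P}_j(f)$ of degrees $\ell_j$ and $r_k\ell_j$ respectively, and then expanding $\mathcal{P}_j(f)$ via \eqref{defP}, each such term becomes a finite linear combination of products $\lambda_f(p_1^2)\cdots\lambda_f(p_r^2)$ with primes drawn from $\bigcup_{j=1}^{\mathcal J}P_j$. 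Iterating the Hecke relation \eqref{lambdamult} rewrites every such product as a linear combination of single Hecke eigenvalues $\lambda_f(N)$, and Lemma \ref{LemDeltaest} then collapses the harmonic sum to
\begin{align*}
\mathop{{\sum}^{h}}\limits_{f\in H_{\kappa}}\prod_{j\in S}\mathcal{N}_j(f,2k)\prod_{j\notin S}\mathcal{Q}_j(f,2k)=c_1^{(S)}+O\bigl(e^{-\kappa/2}\bigr),
\end{align*}
where $c_1^{(S)}$ is the coefficient of $\lambda_f(1)$ in the expansion, provided every $N$ appearing satisfies $N\leq\kappa^2/10^4$.

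To secure this size constraint, note that the integers $N$ above are bounded by $\prod_j\kappa^{\alpha_jd_j}$ with $d_j=\ell_j$ for $j\in S$ and $d_j=r_k\ell_j$ otherwise. Using $\ell_j=2\lceil e^A\alpha_j^{-3/4}\rceil$ from \eqref{alphadef}, one has $\alpha_jd_j\ll_k e^A\alpha_j^{1/4}$, and the geometric decay of $\alpha_j^{1/4}$ from $j=\mathcal J$ downward yields $\sum_j\alpha_jd_j\ll_k e^A\cdot 10^{-M/4}$; choosing $M$ large enough depending only on $k$ and $A$ forces $N\leq\kappa$, well within the range of Lemma \ref{LemDeltaest}. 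For the main term $S=\{1,\dots,\mathcal J\}$, the coefficient $c_1^{(\{1,\dots,\mathcal J\})}$ is a Petersson-diagonal analogue of a Gaussian moment: using $\lambda_f(p^2)^2=\lambda_f(p^4)+\lambda_f(p^2)+1$ so that each ``paired'' prime contributes $1$, a standard Wick-type pairing computation in the expansion of $\mathcal{P}_j(f)^{2n}$ gives
\begin{align*}
c_1^{(\{1,\dots,\mathcal J\})}\leq\prod_{p\leq\kappa^{\alpha_{\mathcal J}}}\Big(1+\tfrac{(2k)^2}{2p}+O(p^{-3/2})\Big)\ll\exp\Big(\tfrac{(2k)^2}{2}\sum_{p\leq\kappa^{\alpha_{\mathcal J}}}\tfrac{1}{p}\Big)\ll(\log\kappa)^{(2k)^2/2},
\end{align*}
by Lemma \ref{RS} and the fact that $\alpha_{\mathcal J}$ is a positive constant.

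For subsets $S\neq\{1,\dots,\mathcal J\}$, pick any omitted index $j_0\notin S$. The $\mathcal Q_{j_0}$ factor introduces a $(r_k\ell_{j_0})$-th moment of $\mathcal{P}_{j_0}$, which by the same pairing analysis, Stirling's formula, and \eqref{sumpj} is bounded by
\begin{align*}
(c_k/\ell_{j_0})^{r_k\ell_{j_0}}\cdot\frac{(r_k\ell_{j_0})!}{2^{r_k\ell_{j_0}/2}(r_k\ell_{j_0}/2)!}\cdot V_{j_0}^{r_k\ell_{j_0}/2}\ll\Big(\tfrac{C_kV_{j_0}}{\ell_{j_0}}\Big)^{r_k\ell_{j_0}/2},\qquad V_{j_0}=\sum_{p\in P_{j_0}}\tfrac{1}{p}.
\end{align*}
Since \eqref{sumpj} gives $V_{j_0}\ll\alpha_{j_0}^{-1/2}$ in both ranges, while $\ell_{j_0}\gg e^A\alpha_{j_0}^{-3/4}$, the ratio $V_{j_0}/\ell_{j_0}\ll 10^{-M/4}e^{-A}$ can be made arbitrarily small by enlarging $A$. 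The remaining factors $j\neq j_0$ contribute at most the same $(\log\kappa)^{(2k)^2/2}$ from the Gaussian moment calculation, so the $\mathcal Q_{j_0}$ factor produces a super-exponential decay in $\ell_{j_0}$ that overwhelms both the at most $2^{\mathcal J}\leq(\log\log\kappa)^{O(1)}$ choices of $S$ and the main term, rendering every $S\neq\{1,\dots,\mathcal J\}$ contribution negligible.

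The main technical obstacle lies in the careful bookkeeping of the Hecke relations \eqref{lambdamult} during the expansion: one must separate the diagonal pairings (those contributing to $c_1^{(S)}$) from the off-diagonal configurations with $N>1$ while verifying uniformly that all such $N$ remain below $\kappa^2/10^4$, and one must check that the precise values of $c_k$ and the parity/magnitude of $r_k$ specified in \eqref{r_k} are calibrated so that $\mathcal{Q}_j$ genuinely dominates the ``large $\mathcal{P}_j$'' regime with enough margin to absorb the factorial combinatorics coming from the high-order moments.
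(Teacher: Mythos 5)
Your proposal is correct and follows essentially the same route as the paper: write the product as a short Dirichlet polynomial in the $\lambda_f(p^2)$, verify the length constraint via $\sum_j \alpha_j \ell_j \ll e^A 10^{-M/4}$ so that Lemma \ref{LemDeltaest} extracts the diagonal, evaluate the $\mathcal{N}$-part of the diagonal as an Euler product $\ll (\log \kappa)^{(2k)^2/2}$, and make each $\mathcal{Q}_j$ contribution negligible using the fact that $\ell_j \gg e^{A}\alpha_j^{-3/4}$ dominates $\sum_{p \in P_j} p^{-1} \ll \alpha_j^{-1/2}$. The only cosmetic differences are that the paper keeps $\prod_j(\mathcal{N}_j + \mathcal{Q}_j)$ intact and factors the diagonal coefficient $c_n(1)$ by multiplicativity instead of expanding over subsets $S$, and bounds the $\mathcal{Q}_j$ diagonal by Rankin's trick rather than your explicit Gaussian-moment count; both yield the same super-exponential decay in $\ell_j$.
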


\begin{proposition}
\label{Prop6} With the notation as above and assuming the truth of GRH, we have for $0 \leq k < 1/2$,
\begin{align*}
\mathop{{\sum}^{h}}\limits_{f\in H_{\kappa}} |L(\frac 12, \operatorname{sym}^2 f)|^2{\mathcal N}(f, 2k-2) \ll ( \log \kappa )^{2k^2+1}.
\end{align*}
\end{proposition}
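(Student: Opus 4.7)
\noindent\emph{Plan for the proof of Proposition \ref{Prop6}.} My strategy combines the GRH-based pointwise upper bound of Lemma \ref{lem: logLbound1} with the mollifier analysis of \cite{Sound2009, Harper} and the Petersson trace formula. First, I would apply \eqref{equ:3.3'} with $x = \kappa^{\alpha_{\mathcal J}}$. Since $\alpha_{\mathcal J}$ is a small positive constant, after exponentiating twice the term $(1+\lambda_0)\log \kappa/\log x$ contributes only a bounded factor, while $\tfrac{1}{2}\log\log x$ contributes a single $\log \kappa$. This produces the pointwise majorant
\begin{equation*}
 |L(\tfrac 12, \operatorname{sym}^2 f)|^2 \ll (\log \kappa) \exp\!\Bigl(2\sum_{j=1}^{\mathcal J}\widetilde{\mathcal P}_j(f)\Bigr),
\end{equation*}
where $\widetilde{\mathcal P}_j(f)$ agrees with $\mathcal P_j(f)$ up to the bounded factor $p^{-\lambda_0/\log x}\log(x/p)/\log x$ on each summand, which I treat interchangeably with $\mathcal P_j(f)$ in what follows.

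I would then split $H_\kappa$ into the \emph{good} set $\mathcal S$ on which $|\mathcal P_j(f)|\le \ell_j/60$ for every $1\le j\le \mathcal J$, and its complement. On $\mathcal S$ this threshold satisfies the hypotheses of \eqref{Njest1} both for $\alpha = 2k-2$ and for $\alpha = 2k$, so $\mathcal N_j(f,2k-2) = \exp((2k-2)\mathcal P_j(f))(1+O(e^{-\ell_j}))$ and $\exp(2k\mathcal P_j(f)) = \mathcal N_j(f,2k)(1+O(e^{-\ell_j}))$. Combining these with the exponential majorant above collapses the good-set contribution to a constant multiple of
\begin{equation*}
 (\log\kappa)\mathop{{\sum}^{h}}\limits_{f\in H_\kappa}\prod_{j=1}^{\mathcal J}\mathcal N_j(f,2k).
\end{equation*}
To estimate this sum I expand each truncated exponential as a polynomial in $\mathcal P_j(f)$, convert products $\prod \lambda_f(p^2)$ into linear combinations of $\lambda_f(n)$ via \eqref{lambdamult}, and apply Lemma \ref{LemDeltaest}. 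The resulting Dirichlet polynomial has total length $\prod_j \kappa^{\alpha_j \ell_j} = \kappa^{o(1)}$ by the definition $\ell_j = 2\lceil e^A\alpha_j^{-3/4}\rceil$ and $\alpha_{\mathcal J}\le 10^{-M}$, so only the diagonal survives. Using $\mathop{{\sum}^{h}}\lambda_f(p^2)^2 = 1+O(e^{-\kappa})$ (which follows from $\lambda_f(p^2)^2 = \lambda_f(p^4)+\lambda_f(p^2)+1$) and analogous identities for higher powers, a standard Gaussian-style moment computation yields
\begin{equation*}
 \mathop{{\sum}^{h}}\limits_{f\in H_\kappa}\prod_{j=1}^{\mathcal J}\mathcal N_j(f,2k) \ll \exp\!\Bigl(2k^2\sum_{j=1}^{\mathcal J}\sum_{p\in P_j}\frac{1}{p}\Bigr)\ll (\log\kappa)^{2k^2},
\end{equation*}
by \eqref{sumpj} and Lemma \ref{RS}. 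Hence the good set contributes $O((\log\kappa)^{2k^2+1})$.

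For the bad set, for each $f$ there exists some $j^\ast$ with $|\mathcal P_{j^\ast}(f)|>\ell_{j^\ast}/60$, and I would proceed by a union bound over $j^\ast$. I insert the indicator-replacing factor $(60\,|\mathcal P_{j^\ast}(f)|/\ell_{j^\ast})^{2s}$ for a large integer $s$, majorise the mollifier at $j^\ast$ by \eqref{N22kbound} (with $\alpha = 2k-2$) and the others by their exponential approximations, and pair $|L|^2$ against them via Cauchy--Schwarz if necessary. The remaining high-moment averages are again diagonalised through Lemma \ref{LemDeltaest}, with factorial weights bounded by Stirling \eqref{Stirling}; the $r_k$-th-power normalisation in $\mathcal Q_{j^\ast}$ from \eqref{defP}--\eqref{r_k} together with the geometric growth $\alpha_{j+1}/\alpha_j=20$ provides arbitrarily large polylogarithmic savings as $s$ grows, so the bad-set contribution is bounded by $(\log\kappa)^{2k^2+1-\varepsilon}$ for some $\varepsilon>0$. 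The main obstacle I anticipate is precisely this bookkeeping: verifying that the constants in the good/bad threshold, the exponent $\ell_j\asymp \alpha_j^{-3/4}$, the $r_k$-th-power structure of $\mathcal Q_j$, and the geometric ratio $20$ in \eqref{alphadef} interlock consistently to yield the claimed bound without losing polylogarithmic factors—this combinatorial matching, directly modelled on \cite{Harper}, is the delicate part of the argument.
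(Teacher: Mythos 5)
Your good-set analysis is essentially Harper's argument and matches the paper, but the proposal has two genuine gaps. First, the pointwise majorant you extract from \eqref{equ:3.3'} is too lossy: that inequality carries an error term $O(\log\log\log\kappa)$ coming from the small-prime sums $\sum_{p\leq \log\kappa}(\alpha^4_f(p,1)+\alpha^4_f(p,2))/(2p)$, and after multiplying by $2$ and exponentiating this becomes an unbounded factor $(\log\log\kappa)^{O(1)}$, so you only reach $(\log\kappa)^{2k^2+1}(\log\log\kappa)^{O(1)}$ rather than the sharp bound. To remove it one must work with \eqref{logLupperboundsimplified}, keep the fourth-power prime sums $P_m(f)$ explicit, and run a separate moment/decomposition argument over the sets where some $|P_m(f)|$ is large (the paper's sets $\mathcal{P}(m)$, which use the GRH input \eqref{lambdaprelationfexplicit} for the symmetric square of $\pi_{\operatorname{sym}^2 f}$). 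Relatedly, your Cauchy--Schwarz steps silently require an a priori weak bound such as $\sum^h|L(\tfrac12,\operatorname{sym}^2 f)|^{4}\ll(\log\kappa)^{O(1)}$; under GRH this is true but is itself a nontrivial large-deviation argument (the paper's Propositions \ref{propNbound} and \ref{prop: upperbound}) that your plan omits.

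Second, and more seriously, the bad-set treatment fails at the coarse scales. For $j^{\ast}\geq 2$ the block $P_{j^{\ast}}$ satisfies $\sum_{p\in P_{j^{\ast}}}1/p=\log 20+o(1)$ and $\ell_{j^{\ast}}=2\lceil e^A\alpha_{j^{\ast}}^{-3/4}\rceil$ is bounded for $j^{\ast}$ near $\mathcal{J}$, so the moments of $(60|\mathcal{P}_{j^{\ast}}(f)|/\ell_{j^{\ast}})^{2s}$ bottom out at a \emph{constant} saving of size roughly $e^{-c\ell_{j^{\ast}}^2}$; increasing $s$ eventually makes the moment grow, so there are no ``arbitrarily large polylogarithmic savings,'' and a constant saving cannot absorb the $(\log\kappa)^{O(1)}$ losses from Cauchy--Schwarz. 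This is precisely why Harper's method varies the truncation point: on the set where scales $1,\dots,j$ are controlled but scale $j+1$ is not, one must apply the explicit formula with $x=\kappa^{\alpha_j}$ (not the fixed $x=\kappa^{\alpha_{\mathcal J}}$ you propose), accepting the constant loss $e^{4/\alpha_j}$ so that every prime sum appearing in the majorant of $|L|^2$ is one of the controlled $\mathcal{M}_{l,j}(f)$ and can be replaced by a short truncated exponential; the large-deviation factor $(10^3|\mathcal{M}_{j+1,u}(f)|/\ell_{j+1})^{2\lceil 1/(10^3\alpha_{j+1})\rceil}$ then supplies a saving $e^{-10/\alpha_j}$ that beats $e^{4/\alpha_j}$ and makes the sum over $j$ converge. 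Without this scale-dependent truncation your factor $\exp(2\sum_j\widetilde{\mathcal{P}}_j(f))$ is simply not expressible as a short Dirichlet polynomial on the bad set, and the union bound does not close.
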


   In the remaining part of the paper, we shall prove the above propositions.

\section{Proof of Proposition \ref{Prop4}}
\label{sec 4}

    Define $\widetilde{\lambda}(n)$ to be the completely multiplicative function such that $\widetilde{\lambda}(p)=\lambda_f(p^2)$ on primes $p$ and $w(n)$ to be the multiplicative function such that
    $w(p^{\alpha}) = \alpha!$ for prime powers $p^{\alpha}$.
   Denote $\Omega(n)$ for the number of prime powers dividing $n$ and define functions $b_j(n), 1 \leq j \leq \mathcal{J}$ such that $b_j(n)=0$ or $1$ and $b_j(n)=1$ if and only if $\Omega(n) \leq \ell_j$ and the primes dividing $n$ are all from the interval $P_j$.
    We use these notations to write ${\mathcal N}_j(f, \alpha)$ as
\begin{equation}
\label{5.1}
{\mathcal N}_j(f, \alpha) = \sum_{n_j} \frac{\widetilde{\lambda}(n_j)}{\sqrt{n_j}} \frac{\alpha^{\Omega(n_j)}}{w(n_j)}  b_j(n_j) , \quad 1\le j\le \mathcal{J}.
\end{equation}
    Note that when $\kappa$ is large enough, we have
\begin{align}
\label{sumoverell}
 \mathcal{J} \ll \log \log \log \kappa, \quad \sum^{\mathcal{J}}_{j=1}\ell_j \leq 4e^A(\log \log \kappa)^{3/2}, \quad \sum^{\mathcal{J}}_{j=1} \alpha_{j}\ell_j \leq 40e^A 10^{-M/4}.
\end{align}
 It follows that ${\mathcal N}_j(f, \alpha)$ is a short Dirichlet polynomial since $b_j(n_j)=0$ unless $n_j \leq
    (\kappa^{\alpha_j})^{\ell_j}$. This implies that ${\mathcal N}(f, 2k-1)$ is also a short Dirichlet
    polynomial whose length is at most $\kappa^{\sum^{\mathcal{J}}_{j=1} \alpha_{j}\ell_j} < \kappa^{40e^A 10^{-M/4}}$ by \eqref{sumoverell}. We then write for simplicity that
\begin{align}
\label{Nexpression}
 {\mathcal N}(f, 2k-1)= \sum_{n  \leq \kappa^{40e^A 10^{-M/4}}} \frac{x_n}{\sqrt{n}} \widetilde{\lambda}(n),
\end{align}
    where we apply \eqref{sumoverell} to see that
\begin{align}
\label{xybounds}
 x_n \ll |2k-1|^{\sum^{\mathcal{J}}_{j=1}\ell_j} \ll \kappa^{\varepsilon}.
\end{align}

     We note that each $n$ appearing in the sum on the right-hand side expression of \eqref{Nexpression} can be written as
$n=\prod^{\mathcal{J}}_{j=1}n_j$ with $b_j(n_j)=1$ and this implies that $\widetilde{\lambda}(n)=\prod^{\mathcal{J}}_{j=1}\widetilde{\lambda}(n_j)$.
Using the relation \eqref{lambdamult}, we see
that $\lambda_f(p^2)^2$ can be written as a sum of $3$ terms of the form $\lambda_f(t^2)$ with $t | p^2$ with coefficients being either $0$ or  $1$. It follows from this and the definition of
$\widetilde{\lambda}(n_j)$ that each $\widetilde{\lambda}(n_j)$ can be written as a sum of at most $3^{\Omega(n_j)}$ (not necessary distinct) terms of the form $\lambda_f(t^2)$ with $t | n_j^2$ such that the coefficient of each term equals either $0$ or $1$. This further implies that $\widetilde{\lambda}(n)$ can be written as a sum of at most
 $3^{\sum^{\mathcal{J}}_{j=1}\Omega(n_j)}$ (not necessary distinct) terms of the form $\lambda_f(t^2)$ with $t | n^2$ such that the coefficient of each term equals either $0$ or $1$.  We group the identical terms together to write for simplicity that
\begin{align}
\label{lambdaf}
 \widetilde{\lambda}(n)=\sum_{t|n}c_n(t)\lambda_f(t^2),
\end{align}
 where $c_n(t) \geq 0$ and we have by \eqref{sumoverell},
\begin{align}
\label{cnt}
 \sum_{t|n}|c_n(t)| \leq 3^{\sum^{\mathcal{J}}_{j=1}\Omega(n_j)} \leq 3^{\sum^{\mathcal{J}}_{j=1}\ell_j}  \ll \kappa^{\varepsilon}.
\end{align}

  As  $\widetilde{\lambda}(n)$ is completely multiplicative, we have that $\widetilde{\lambda}(n_1n_2)=\widetilde{\lambda}(n_1)\widetilde{\lambda}(n_2)$. It
follows from this, the fact that $\lambda_f(n)$ is multiplicative and the relation given in \eqref{lambdaf} that for any $(n_1, n_2)=1$ and any $t|n_1n_2$, we may write $t$ uniquely as
$t=t_1t_2$ with $t_1|n_1, t_2|n_2$ such that
\begin{align}
\label{cmultiplicative}
  c_{n_1n_2}(t)= c_{n_1}(t_1)c_{n_2}(t_2).
\end{align}

  We now apply \eqref{Nexpression} and \eqref{lambdaf} to see that
\begin{align}
\label{Lfirstmoment}
\begin{split}
& \mathop{{\sum}^{h}}\limits_{f\in H_{\kappa}} L(\frac 12, \operatorname{sym}^2 f)  \mathcal{N}(f, 2k-1)
=   \sum_{n  \leq \kappa^{40e^A 10^{-M/4}}} \frac{x_n}{\sqrt{n}}\sum_{t|n}c_n(t)
\mathop{{\sum}^{h}}\limits_{f\in H_{\kappa}} L(\frac 12, \operatorname{sym}^2 f)\lambda_f(t^2).
\end{split}
\end{align}

  Note that the largest $t^2$ appearing on the right-hand side expression above does not exceed
$n^2 \ll \kappa$. We then apply \eqref{lstapproxk} to evaluate the inner sum above to see that, upon choosing $M$ large enough, the error term in \eqref{lstapproxk} contributes
\begin{align}
\label{sumchistar}
 \ll & \sum_{n  \leq \kappa^{40e^A 10^{-M/4}}} \frac{x_n}{\sqrt{n}}\sum_{t|n}\big| c_n(t)\big|(t^2\kappa^{-1/2+\varepsilon}+k^{-1}) \ll
 \kappa^{-1/4},
\end{align}
 where the last estimation above follows from \eqref{xybounds} and \eqref{cnt}.

 It therefore remains to consider the contribution to \eqref{Lfirstmoment} from the main terms in \eqref{lstapproxk}. Without loss of generality,
 we consider the contribution from the first term $-\log l/\sqrt{l}$ appearing on the right-hand side expression of \eqref{lstapproxk}. Writing $t=p^{l_0}_0l$
 with $(p_0,l)=1$ and $p_0$ being a prime, we see that this contribution is
\begin{align}
\label{sumoverlog}
\begin{split}
 -\sum_{\substack{ p_0 \in \bigcup^{\mathcal{J}}_{j=1} P_j }}\sum_{l_0 \geq 1}
\frac {l_0 \log p_0}{p^{l_0/2}_0}\sum_{n  \leq \kappa^{40e^A 10^{-M/4}}} \frac{x_n}{\sqrt{n}}  \sum_{\substack{ (l, p_0)=1 \\ p^{l_0}_0l|n}}\frac{c_n(p^{l_0}_0l)}{\sqrt{l}} .
\end{split}
\end{align}

   We consider the sum over a fixed $p_0 \in P_1$ in the above expression. Using the multiplicative relation \eqref{cmultiplicative}, we recast this sum as
\begin{align*}
\begin{split}
 -\sum_{l_0 \geq 1}
\frac {l_0 \log p_0}{p_0^{l_0/2}}\Big (\sum_{n_1} \frac{1}{\sqrt{n_1}} \frac{(2k-1)^{\Omega(n_1)}}{w(n_1)}  b_1(n_1)
\sum_{\substack{ (l_1, p_0)=1 \\ p_0^{l_0}l_1|n_1 }} \frac{c_{n_1}(p_0^{l_0}l_1)}{\sqrt{l_1}} \Big )\prod^{\mathcal{J}}_{j=2}
\Big (\sum_{n_j} \frac{1}{\sqrt{n_j}} \frac{(2k-1)^{\Omega(n_j)}}{w(n_j)}  b_j(n_j)
\sum_{\substack{ l_j|n_j }} \frac{c_{n_j}(l_j)}{\sqrt{l_j}} \Big ) .
\end{split}
\end{align*}

  Noticing that in the sum over $n_1$ above, we have $p_0^{l_0}|n_1$. Thus, upon making a change of variable $n_1 \mapsto p_0^{l_0}n_1$,
  we may recast the expression above as
\begin{align}
\label{sumovern1}
\begin{split}
-\sum_{l_0 \geq 1}
\frac {l_0 \log p_0}{p_0^{l_0}}\Big (\sum_{n_1} \frac{1}{\sqrt{n_1}} \frac{(2k-1)^{\Omega(p_0^{l_0}n_1)}}{w(p_0^{l_0}n_1)}  b_1(p_0^{l_0}n_1)
\sum_{\substack{ (l_1, p_0)=1 \\ l_1|n_1 }} \frac{c_{p_0^{l_0}n_1}(p_0^{l_0}l_1)}{\sqrt{l_1}} \Big )\prod^{\mathcal{J}}_{j=2}
\Big (\sum_{n_j} \frac{1}{\sqrt{n_j}} \frac{(2k-1)^{\Omega(n_j)}}{w(n_j)}  b_j(n_j)
\sum_{\substack{ l_j|n_j }} \frac{c_{n_j}(l_j)}{\sqrt{l_j}} \Big ) .
\end{split}
\end{align}

  We consider the sum above over $n_1$ above. Note that the factor $b_1(p_0^{l_0}n_1)$ restricts $p_0^{l_0}n_1$ to have
  all prime factors in $P_1$ such that $\Omega(p_0^{l_0}n_1) \leq \ell_1$. If we remove the restriction on $\Omega(p_0^{l_0}n_1)$, then the sum becomes
\begin{align*}
\begin{split}
& \Big( \sum_{i=0}^{\infty} \frac{1}{p_0^{i/2}} \frac{(2k-1)^{i+l_0}c_{p_0^{i+l_0}}(p_0^{l_0})}{(i+l_0)!}  \Big) \prod_{\substack{p \in P_1 \\ (p, p_0)=1}} \Big( \sum_{i=0}^{\infty} \frac{1}{p^{i/2}} \frac{(2k-1)^{i}}{i!}\Big ( \sum_{m=0}^{i}
\frac{c_{p^i}(p^m)}{\sqrt{p^{m}}} \Big ) \Big) \\
= & \frac {(2k-1)^{l_0}}{l_0!} \Big( \sum_{i=0}^{\infty} \frac{1}{p_0^{i/2}} \frac{(2k-1)^{i}c_{p_0^{i+l_0}}(p_0^{l_0})}{\binom {i+l_0}{l_0}i!}  \Big) \prod_{\substack{p \in P_1 \\ (p, p_0)=1}} \Big( \sum_{i=0}^{\infty} \frac{1}{p^{i/2}} \frac{(2k-1)^{i}}{i!}\Big ( \sum_{m=0}^{i}
\frac{c_{p^i}(p^m)}{\sqrt{p^{m}}} \Big ) \Big) \\
= & \frac {(2k-1)^{l_0}}{l_0!} \Big( c_{p_0^{l_0}}(p_0^{l_0})+\sum_{i=1}^{\infty} \frac{1}{p_0^{i/2}} \frac{(2k-1)^{i}c_{p_0^{i+l_0}}(p_0^{l_0})}{\binom {i+l_0}{l_0}i!}  \Big) \prod_{\substack{p \in P_1 \\ (p, p_0)=1}} \Big( \sum_{i=0}^{\infty} \frac{1}{p^{i/2}} \frac{(2k-1)^{i}}{i!}\Big ( \sum_{m=0}^{i}
\frac{c_{p^i}(p^m)}{\sqrt{p^{m}}} \Big ) \Big) \\
=& \frac {(2k-1)^{l_0}}{l_0!}\Big(1+O(\frac 1{p_0^{1/2}}) \Big )\prod_{\substack{p\in P_1 \\ (p, p_0)=1 }}\Big (1+ \frac {(2k)^2-1}{2p}+O(\frac 1{p^{3/2}}) \Big ),
\end{split}
\end{align*}
  where the last expression above follows from the observation that
\begin{align}
\label{cvalue}
\begin{split}
  c_p(1)=0, c_p(p)=c_{p^2}(1)=c_{p_0^{l_0}}(p_0^{l_0})=1,
\end{split}
\end{align}
 and the estimations that $\binom {i+l_0}{l_0} \geq 1, \sum^m_{i=0}|c_{p^i}(p^m)| \leq 3^{\Omega(p^i)}$.

   On the other hand, using Rankin's trick by noticing that $2^{\Omega(n_1)+l_0-\ell_1}\ge 1$ if $\Omega(p_0^{l_0}n_1) > \ell_1$,  we see via
Lemma \ref{RS} that the error introduced this way does not exceed
\begin{align*}
\begin{split}
&  2^{-\ell_1}\sum_{i=0}^{\infty} \frac{1}{p_0^{i/2}} \frac{|2(2k-1)|^{i+l_0}|c_{p_0^{i+l_0}}(p_0^{i+l_0})|}{(i+l_0)!}
\prod_{\substack{p \in P_1 \\ (p, p_0)=1}} \Big( \sum_{i=0}^{\infty} \frac{1}{p^{i/2}} \frac{|2(2k-1)|^{i}}{i!}\Big ( \sum_{m=0}^{i}
\frac{|c_{p^i}(p^m)|}{\sqrt{p^{m}}} \Big ) \Big) \\
\ll & 2^{-\ell_1/2}\frac {|2(2k-1)|^{l_0}}{l_0!}\Big(1+O(\frac 1{p_0^{1/2}}) \Big )\prod_{\substack{p\in P_1 \\ (p, p_0)=1 }}
\Big (1+ \frac {(2k)^2-1}{2p}+O(\frac 1{p^{3/2}}) \Big ),
\end{split}
\end{align*}
 where the last estimation above follows from \eqref{sumpj}.

   The above estimations carry out to the sums over other $n_j$ and we see this way that for any $2 \leq j \leq \mathcal{J}$, we have
\begin{align*}
\begin{split}
 & \sum_{n_j} \frac{1}{\sqrt{n_j}} \frac{(2k-1)^{\Omega(n_j)}}{w(n_j)}  b_j(n_j)
\sum_{\substack{ l_j|n_j }} \frac{c_{n_j}(l_j)}{\sqrt{l_j}}  =  \Big (1+O(2^{-\ell_j/2}) \Big )\prod_{\substack{p\in P_j }}\Big (1+ \frac {(2k)^2-1}{2p}+O(\frac 1{p^{3/2}}) \Big ).
\end{split}
\end{align*}

  We then conclude from the above discussions that the expression given in \eqref{sumovern1} is
\begin{align*}
\leq & \sum_{l_0 \geq 1}
\frac {l_0 \log p_0}{p_0^{l_0}l_0!}|2k-1|^{l_0}(1+O(\frac 1{p_0^{1/2}}))\Big(1+O(2^{-\ell_1/2}2^{l_0}) \Big )\prod_{\substack{p\in P_1 \\ (p, p_0)=1 }}
\Big (1+ \frac {(2k)^2-1}{2p}+O(\frac 1{p^{3/2}}) \Big )\\
& \cdot \prod^{\mathcal{J}}_{j=2}\Big (1+O(2^{-\ell_j/2}) \Big )
\prod_{\substack{p\in P_j }}\Big (1+ \frac {(2k)^2-1}{2p}+O(\frac 1{p^{3/2}}) \Big ) \\
\ll & \Big (\frac {\log p_0}{p_0}+O(\frac {\log p_0}{p_0^{3/2}}) \Big ) \prod^{\mathcal{J}}_{j=1}
\prod_{\substack{p\in P_j }}\Big (1+ \frac {(2k)^2-1}{2p}+O(\frac 1{p^{3/2}}) \Big ) \\
\ll &  \Big (\frac {\log p_0}{p_0}+O(\frac {\log p_0}{p_0^{3/2}}) \Big )(\log \kappa)^{ \frac {(2k)^2-1}{2}},
\end{align*}
  where the last estimation above follows from the well-known relation that $1+x \leq e^x$ for all real number
$x$ and Lemma \ref{RS}.

  We sum over $p_0$ and apply Lemma \ref{RS} to see that the expression in \eqref{sumoverlog} is
\begin{align}
\label{Maintermcontri1}
\ll &  10^{-M/4}(\log \kappa)^{ \frac {(2k)^2+1}{2}}.
\end{align}

  On the other hand, the same procedure above implies that the contribution to \eqref{Lfirstmoment} from the other main terms in \eqref{lstapproxk} is
\begin{align}
\label{Maintermcontri2}
\gg & (\log \kappa)^{ \frac {(2k)^2+1}{2}}.
\end{align}

  The assertion of Proposition \ref{Prop4} now follows from \eqref{sumchistar}, \eqref{Maintermcontri1} and \eqref{Maintermcontri2}.

\section{Proof of Proposition \ref{Prop5}}

    For $1 \leq j \leq \mathcal{J}$, we define the function $p_{j}(n)$ such that $p_{j}(n)=0$ or $1$,
and that $p_{j}(n)=1$ if and only if $\Omega(n)=r_k\ell_{j}$ and all the prime factors of $n$ are from the interval $P_{j}$.
Using this together with the notations in Section \ref{sec 4} and recalling the definition of $c_k, r_k$ given in \eqref{r_k}, we see that
\begin{align*}
  {\mathcal Q}_{j}(f, 2k) =&  \Big( \frac{c_k  }{\ell_{j}}\Big)^{r_k\ell_{j}}\sum_{ \substack{ n_{j}}}
\frac{1}{\sqrt{n_{j}}}\frac{(r_k\ell_{j})!
  }{w(n_{j})}\widetilde{\lambda}(n_j) p_{j}(n_{j}).
\end{align*}

  Note that ${\mathcal
 Q}_{j}(f, 2k)$ is a short Dirichlet polynomial whose length does not exceed
$(\kappa^{\alpha_j})^{r_k\ell_j}=\kappa^{r_k\alpha_j\ell_j}$.
  Also, we have by \eqref{Stirling},
\begin{align}
\label{lv1est}
 \Big( \frac {c_k }{\ell_{j}}\Big)^{r_k\ell_{j}}(r_k\ell_{j})!  \ll (r_k\ell_{j}) \Big( \frac{c_k r_k }{e } \Big)^{r_k\ell_{j}}.
\end{align}

   We apply the above together with \eqref{Nexpression} and \eqref{xybounds} to see that there exists a constant $C(k)$ depending on $k$ only
such that for any $1 \leq j \leq \mathcal{J}$, we have for some $y_{n_j} \ll C(k)^{\ell_j}$ with $n_j \leq \kappa^{r_k\alpha_j\ell_j}$,
\begin{align*}
 {\mathcal N}_j(f, 2k)+{\mathcal
 Q}_{j}(f, 2k)= \sum_{n_j} \frac{y_{n_j}}{\sqrt{n_j}}\widetilde{\lambda}(n_j).
\end{align*}

  We thus deduce that we may write for simplicity that
\begin{align}
\label{sumN}
\begin{split}
 \mathop{{\sum}^{h}}\limits_{f\in H_{\kappa}} \prod^{\mathcal{J}}_{j=1} \Big ( {\mathcal N}_j(f, 2k) +  {\mathcal Q}_j(f, 2k)   \Big ) =\mathop{{\sum}^{h}}\limits_{f\in H_{\kappa}}  \sum_{n}
\frac {y_n}{\sqrt{n}}\widetilde{\lambda}(n)=\sum_{n}
\frac {y_n}{\sqrt{n}}\sum_{t|n}c_{n}(t)\mathop{{\sum}^{h}}\limits_{f\in H_{\kappa}} \lambda_f(t^2),
\end{split}
\end{align}
  where $n  \leq \kappa^{r_k\sum^{\mathcal{J}}_{j=1}\alpha_j\ell_j} \leq \kappa^{40r_ke^A10^{-M/4}}$ by \eqref{sumoverell}. We also derive from \eqref{sumoverell} that
\begin{align*}
  y_n \ll C(k)^{\sum^{\mathcal{J}}_{j=1}\ell_j}  \ll \kappa^{\varepsilon}.
\end{align*}

   We now apply \eqref{Deltaest} to evaluate the last sum on the right-hand side expression in \eqref{sumN} to see that, upon choosing $M$ large enough, the contribution from
the error term in \eqref{Deltaest} to the right-hand side expression in \eqref{sumN} is
\begin{align*}
 \ll & \sum_{n  \leq \kappa^{40r_ke^A10^{-M/4}}} \frac{y_n}{\sqrt{n}}\sum_{t|n}\big| c_n(t)\big|e^{-\kappa} \ll
 \sum_{n  \leq \kappa^{2r_k/10^{M/8}}} \kappa^{\varepsilon}\sum_{t|n}\big| c_n(t)\big|e^{-\kappa}  \ll \kappa^{-1/4},
\end{align*}
 where the last estimation above follows from \eqref{cnt}.

   It therefore remains to consider the contribution to the right-hand side expression in \eqref{sumN} from the main term in \eqref{Deltaest}, which equals to
\begin{align}
\label{sumNmain}
\begin{split}
 \sum_{n} \frac {y_n}{\sqrt{n}}c_{n}(1)=\prod^{\mathcal{J}}_{j=1}(\frac {y_{n_j}}{\sqrt{n_j}}c_{n_j}(1))=
\prod^{\mathcal{J}}_{j=1} \Big (  \sum_{n_j} \frac{(2k)^{\Omega(n_j)}}{\sqrt{n_j} w(n_j)}  b_j(n_j)c_{n_j}(1)
 + \Big( \frac{c_k  }{\ell_{j}}\Big)^{r_k\ell_{j}}\sum_{ \substack{ n_{j}}}
\frac{1}{\sqrt{n_{j}}}\frac{(r_k\ell_{j})!
  }{w(n_{j})}c_{n_j}(1) p_{j}(n_{j})\Big ).
\end{split}
\end{align}

  Arguing as in the proof of Proposition \ref{Prop4}, we see that
\begin{align}
\label{sqinN}
\begin{split}
  \sum_{n_j} \frac{(2k)^{\Omega(n_j)}}{\sqrt{n_j} w(n_j)}  b_j(n_j)c_{n_j}(1)=\Big(1+ O\big(2^{-\ell_j/2} \big ) \Big)
  \exp (\sum_{\substack{p \in P_j }} \frac {(2k)^2}{2p}+ O(\sum_{p \in P_j} \frac {1}{p^{3/2}})).
\end{split}
\end{align}

  Similarly, we notice that $(c_kr_k)^{\Omega(n_j)-r_k\ell_j}\ge 1$ when $\Omega(n_j) =r_k \ell_j$. It follows from this and \eqref{lv1est} that
\begin{align}
\label{Qest}
\begin{split}
 & \Big( \frac{c_k  }{\ell_{j}}\Big)^{r_k\ell_{j}}\sum_{ \substack{ n_{j}}}
\frac{1}{\sqrt{n_{j}}}\frac{(r_k\ell_{j})!
  }{w(n_{j})}c_{n_j}(1) p_{j}(n_{j})  \ll  \Big( \frac{c_k  }{\ell_{j}}\Big)^{r_k\ell_{j}}\sum_{ \substack{ n_{j}}}
\frac{(c_kr_k)^{\Omega(n_j)-r_k\ell_j}}{\sqrt{n_{j}}}\frac{(r_k\ell_{j})!
  }{w(n_{j})}|c_{n_j}(1)| \\
 \ll & r_k\ell_je^{-r_k\ell_j}
 \prod_{\substack{p \in P_j }}\Big (1+\frac {(c_kr_k)^2}{2p}+O(\frac 1{p^{3/2}}) \Big ),
\end{split}
\end{align}
 where the last estimation above follows from \eqref{cvalue} and the estimation that $|c_{p^n}(1)| \leq 3^n$.

 Further applying the estimation $1+x \leq e^x$, Lemma \ref{RS} and \eqref{sumpj}, we see that
\begin{align}
\label{Qest1}
\begin{split}
 & r_k\ell_je^{-r_k\ell_j}
 \sum_{\substack{p \in P_j }}\Big (1+\frac {(c_kr_k)^2}{2p}+O(\frac 1{p^{3/2}}) \Big )
\ll    r_k\ell_je^{-r_k\ell_j} \exp\Big ( \sum_{\substack{p \in P_j }}\frac {(c_kr_k)^2}{2p}+O(\sum_{p \in P_j} \frac {1}{p^{3/2}}) \Big ) \\
\ll &  2^{-\ell_j/2}  \exp (\sum_{\substack{p \in P_j }} \frac {(2k)^2}{2p}+ O(\sum_{p \in P_j} \frac {1}{p^{3/2}})).
\end{split}
\end{align}

  Substituting \eqref{sqinN}-\eqref{Qest1} into \eqref{sumNmain}, we see that
\begin{align*}
\begin{split}
 \sum_{n} \frac {y_n}{\sqrt{n}}c_{n}(1)=\prod^{\mathcal{J}}_{j=1}\Big( \big (1+ O\big(2^{-\ell_j/2} \big ) \big )
  \exp (\sum_{\substack{p \in P_j }} \frac {(2k)^2}{2p}+ O(\sum_{p \in P_j} \frac {1}{p^{3/2}})) \Big) \ll (\log \kappa)^{\frac {(2k)^2}{2}}.
\end{split}
\end{align*}

  This completes the proof of Proposition \ref{Prop5}.

\section{Proof of Proposition \ref{Prop6}}
\label{sec 5}

\subsection{Initial treatment}
\label{sec 6.1}

  In the course of proving Proposition \ref{Prop6}, we need to first establish some weak upper bounds for moments of the family of symmetric square $L$-functions in this section, following the treatments in \cite{Sound2009}.  We denote
\begin{align*}
  \mathcal{N}(V)=\mathop{{\sum}^{h}}\limits_{\substack{f \in H_{\kappa} \\ \log\frac {|L(\frac 12, \operatorname{sym}^2 f)|}{\sqrt{\log \kappa}}\geq V }}1.
\end{align*}
 Our next result gives an upper bound for $\mathcal{N}(V)$.
\begin{prop}
\label{propNbound}
 With the notation as above and assuming the truth of GRH for $L(s, \operatorname{sym}^2 f)$. Let $k>0$ be a fixed real number.
  For $\log \log \kappa <V \leq   \frac{6\log \kappa}{\log \log \kappa}$, we have
 \begin{align*}
  \mathcal{N} (V)\ll (\log \kappa)^{9e^{16(2k+1)}+1}e^{-(2k+1)V}.
 \end{align*}
\end{prop}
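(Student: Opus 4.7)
The plan is to adapt the Soundararajan argument from \cite{Sound2009}, using Lemma \ref{lem: logLbound1} to replace the tail event on $|L(\tfrac12,\operatorname{sym}^2 f)|$ by a lower bound on a short prime Dirichlet polynomial, and then bounding the count via high harmonic moments computed through the Petersson trace formula (Lemma \ref{LemDeltaest}). First I would choose $x=\kappa^{A/V}$ with $A=A(k)$ a large constant, ensuring both that $(1+\lambda_0)\log\kappa/\log x = (1+\lambda_0)V/A$ is a small fraction of $V$ and that $x\geq(\log\kappa)^{12}$, the hypothesis needed for \eqref{equ:3.3'} (this second requirement uses $V\leq 6\log\kappa/\log\log\kappa$). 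Since $\tfrac12\log\log x=\tfrac12\log\log\kappa+O(\log\log\log\kappa)$, the event $\log(|L(\tfrac12,\operatorname{sym}^2 f)|/\sqrt{\log\kappa})\geq V$ forces
\begin{equation*}
S(f):=\sum_{p\leq x}\frac{\lambda_f(p^2)}{p^{1/2+\lambda_0/\log x}}\frac{\log(x/p)}{\log x}\ \geq\ V_0:=V\Bigl(1-\tfrac{1+\lambda_0}{A}\Bigr)-O(\log\log\log\kappa).
\end{equation*}

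Next I would split $S(f)=S_1(f)+S_2(f)$ at a cutoff $z\in(1,x)$, so that $S(f)\geq V_0$ implies either $S_1(f)\geq V_1$ or $S_2(f)\geq V_2$ with $V_1+V_2=V_0$; Markov's inequality then yields
\begin{equation*}
\mathcal{N}(V)\leq V_1^{-2q_1}\mathop{{\sum}^{h}}_{f\in H_\kappa}S_1(f)^{2q_1}+V_2^{-2q_2}\mathop{{\sum}^{h}}_{f\in H_\kappa}S_2(f)^{2q_2}
\end{equation*}
for even integers $2q_1,2q_2$. To evaluate each harmonic moment I would expand $S_j(f)^{2q_j}$ as a sum over $2q_j$-tuples of primes, apply the Hecke relation \eqref{lambdamult} iteratively to rewrite each product $\prod_i\lambda_f(p_i^2)$ as a non-negative $\mathbb{Z}$-linear combination of individual $\lambda_f(m)$, and then invoke Lemma \ref{LemDeltaest} under the constraints $q_1\log z\leq\tfrac12\log\kappa$ and $q_2\log x\leq\tfrac12\log\kappa$. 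After Petersson kills off-diagonal terms, a standard pairing argument yields a Gaussian-type bound
\begin{equation*}
\mathop{{\sum}^{h}}_{f\in H_\kappa}S_j(f)^{2q_j}\ll\frac{(2q_j)!}{q_j!\,2^{q_j}}\Bigl(\sum_{p\in I_j}\frac{c_p}{p}\Bigr)^{q_j},
\end{equation*}
with $c_p$ an absolutely bounded Hecke coefficient; Lemma \ref{RS} then controls the inner sum. To obtain the claimed exponent, I would take $q_2=\lceil 2k+1\rceil$, a bounded integer, so that $V_2^{-2q_2}$ directly produces the exponential factor $e^{-(2k+1)V}$ after the polynomial-in-$V$ powers are absorbed using $V\leq 6\log\kappa/\log\log\kappa$; for $S_1$ I would take $q_1$ as large as the constraint permits to push its Gaussian tail well below $e^{-(2k+1)V}$.

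The main obstacle I anticipate is tuning the parameters $A$, $z$, and the split $V_1+V_2=V_0$ so that the polylogarithmic prefactor collapses to exactly $(\log\kappa)^{9e^{16(2k+1)}+1}$; the doubly exponential constant $e^{16(2k+1)}$ ultimately arises from tracking the $(2q_2)!/q_2!$ factor and the inflation of coefficients in the conversion \eqref{lambdamult}, controlled via $|\lambda_f(m)|\leq d(m)$. A secondary but unavoidable technicality is checking that the off-diagonal error $e^{-\kappa}$ in Lemma \ref{LemDeltaest} remains negligible given the length of the expanded polynomials, which is precisely what forces the constraints on $q_1$ and $q_2$ above and motivates the choice $x=\kappa^{A/V}$ rather than a larger $x$.
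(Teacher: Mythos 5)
Your reduction step and overall framework (pass to the prime sum via \eqref{equ:3.3'} with $x=\kappa^{\Theta(1/V)}$, then bound the harmonic measure by an even moment evaluated through Lemma \ref{LemDeltaest}) match the paper's. But the mechanism you propose for extracting the factor $e^{-(2k+1)V}$ does not work. With $q_2=\lceil 2k+1\rceil$ a \emph{bounded} integer, Markov gives at best $V_2^{-2q_2}$ times a quantity that is polynomial in $\log\log\kappa$; since $V_2\asymp V$, this is only a fixed negative power of $V$, and no amount of absorbing "polynomial-in-$V$ powers" turns that into $e^{-(2k+1)V}$, which for $V$ near $6\log\kappa/\log\log\kappa$ is smaller than any fixed power of $1/\log\kappa$. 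The companion $S_1$-part has a dual problem: to push the Gaussian tail $\bigl(cq_1\log\log\kappa/V_1^2\bigr)^{q_1}$ below $e^{-(2k+1)V}$ you need $q_1\gg V^2/\log\log\kappa$, but the Petersson constraint $q_1\log z\leq\tfrac12\log\kappa$ then forces $\log z\ll(\log\log\kappa)^3/\log\kappa$ at the top of the $V$-range, i.e.\ $z<2$, so the split degenerates. In short, the two-range Soundararajan splitting with one bounded and one large exponent cannot cover the stated range of $V$ uniformly.

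What the paper does instead is take a \emph{single} moment of the full sum $T$ of order $2m$ with $m=\lceil V/16\rceil-1$ (so that the resulting Dirichlet polynomial has length $\ll\kappa^{16m/V}<\kappa^{2}/10^4$ and Lemma \ref{LemDeltaest} applies), and then, in the diagonal term, inserts the Rankin weight $(e^{8(2k+1)})^{\Omega(n)-2m}\geq 1$. It is this weight --- not a factorial or a threshold power --- that manufactures the decay: the factor $e^{-8(2k+1)\cdot 2m}\approx e^{-(2k+1)V}$ comes out of the weight, while its square $e^{16(2k+1)}$ enters the Euler product at the $p^2$-terms (where $|c_{p^2}(1)|\leq 3^2=9$), producing exactly the prefactor $(\log\kappa)^{9e^{16(2k+1)}}$. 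Your diagnosis that $e^{16(2k+1)}$ comes from $(2q_2)!/q_2!$ and coefficient inflation in \eqref{lambdamult} is therefore also off the mark. If you want to salvage your route, you would have to take the moment order proportional to $V$ throughout and find some other device to convert the Gaussian-type bound into $e^{-(2k+1)V}$ for $V$ only slightly larger than $\log\log\kappa$; the Rankin-weight trick is precisely that device.
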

\begin{proof}
  We apply \eqref{equ:3.3'} by setting $x=\kappa^{4/V}$ there and we denote $T$ for the sum in \eqref{equ:3.3'}. We then deduce that
\[
 \log\frac {|L(\frac 12, \operatorname{sym}^2 f)|}{\sqrt{\log \kappa}}
    \leq T+ (1+\lambda_0)\frac {V}{4} + O(\log \log \log \kappa).
\]
 As $\lambda_0<1$, this implies that if  $\log\frac {|L(\frac 12, \operatorname{sym}^2 f)|}{\sqrt{\log \kappa}} \geq V $, then we have
$T \geq \tfrac{V}{2}$.

Now, we define the harmonic measure of $T$ to be
\begin{align*}
\operatorname{meas}(T) & : = \mathop{{\sum}^{h}}\limits_{\substack{f \in H_{\kappa} \\ T \geq V/2}}1.
\end{align*}

  We keep the notations in Section \ref{sec 4} and we further define a totally multiplicative function $s(n, x)$ such that at primes $p$, we have
\begin{align*}
\begin{split}
  s(p,x)= \frac{1}{p^{\lambda_0/\log x}}\frac{\log (x/p)}{\log x}.
\end{split}
\end{align*}
  Note that we have $|s(n, x)| \leq 1$ for any $n$ whose prime factors do not exceed $x$. We then deduce by \eqref{lambdaf} that
\begin{align}
\label{M2bound}
\begin{split}
 &   \mathcal{N} (V) \leq \operatorname{meas}(T) \leq  (\frac {2}{V})^{2m}\mathop{{\sum}^{h}}\limits_{\substack{f \in H_{\kappa}}}|T|^{2m}
= (\frac {2}{V})^{2m}\mathop{{\sum}^{h}}\limits_{\substack{f \in H_{\kappa}}}\sum_{ \substack{ n \\ \Omega(n) = 2m \\ p|n \implies
p \leq x}}
\frac{(2m )!s(n,x)}{\sqrt{n}}\frac{1
  }{w(n)}\sum_{t|n}c_n(t)\lambda_f(t^2).
\end{split}
\end{align}

   Note that the largest $t^2$ appearing on the right-hand side expression above does not exceed
\begin{align*}
  n^2 \ll x^{4m}  \ll \kappa^{16m/V}.
\end{align*}

   We now take $m=\lceil V/16 \rceil-1$ to ensure that $16m/V<2$ so that we can apply \eqref{Deltaest} to evaluate the last sum in \eqref{M2bound} to see by the above discussions that the contribution from the error term in \eqref{Deltaest} is
\begin{align}
\label{MeasureMerror}
\begin{split}
  \ll &  e^{-\kappa}(\frac {2}{V})^{2m} \sum_{ \substack{ n \\ \Omega(n) = 2m \\ p|n \implies
p \leq x}}
\frac{(2m )!s(n,x)}{\sqrt{n}}\frac{1
  }{w(n)}\sum_{t|n}|c_n(t)|\\
\ll &  e^{-\kappa}(\frac {2}{V})^{2m} \big(\sum_{ p \leq x} \frac{3}{\sqrt{p}} \big )^{2m} \\
\ll &   e^{-\kappa}(3x^{1/2})^{2m} \\
\ll & \operatorname{exp}\left(-(2k+1)V\right).
\end{split}
\end{align}

  Meanwhile, using Rankin's trick that $(e^{8(2k+1)})^{\Omega(n)-2m } \geq 1$ when $\Omega(n)=2m$, the contribution from the main term in \eqref{Deltaest} is
\begin{align}
\label{MeasureMmain}
\begin{split}
   &  (\frac {2}{V})^{2m}\sum_{ \substack{ n \\ \Omega(n) = 2m \\ p|n \implies
p \leq x}}
\frac{(2m )!s(n,x)}{\sqrt{n}}\frac{1
  }{w(n)}c_n(1) \\
\ll &   (\frac {2}{V})^{2m} (2m )! \sum_{ \substack{ n \\  p|n \implies
p \leq x}}
\frac{(e^{8(2k+1)})^{\Omega(n)-2m }c_n(1)}{w(n)\sqrt{n}} \\
  \ll & (2m )\big( \frac {4m}{e^{8(2k+1)+1}V} \big )^{2m}\Big (\prod_{p \leq x } \big ( \sum^{\infty}_{i=0}\frac {e^{8(2k+1)i} c_{p^i}(1)}{i!p^{i/2}}\big)\Big ),
\end{split}
\end{align}
 where the last estimation above follows from \eqref{Stirling}  and the observation that $c_n(1)$ is multiplicative. Further using the observation $c_p(1)=0$ and the estimation that $|c_n(1)| \leq 3^{\Omega(n)}$ and that $1+x \leq e^x$, we see that
\begin{align*}
\begin{split}
 \sum^{\infty}_{i=0}\frac {e^{8(2k+1)i}\cdot |c_{p^i}(1)|}{i!p^{i/2}} \ll & \exp (\frac {9e^{16(2k+1)}}{2p}).
\end{split}
\end{align*}
 We apply the above estimation and Lemma \ref{RS} to see that the last expression in  \eqref{MeasureMmain} is
\begin{align*}
 \ll V e^{-(2k+1)V}\exp (\prod_{p \leq x } \frac {9e^{16(2k+1)}}{2p} ) \ll V (\log \kappa)^{9e^{16(2k+1)}}e^{-(2k+1)V} \ll (\log \kappa)^{9e^{16(2k+1)}+1}e^{-(2k+1)V},
\end{align*}
 where the last estimation above follows by noticing that $V \leq \log \kappa$.

  The assertion of the proposition now follows from \eqref{MeasureMerror} and the above.
\end{proof}

  Now, Proposition \ref{propNbound} allows us to establish the following weak upper bounds for moments of $L$-functions concerned in the paper.
\begin{prop}
\label{prop: upperbound}
Assuming the truth of GRH for $L(f, \operatorname{sym}^2 f)$. For any positive real number $k$ and any $\varepsilon>0$, we have for large $\kappa$,
\begin{align*}
    \mathop{{\sum}^{h}}\limits_{f\in H_{\kappa}}|L(\frac 12, \operatorname{sym}^2 f)|^{2k}  \ll_k & (\log \kappa)^{O_k(1)}.
\end{align*}
\end{prop}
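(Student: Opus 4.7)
The plan is to deduce Proposition \ref{prop: upperbound} from Proposition \ref{propNbound} by the standard layer-cake argument of Soundararajan. Set $M_f = \log(|L(\frac 12,\operatorname{sym}^2 f)|/\sqrt{\log\kappa})$, with $M_f = -\infty$ when the central value vanishes, so that $|L(\frac 12,\operatorname{sym}^2 f)|^{2k} = (\log\kappa)^k e^{2kM_f}$. Writing $e^{2kM_f} = 2k\int_{-\infty}^{M_f} e^{2kV}\,dV$ and interchanging the (positive) harmonic sum with the integral gives
\[
\mathop{{\sum}^{h}}_{f\in H_\kappa} |L(\tfrac 12,\operatorname{sym}^2 f)|^{2k} \;=\; 2k(\log\kappa)^k \int_{-\infty}^{\infty} e^{2kV}\,\mathcal{N}(V)\,dV,
\]
so the proof reduces to bounding this integral.

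I would split the integration range into three pieces. On $V \leq \log\log\kappa$ the trivial bound $\mathcal{N}(V) \leq \mathop{{\sum}^{h}}_{f\in H_\kappa} 1 \ll 1$ (furnished by Lemma \ref{LemDeltaest} with $m=n=1$) makes the contribution $\ll (\log\kappa)^{2k}$. On $\log\log\kappa < V \leq 6\log\kappa/\log\log\kappa$, applying Proposition \ref{propNbound} with its free parameter taken equal to the moment exponent $k$ yields $e^{2kV}\mathcal{N}(V) \ll (\log\kappa)^{9e^{16(2k+1)}+1}e^{-V}$, so this range contributes $\ll (\log\kappa)^{9e^{16(2k+1)}}$. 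For the tail $V > 6\log\kappa/\log\log\kappa$, I would show that $\mathcal{N}(V)=0$: taking $x = (\log\kappa)^{2}$ in \eqref{logLupperbound} and using \eqref{alpha} together with Lemma \ref{RS} to bound the prime sum trivially gives the Littlewood-type pointwise estimate $\log|L(\frac 12,\operatorname{sym}^2 f)| \ll \log\kappa/\log\log\kappa$ under GRH, whence $M_f \ll \log\kappa/\log\log\kappa$, which is smaller than $V$ throughout this range.

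Assembling the three contributions and multiplying by $(\log\kappa)^k$ yields a bound of the form $(\log\kappa)^{O_k(1)}$, as required. I do not anticipate any serious obstacle, since Proposition \ref{propNbound} already carries the entire analytic content of the argument; the only remaining ingredient is the pointwise GRH estimate used to truncate the tail, which is a routine consequence of the tools assembled in Section \ref{sec 2}.
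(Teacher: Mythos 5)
Your proposal is correct and follows essentially the same route as the paper: the layer-cake identity, the trivial bound $\mathcal{N}(V)\ll 1$ from Lemma \ref{LemDeltaest} for small $V$, Proposition \ref{propNbound} on the middle range, and a pointwise Littlewood-type GRH bound to show $\mathcal{N}(V)=0$ beyond $6\log\kappa/\log\log\kappa$. The only (harmless) difference is that you truncate the tail via \eqref{logLupperbound} with $x=(\log\kappa)^2$ while the paper uses \eqref{equ:3.3'} with $x=\log\kappa$; if anything your choice is cleaner, since \eqref{logLupperbound} is valid for all $x\ge 2$ whereas Lemma \ref{lem: logLbound1} is stated for $x\ge(\log\kappa)^{12}$.
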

\begin{proof}
  Note that
\begin{align*}
  \mathop{{\sum}^{h}}\limits_{f\in H_{\kappa}}|L(\frac 12, \operatorname{sym}^2 f)|^{2k} =& -(\log \kappa)^{k} \int_{-\infty}^{+\infty}\operatorname{exp}(2kV)d \mathcal{N}(V)
  = 2k(\log \kappa)^{k} \int_{-\infty}^{+\infty}\operatorname{exp}(2kV)\mathcal{N}(V)dV.
\end{align*}
   As $N(V) \ll 1$ by \eqref{Deltaest}, we see that
\begin{align*}
 2k(\log \kappa)^{k} \int_{-\infty}^{10\log \log \kappa}\operatorname{exp}(2kV)\mathcal{N}(V)dV \ll (\log \kappa)^{k} \int_{-\infty}^{10\log \log \kappa}\operatorname{exp}(2kV) dV \ll (\log \kappa)^{O_k(1)}.
\end{align*}
  We may thus assume that $\log \log \kappa \leq V$ from now on.
   By taking $x = \log \kappa$ in \eqref{equ:3.3'} and bounding the sum over $p$ in \eqref{equ:3.3'} trivially, we see that $\mathcal{N} (V) =0$  for $V > \frac{6\log \kappa}{\log \log \kappa} $. Thus, we can also assume that $V \leq   \frac{6\log \kappa}{\log \log \kappa}$. We then apply Proposition \ref{propNbound} to see that the assertion of the proposition follows.
\end{proof}

\subsection{Completion of the proof}

    We keep the notations in Section \ref{sec 2'} and we set $x=\kappa^{\alpha_j}$ in \eqref{logLupperboundsimplified} to deduce that
\begin{align}
\label{basicest}
\begin{split}
 & \log |L(\frac 12, \operatorname{sym}^2 f)| \le \sum^{j}_{l=1} {\mathcal M}_{l,j}(f)+\sum_{0 \leq m \leq \frac{\log\log \kappa}{\log 2}} P_{m}(f)+\frac 12 \log \log \kappa+ 2\alpha^{-1}_j+O(1),
\end{split}
\end{align}
  where we set
\[ {\mathcal M}_{l,j}(f) = \sum_{p\in P_l}\frac{\lambda_f(p^2)}{\sqrt{p}}s(p, \kappa^{\alpha_j}), \quad 1\leq l \leq j \leq \mathcal{J} , \]
and
\[ P_{m}(f )=  \sum_{2^{m} < p \leq 2^{m+1}} \frac{\alpha^{4}_{f}(p, 1)+\alpha^{4}_{f}(p, 2)}{2p} , \quad  0 \leq m \leq \frac{\log\log \kappa}{\log 2}. \]

  We also define the following sets:
\begin{align*}
  \mathcal{S}(0) =& \{ f \in H_{\kappa} : |{\mathcal M}_{1,l}(f)| > \frac {\ell_{1}}{10^3} \; \text{ for some } 1 \leq l \leq \mathcal{J} \} ,   \\
 \mathcal{S}(j) =& \{ f \in H_{\kappa} : |{\mathcal M}_{m,l}(f)| \leq
 \frac {\ell_{m}}{10^3}  \; \forall 1 \leq m \leq j, \; \forall m \leq l \leq \mathcal{J}, \\
 & \;\;\;\;\; \text{but }  |{\mathcal M}_{j+1,l}(f)| > \frac {\ell_{j+1}}{200} \; \text{ for some } j+1 \leq l \leq \mathcal{J} \} ,  \quad  1\leq j \leq \mathcal{J}, \\
 \mathcal{S}(\mathcal{J}) =& \{f \in H_{\kappa} : |{\mathcal M}_{m,
\mathcal{J}}(f)| \leq \frac {\ell_{m}}{10^3} \; \forall 1 \leq m \leq \mathcal{J}\}, \\
\mathcal{P}(m) =&  \left\{ f \in H_{\kappa}  :  |P_{m}(f)| > 2^{-m/10} , \; \text{but} \; |P_{n}(f)| \leq 2^{-n/10} \; \mbox{for all} \; m+1 \leq n \leq \frac{\log\log \kappa}{\log 2} \right\}.
\end{align*}

  Note that we have
$|P_{n}(f)| \leq 2^{-n/10}$ for all $n$ if $f \not \in \mathcal{P}(m)$ for any $m$, so that we have by \eqref{alpha},
\[ \sum_{\substack{  p \leq \log \kappa }} \frac{\alpha^{4}_{f}(p, 1)+\alpha^{4}_{f}(p, 2)}{2p} = O(1). \]
 As the treatment for the case $f \not \in \mathcal{P}(m)$ for any $m$ is easier compared to the other cases, we may assume that $f \in \mathcal{P}(m)$ for some $m$.   We then observe that
\begin{align*}
 \mathop{{\sum}^{h}}\limits_{\substack{f \in \mathcal{P}(m) }} 1 \leq  \mathop{{\sum}^{h}}\limits_{\substack{f \in H_{\kappa}}}
\Big (2^{m/10} | P_m(f)| \Big )^{2\lceil 2^{m/2}\rceil }.
\end{align*}

   Note that by \eqref{alpha} and \eqref{lambdafpnu}, we have
\begin{align*}
 \alpha^{4}_{f}(p, 1)+\alpha^{4}_{f}(p, 2)=\lambda_f(p^4)-\lambda_f(p^2).
\end{align*}
We define $\eta(n)$ to be the completely multiplicative function such that $\eta(p)=\lambda_f(p^4)-\lambda_f(p^2)$ on primes $p$. Similar to our discussion on Section \ref{sec 4}, we see that
\begin{align}
\label{eta}
 \eta(n)=\sum_{t|n^2}d_n(t)\lambda_f(t^2),
\end{align}
 where $d_n(t) \in \mr$ satisfying
\begin{align}
\label{dnt}
 \sum_{t|n^2}|d_n(t)| \leq 8^{\Omega(n)}.
\end{align}
It follows that we have
\begin{align}
\label{Pexpression}
\begin{split}
 \Big (2^{m/10} | P_m(f)| \Big )^{2\lceil 2^{m/2}\rceil } =&  2^{m\lceil 2^{m/2}\rceil/5 }\sum_{ \substack{ n \\ \Omega(n) = 2\lceil 2^{m/2}\rceil \\ p|n \implies
2^m <p \leq 2^{m+1}}}
\frac{1}{n}\frac{(2\lceil 2^{m/2}\rceil )!
  }{2^{\Omega(n)}w(n)}\eta(n)\\
= & 2^{m\lceil 2^{m/2}\rceil/5 }\sum_{ \substack{ n \\ \Omega(n) = 2\lceil 2^{m/2}\rceil \\ p|n \implies
2^m <p \leq 2^{m+1}}}
\frac{1}{n}\frac{(2\lceil 2^{m/2}\rceil )!
  }{2^{\Omega(n)}w(n)}\sum_{t|n^2}d_n(t)\lambda_f(t^2).
\end{split}
\end{align}

   Note that the largest $t^2$ appearing on the right-hand side expression above does not exceed
\begin{align*}
  n^4 \ll (2^{m+1})^{8\lceil 2^{m/2}\rceil} \ll \kappa.
\end{align*}

  We now apply \eqref{Deltaest} to evaluate the last sum in \eqref{Pexpression} to see by the above discussions that the contribution from the error term in \eqref{Deltaest} is
\begin{align*}
\begin{split}
  \ll &  e^{-\kappa}2^{m\lceil 2^{m/2}\rceil/5 } \sum_{ \substack{ n \\ \Omega(n) = 2\lceil 2^{m/2}\rceil \\ p|n \implies
2^m <p \leq 2^{m+1}}}
\frac{(2\lceil 2^{m/2}\rceil )!}{n}\frac{1
  }{2^{\Omega(n)}w(n)}\sum_{t|n^2}|d_n(t)| \\
\ll &  e^{-\kappa}2^{m\lceil 2^{m/2}\rceil/5 } \big(\sum_{2^{m} < p \leq 2^{m+1}} \frac{4}{p} \big )^{2\lceil 2^{m/2}\rceil} \\
\ll & 2^{-2^{m/2}}.
\end{split}
\end{align*}

  Meanwhile, the contribution from the main term in \eqref{Deltaest} is
\begin{align*}
\begin{split}
  \ll &  2^{m\lceil 2^{m/2}\rceil/5 } \sum_{ \substack{ n \\ \Omega(n) = 2\lceil 2^{m/2}\rceil \\ p|n \implies
2^m <p \leq 2^{m+1}}}
\frac{(2\lceil 2^{m/2}\rceil )!}{n}\frac{1
  }{2^{\Omega(n)}w(n)}|d_n(1)| \\
\ll &  2^{m\lceil 2^{m/2}\rceil/5 } (2\lceil 2^{m/2}\rceil )! \sum_{ \substack{ n \\  p|n \implies
2^m <p \leq 2^{m+1}}}
\frac{(2^{7m/10})^{\Omega(n)-2\lceil 2^{m/2}\rceil }|d_n(1)|}{n}\frac{1
  }{2^{\Omega(n)}w(n)} \\
  \ll & (2\lceil 2^{m/2}\rceil )\big( \frac {2\lceil 2^{m/2}\rceil }{2^{7m/10} e} \big )^{2\lceil 2^{m/2}\rceil} (2^{m/5})^{\lceil 2^{m/2}\rceil }\Big (\prod_{2^m < p <2^{m+1} } \big ( \sum^{\infty}_{i=0}\frac {2^{7mi/10}|d_{p^i}(1)|}{i!(2p)^i}\big)\Big ),
\end{split}
\end{align*}
 where the last estimation above follows from \eqref{Stirling}  and the observation that $d_n(1)$ is multiplicative. Further using the estimation that $1+x \leq e^x$ and $d_p(1)=0, |d_n(1)| \leq 8^{\Omega(n)}$, we see that the last expression above is
\begin{align}
\label{Pmest}
\begin{split}
\ll & (2\lceil 2^{m/2}\rceil )\big( \frac {2\lceil 2^{m/2}\rceil }{2^{7m/10} e} \big )^{2\lceil 2^{m/2}\rceil} (2^{m/5})^{\lceil 2^{m/2}\rceil }\exp \Big (\sum_{2^m < p <2^{m+1} } \frac {2^{7m/5}}{2p^2}+O(\frac {2^{21m/10}}{p^3}) \Big )\ll   2^{- 2^{m/2}}.
\end{split}
\end{align}

  We then apply H\"older's inequality to see that when $2^{m} \geq (\log\log \kappa)^{3}$,
\begin{align*}
 & \sum_{(\log\log \kappa)^3 \leq 2^m \leq \log \kappa} \mathop{{\sum}^{h}}\limits_{\substack{f \in \mathcal{P}(m) }}|L(\frac 12, \operatorname{sym}^2 f)|^{2} \mathcal{N}(f, 2k-2)  \\
\leq & \sum_{(\log\log \kappa)^3 \leq 2^m \leq \log \kappa} \Big ( \mathop{{\sum}^{h}}\limits_{\substack{f \in \mathcal{P}(m) }}1  \Big )^{1/4} \Big (
 \mathop{{\sum}^{h}}\limits_{\substack{f\in H_{\kappa}}}|L(\frac 12, \operatorname{sym}^2 f)|^{8}  \Big )^{1/4} \Big ( \mathop{{\sum}^{h}}\limits_{\substack{f\in H_{\kappa}}}\mathcal{N}(f, 2k-2)^{2}  \Big )^{1/2}.
\end{align*}

  Similar to the proof of Proposition \ref{Prop5}, we have that
\begin{align}
\label{N2k2bound}
&   \mathop{{\sum}^{h}}\limits_{\substack{f\in H_{\kappa}}} \mathcal{N}(f, 2k-2)^2  \ll ( \log \kappa  )^{O(1)}.
\end{align}

  Also, note that by Proposition \ref{prop: upperbound}, we have under GRH,
\begin{align}
\label{L8upperbound}
\mathop{{\sum}^{h}}\limits_{\substack{f\in H_{\kappa}}}|L(\frac 12, \operatorname{sym}^2 f)|^{8} \ll (\log \kappa)^{O(1)}.
\end{align}

  It follows that
\begin{align*}
 &  \sum_{(\log\log \kappa)^3 \leq 2^m \leq \log \kappa}\mathop{{\sum}^{h}}\limits_{\substack{f \in \mathcal{P}(m) }}|L(\frac 12, \operatorname{sym}^2 f)|^{2} \mathcal{N}(f, 2k-2) \\
 \ll & \sum_{(\log\log \kappa)^3 \leq 2^m \leq \log \kappa} \exp\left( -(\log 2)(\log\log \kappa)^{3/2}/4 \right)(\log \kappa)^{O_k(1)}  \\
 \ll & (\log \kappa)^{2k^2+1}(\log \log \kappa)^{-1}.
\end{align*}

   We deduce from the above that we may also assume that $0 \leq m \leq (3/\log 2)\log\log\log \kappa$. Further note that
\begin{align*}
\begin{split}
 \mathop{{\sum}^{h}}\limits_{\substack{f \in \mathcal{S}(0) }}1  \leq &  \mathop{{\sum}^{h}}\limits_{\substack{f\in H_{\kappa}}} \sum^{\mathcal{J}}_{l=1}
\Big ( \frac {10^3}{\ell_1}{|\mathcal
M}_{1, l}(f)| \Big)^{2\lceil 1/(10^3\alpha_{1})\rceil }.
\end{split}
\end{align*}

   Recall the function $s(n, x)$ defined in Section \ref{sec 6.1}, we see by \eqref{lambdaf} that the right-hand side expression above equals
\begin{align}
\label{M1fbound}
\begin{split}
 &  \mathop{{\sum}^{h}}\limits_{\substack{f\in H_{\kappa}}}  \sum^{\mathcal{J}}_{l=1}
\Big ( \frac {10^3}{\ell_1} {|\mathcal
M}_{1, l}(f)| \Big)^{2\lceil 1/(10^3\alpha_{1})\rceil } \\
= & \sum^{\mathcal{J}}_{l=1}
\mathop{{\sum}^{h}}\limits_{\substack{f\in H_{\kappa}}} \Big ( \frac {10^3}{\ell_1} \Big)^{2\lceil /(10^3\alpha_{1})\rceil } \sum_{ \substack{ n \\ \Omega(n) = 2\lceil 1/(10^3\alpha_{1})\rceil \\ p|n \implies
p \in P_1}}
\frac{(2\lceil 1/(10^3\alpha_{1})\rceil  )!s(n,\kappa^{\alpha_l})}{\sqrt{n}}\frac{1
  }{w(n)}\sum_{t|n}c_n(t)\lambda_f(t^2).
\end{split}
\end{align}

   Note that the largest $t^2$ appearing on the right-hand side expression above does not exceed
\begin{align*}
  n^2 \ll (\kappa^{\alpha_1})^{4\lceil 1/(10^3\alpha_{1})\rceil}  \ll \kappa.
\end{align*}

   We then apply \eqref{Deltaest} to evaluate the last sum in \eqref{M1fbound} to see by the above discussions together with the observation that $|s(n, \kappa^{\alpha_l})| \leq 1$ for all $n$ appearing in the sum and the estimations given in \eqref{Stirling}, \eqref{sumoverell}, \eqref{cnt} that the contribution from the error term in \eqref{Deltaest} is
\begin{align*}
\begin{split}
  \ll &  e^{-\kappa}\mathcal{J} \Big ( \frac {10^3}{\ell_1} \Big)^{2\lceil 1/(10^3\alpha_{1})\rceil } \sum_{ \substack{ n \\ \Omega(n) = 2\lceil 1/(10^3\alpha_{1})\rceil \\ p|n \implies
p \in P_1}}
\frac{(2\lceil 1/(10^3\alpha_{1})\rceil  )!s(n,\kappa^{\alpha_l})}{\sqrt{n}}\frac{1
  }{w(n)}\sum_{t|n}c_n(t)\\
\ll &  e^{-\kappa}\mathcal{J}\Big ( \frac {10^3}{\ell_1} \Big)^{2\lceil 1/(10^3\alpha_{1})\rceil } \big(\sum_{p \in P_1} \frac{3}{\sqrt{p}} \big )^{2\lceil 1/(10^3\alpha_{1})\rceil} \\
\ll & \kappa e^{-(\log\log \kappa)^{2}/20} .
\end{split}
\end{align*}

   Meanwhile, the contribution from the main term in \eqref{Deltaest}  is
\begin{align}
\label{S0bound}
\begin{split}
  \ll & \mathcal{J} \Big ( \frac {10^3}{\ell_1} \Big)^{2\lceil 1/(10^3\alpha_{1})\rceil } \sum_{ \substack{ n \\ \Omega(n) = 2\lceil 1/(10^3\alpha_{1})\rceil \\ p|n \implies
p \in P_1}}
\frac{(2\lceil 1/(10^3\alpha_{1})\rceil  )!s(n,\kappa^{\alpha_l})}{\sqrt{n}}\frac{1
  }{w(n)}c_n(1) \\
\ll &  \mathcal{J} \Big ( \frac {10^3}{\ell_1} \Big)^{2\lceil 1/(10^3\alpha_{1})\rceil }(2\lceil 1/(10^3\alpha_{1})\rceil  )!
 \sum_{ \substack{ n \\ \Omega(n) = 2\lceil 1/(10^3\alpha_{1})\rceil \\ p|n \implies
p \in P_1}}
\frac{(\alpha^{-1/4}_{1})^{\Omega(n)-2\lceil 1/(10^3\alpha_{1})\rceil }c_n(1)}{w(n)\sqrt{n}} \\
  \ll & \mathcal{J} (2\lceil 1/(10^3\alpha_{1})\rceil)\big( \frac {2\lceil 1/(10^3\alpha_{1})\rceil \alpha^{1/4}_1}{ e} \big )^{2\lceil 1/(10^3\alpha_{1})\rceil} \Big ( \frac {10^3}{\ell_1} \Big)^{2\lceil 1/(10^3\alpha_{1})\rceil }\Big (\prod_{p \in P_1 } \big ( \sum^{\infty}_{i=0}\frac {\alpha^{-i/4}_1c_{p^i}(1)}{i!p^{i/2}}\big)\Big ),
\end{split}
\end{align}
 where the last estimation above follows from \eqref{Stirling}  and the observation that $c_n(1)$ is multiplicative. Further using the estimation that $|c_n(1)| \leq 3^{\Omega(n)}$, we see that when $p \leq (6\alpha^{-1/4}_1)^2$,
\begin{align*}
\begin{split}
 \sum^{\infty}_{i=0}\frac {\alpha^{-i/4}_1c_{p^i}(1)}{i!p^{i/2}} \ll & \exp (\frac {3\alpha^{-1/4}_1}{\sqrt{p}}).
\end{split}
\end{align*}

  On the other hand, using the observation $c_p(1)=0$ and the estimation that $1+x \leq e^x$ we see that when $p > (6\alpha^{-1/4}_1)^2$, we have
\begin{align*}
\begin{split}
 \sum^{\infty}_{i=0}\frac {\alpha^{-i/4}_1c_{p^i}(1)}{i!p^{i/2}} \leq & \exp (\frac {\alpha^{-1/2}_1}{2p}+\frac {9\alpha^{-3/4}_1}{p^{3/2}}).
\end{split}
\end{align*}

  We apply the above two estimations in the last expression in \eqref{S0bound} to see that it is
\begin{align}
\label{S0main}
\begin{split}
\ll & \mathcal{J} (2\lceil 1/(10^3\alpha_{1})\rceil)\big( \frac {2\lceil 1/(10^3\alpha_{1})\rceil \alpha^{1/4}_1}{ e} \big )^{2\lceil 1/(10^3\alpha_{1})\rceil} \Big ( \frac {10^3}{\ell_1} \Big)^{2\lceil 1/(10^3\alpha_{1})\rceil } \\
& \times \exp (\sum_{p  \leq (6\alpha^{-1/4}_1)^2} \frac {3\alpha^{-1/4}_1}{\sqrt{p}})\exp (\sum_{p  \leq \kappa^{\alpha_1}} (\frac {\alpha^{-1/2}_1}{2p}+\frac {9\alpha^{-3/4}_1}{p^{3/2}})).
\end{split}
\end{align}

 We now apply Lemma \ref{RS} and the definition of $\alpha_1$ given in \eqref{alphadef} to see that
\begin{align*}
\begin{split}
 \sum_{p  \leq \kappa^{\alpha_1}} (\frac {\alpha^{-1/2}_1}{2p}+\frac {9\alpha^{-3/4}_1}{p^{3/2}})= (\frac 12+o(1))\alpha_1^{-1}.
\end{split}
\end{align*}

  It follows from the above estimation that upon taking $A$ large enough in the definition of $\ell_j$,
  the last expression in \eqref{S0main} implies that
 \begin{align}
\label{S0upperbound}
\begin{split}
 \mathop{{\sum}^{h}}\limits_{\substack{f \in \mathcal{S}(0) }}1 \ll  e^{-\alpha_1^{-1}/20}=e^{-(\log\log \kappa)^{2}/20}  .
\end{split}
\end{align}

   We then deduce via H\"older's inequality that
\begin{align}
\label{LS0bound}
\begin{split}
& \mathop{{\sum}^{h}}\limits_{\substack{f \in S(0)}}|L(\frac 12, \operatorname{sym}^2 f)|^{2} \mathcal{N}(f, 2k-2)
\leq  \Big ( \mathop{{\sum}^{h}}\limits_{\substack{f \in S(0)}} 1\Big )^{1/4} \Big (
\mathop{{\sum}^{h}}\limits_{\substack{f \in \in H_{\kappa}}} |L(\frac 12, \operatorname{sym}^2 f)|^{8}  \Big )^{1/4} \Big ( \mathop{{\sum}^{h}}\limits_{\substack{f \in \in H_{\kappa}}} \mathcal{N}(f, 2k-2)^{2}   \Big )^{1/2}.
\end{split}
\end{align}

  We use the bounds given in \eqref{N2k2bound} and \eqref{L8upperbound} in \eqref{LS0bound} to conclude that
\begin{align*}
\mathop{{\sum}^{h}}\limits_{\substack{f\in S(0)}}|L(\frac 12, \operatorname{sym}^2 f)|^{2} \mathcal{N}(f, 2k-2)    \ll (\log \kappa)^{2k^2+1}.
\end{align*}

  We now define
\begin{align*}
  \mathcal{T} =& \{ f \in H_{\kappa} : |{\mathcal P}_{1}(f)| \leq  \frac {\ell_1}{10^3} \}.
\end{align*}
   We denote $\mathcal{T}^c$ for the complementary of $\mathcal{T}$ in $H_{\kappa}$. Similar to our approach above, we have
\begin{align*}
\begin{split}
 \mathop{{\sum}^{h}}\limits_{\substack{f \in \mathcal{T}  }}1  \leq &  \mathop{{\sum}^{h}}\limits_{\substack{f\in H_{\kappa} }}
\Big ( \frac {10^3}{\ell_1}{|\mathcal
P}_{1}(f)| \Big)^{2\lceil /(10^3\alpha_{1})\rceil }\ll  \kappa e^{-(\log\log \kappa)^{2}/20}.
\end{split}
\end{align*}

  Thus we may further assume that $j \geq 1$ and $f \in \mathcal{T}$. Note that
\begin{align*}
 & \left\{ f \in \mathcal{T}, f \in \mathcal{P}(m), f \in \mathcal{S}(j), 0 \leq m \leq (3/\log 2)\log\log\log \kappa, 1 \leq j \leq \mathcal{J} \right \} =  \bigcup_{m=0}^{(3/\log 2)\log\log\log \kappa}\bigcup_{j=1}^{ \mathcal{J}} \Big (\mathcal{S}(j)\bigcap \mathcal{P}(m) \bigcap \mathcal{T}\Big ).
\end{align*}
 It thus suffices to show that
\begin{align}
\label{sumovermj}
  \sum_{m=0}^{(3/\log 2)\log\log\log \kappa}\sum_{j=1}^{\mathcal{J}}\mathop{{\sum}^{h}}\limits_{\substack{f \in \mathcal{S}(j)\bigcap \mathcal{P}(m)\bigcap \mathcal{T}}} |L(\frac 12, \operatorname{sym}^2 f)|^{2} \mathcal{N}(f, 2k-2)
   \ll (\log \kappa)^{2k^2+1}.
\end{align}

  We consider the above sum of $|L(\frac 12, \operatorname{sym}^2 f)|^{2} \mathcal{N}(f, 2k-2)$ over $\mathcal{S}(j)\bigcap \mathcal{P}(m) \bigcap \mathcal{T}$ by fixing an $m$ such that $0 \leq m \leq (3/\log 2)\log\log\log \kappa$ and fixing a $j$ such that $1 \leq j \leq \mathcal{J}$.  We deduce from \eqref{basicest} that
\begin{align}
\label{zetaNbounds}
\begin{split}
 &  |L(\frac 12, \operatorname{sym}^2 f)|^{2} \mathcal{N}(f, 2k-2) \\
\ll & (\log \kappa)\exp \left(\frac {4}{\alpha_j} \right) \exp \Big (
 2\sum^j_{l=1}{\mathcal M}_{l,j}(f)+2 \sum^{\log \log \kappa /2}_{m=0}P_m(f) \Big )\mathcal{N}(f, 2k-2) \\
\ll & (\log \kappa)\exp \left(\frac {4}{\alpha_j} \right) \exp \Big (
 2\sum^j_{l=1}{\mathcal M}_{l,j}(f)+2 \sum^{\log \log \kappa /2}_{m=0}P_m(f) \Big )\mathcal{N}_1(f, 2k-2)\prod^{\mathcal J}_{l=2}\mathcal{N}_l(f, 2k-2).
\end{split}
\end{align}

   As $f \in \mathcal{T}$, we set $\alpha= 2k-2$ in \eqref{Njest1} to deduce that
\begin{align}
\label{N1bound}
\begin{split}
  \mathcal{N}_1(f, 2k-2)= \exp\Big ( (2k-2) {\mathcal P}_{1}(f)\Big ) (1+O(e^{-\ell_1})).
\end{split}
 \end{align}

  We apply the above in \eqref{zetaNbounds} to see that
\begin{align*}
\begin{split}
 &  |L(\frac 12, \operatorname{sym}^2 f)|^{2} \mathcal{N}(f, 2k-2) \\
\ll & (\log \kappa)\exp \left(\frac {4}{\alpha_j} \right) \exp \Big (
 2 {\mathcal M}_{1,j}(f)+ 2(k-1) {\mathcal P}_{1}(f)+2 \sum^j_{l=2}{\mathcal M}_{l,j}(f)+2 \sum^{\log \log \kappa /2}_{m=0}P_m(f) \Big ) \\
&\times \prod^{\mathcal J}_{l=2}\mathcal{N}_l(f, 2k-2) .
\end{split}
\end{align*}

   We now want to separate the sums over $p \leq 2^{m+1}$ on the right-hand side expression above from those over $p >2^{m+1}$. To do so, we note that if $f \in \mathcal{P}(m)$, then
\begin{align}
\label{sump}
\begin{split}
 & \Big | 2 \sum_{  p  \leq 2^{m+1}}  \frac{\lambda_f(p^{2})}{\sqrt{p}}\frac{1}{p^{1/(\alpha_j\log \kappa)}}\frac{\log (\kappa^{\alpha_j}/p)}{\log \kappa^{\alpha_j}}+2(k-1) \sum_{  p  \leq 2^{m+1}} \frac{\lambda_f(p^{2})}{\sqrt{p}}+
  2 \sum_{p  \leq \log \kappa} \frac{\alpha^{4}_{f}(p, 1)+\alpha^{4}_{f}(p, 2)}{2p} \Big |
   \\
 \leq &  2\Big |\sum_{  p  \leq 2^{m+1}}  \frac{\lambda_f(p^{2})}{\sqrt{p}}\frac{1}{p^{1/(\alpha_j\log \kappa)}}\frac{\log (\kappa^{\alpha_j}/p)}{\log \kappa^{\alpha_j}}\Big |+2(1-k)\Big |\sum_{  p  \leq 2^{m+1}} \frac{\lambda_f(p^{2})}{\sqrt{p}}\Big |+
  \Big |\sum_{p  \leq 2^{m+1}}\frac{\alpha^{4}_{f}(p, 1)+\alpha^{4}_{f}(p, 2)}{p}\Big |
  +O(1),
\end{split}
\end{align}

  Now, using the estimation $\lambda_f(p^2) \leq d(p^2)=3$ and the relation given in \eqref{alpha}, we apply Lemma \ref{RS} and partial summation to see that
\begin{align*}
\begin{split}
 &  2\Big |\sum_{  p  \leq 2^{m+1}}  \frac{\lambda_f(p^{2})}{\sqrt{p}}\frac{1}{p^{1/(\alpha_j\log \kappa)}}\frac{\log (\kappa^{\alpha_j}/p)}{\log \kappa^{\alpha_j}}\Big |+2(1-k)\Big |\sum_{  p  \leq 2^{m+1}} \frac{\lambda_f(p^{2})}{\sqrt{p}}\Big | \ll 6(2-k)\sum_{  p  \leq 2^{m+1}} \frac{1}{\sqrt{p}} \ll_k \frac {2^{m/2}}{m}, \\
& \Big |\sum_{p  \leq 2^{m+1}} \frac{\alpha^{4}_{f}(p, 1)+\alpha^{4}_{f}(p, 2)}{p}\Big | \leq 2\sum_{p  \leq 2^{m+1}} \frac{1}{p}
   =2\log (m+1) \ll \frac {2^{m/2}}{m}.
\end{split}
\end{align*}
  We then conclude that there is a constant $E$ depending on $k$ only such that the expressions in \eqref{sump} are
  $\leq \frac {E2^{m/2}}{m}$. It follows that
\begin{align}
\label{LboundinSP0}
\begin{split}
  &  \mathop{{\sum}^{h}}\limits_{\substack{f \in \mathcal{S}(j)\bigcap \mathcal{P}(m)\bigcap \mathcal{T}}} |L(\frac 12, \operatorname{sym}^2 f)|^{2} \mathcal{N}(f, 2k-2) \\
   \ll & (\log \kappa) e^{\frac {E2^{m/2}}{m}} \exp \left(\frac {4}{\alpha_j} \right) \mathop{{\sum}^{h}}\limits_{\substack{f \in \mathcal{S}(j)\bigcap \mathcal{P}(m)}}
\exp \Big ( 2 {\mathcal M}'_{1,j}(f)+2 \sum^j_{l=2}{\mathcal M}_{l,j}(f)\Big )\prod^{\mathcal J}_{l=2}\mathcal{N}_l(f, 2k-2) \\
  \\
\ll &  (\log \kappa) e^{\frac {E2^{m/2}}{m}} \exp \left(\frac {4}{\alpha_j} \right) \\
& \times \mathop{{\sum}^{h}}\limits_{\substack{f \in \mathcal{S}(j)}} \Big (2^{m/10}|P_m(f)| \Big )^{2\lceil 2^{m/2}\rceil }
\exp \Big ( 2 {\mathcal M}'_{1,j}(f)+2 \sum^j_{l=2}{\mathcal M}_{l,j}(f)\Big )\prod^{\mathcal J}_{l=2}\mathcal{N}_l(f, 2k-2),
\end{split}
 \end{align}
   where we define
\begin{align*}
\begin{split}
 {\mathcal M}'_{1,j}(f)= \sum_{ 2^{m+1}< p \leq \kappa^{\alpha_1} }
 \frac{\lambda_f(p^{2})}{\sqrt{p}}c_j(p,k),
\end{split}
\end{align*}
  and where we set for any $1 \leq j \leq \mathcal{J}$ and any real number $n$,
\begin{align*}
\begin{split}
  c_j(p,n)= s(p, \kappa^{\alpha_j})+n-1.
\end{split}
 \end{align*}

 As $0 \leq m \leq (3/\log 2)\log\log\log \kappa $ and $\kappa$ is large, we have by Lemma \ref{RS} and partial summation,
\begin{align*}
\begin{split}
 & \Big | \sum_{ p \leq 2^{m+1}  } \frac{\lambda_f(p^{2})}{\sqrt{p}}c_j(p,k)\Big |
\leq  3(2-k)\sum_{ p < 2^{m+1}  }
 \frac{1}{\sqrt{p}} \leq 100(2-k)(\log \log \kappa )^{3/2}(\log \log \log \kappa )^{-1}.
\end{split}
 \end{align*}

  It follows from this that for $f \in \mathcal{S}(j)\bigcap \mathcal{T} $ and large $\kappa$,
\begin{align*}
\begin{split}
  |2{\mathcal M}'_{1,j}(f)| \leq & 200(2-k)(\log \log \kappa )^{3/2}(\log \log \log \kappa )^{-1}+2|{\mathcal M}_{1,j}(f)|+2|(k-1){\mathcal P}_{1}(f)| \leq \frac {\ell_1}{60} .
\end{split}
 \end{align*}

  The above estimation allows us to apply \eqref{Njest1} to see that
\begin{align}
\label{eM1bound}
\begin{split}
\exp\Big ( 2{\mathcal M}'_{1,j}(f)\Big ) \ll  E_{\ell_1}( 2{\mathcal M}'_{1,j}(f)) .
\end{split}
 \end{align}

    As we also have $|{\mathcal M}_{l, j}(f)| \leq  \ell_l/10^3$ when $f \in \mathcal{S}(j)$, we repeat our arguments above to see that,
\begin{align}
\label{eMNprodest1}
\begin{split}
\exp \Big ( 2 {\mathcal M}_{l,j}(f)\Big ) \leq
(1+O(e^{-\ell_l})) E_{\ell_l}( 2{\mathcal M}_{l,j}(f)).
\end{split}
 \end{align}

   We apply the estimations given in \eqref{eM1bound}, \eqref{eMNprodest1} in \eqref{LboundinSP0} to see that
\begin{align*}
\begin{split}
  &  \mathop{{\sum}^{h}}\limits_{\substack{f \in \mathcal{S}(j)\bigcap \mathcal{P}(m)\bigcap \mathcal{T}}} |L(\frac 12, \operatorname{sym}^2 f)|^{2} \mathcal{N}(f, 2k-2) \\
\ll &  (\log \kappa) e^{\frac {E2^{m/2}}{m}} \exp \left(\frac {4}{\alpha_j} \right) \sum_{f \in \mathcal{S}(j)} \Big (2^{m/10}|P_m(f)| \Big )^{2\lceil 2^{m/2}\rceil } E_{\ell_1}( 2{\mathcal M}'_{1,j}(f))  \\
& \times
\prod^{j}_{l=2} \Big (1+O\big(e^{-\ell_l}\big)\Big )E_{\ell_l}( 2{\mathcal M}_{l,j}(f)){\mathcal N}_{l}(f, 2k-2)  \times  \prod^{\mathcal J}_{l=j+1} {\mathcal N}_{l}(f, 2k-2) \\
\ll &  (\log \kappa) e^{\frac {E2^{m/2}}{m}} \exp \left(\frac {4}{\alpha_j} \right) \mathop{{\sum}^{h}}\limits_{\substack{f \in \mathcal{S}(j)}} \Big (2^{m/10}|P_m(f)| \Big )^{2\lceil 2^{m/2}\rceil }E_{\ell_1}( 2{\mathcal M}'_{1,j}(f))  \\
& \times
\prod^{j}_{l=2} E_{\ell_l}( 2{\mathcal M}_{l,j}(f)){\mathcal N}_{l}(f, 2k-2)  \times  \prod^{\mathcal J}_{l=j+1} {\mathcal N}_{l}(f, 2k-2).
\end{split}
\end{align*}

   We then deduce from the description on $\mathcal{S}(j)$ and the above that when $j \geq 1$,
\begin{align}
\label{upperboundprodE0}
\begin{split}
& \mathop{{\sum}^{h}}\limits_{\substack{f \in \mathcal{S}(j)\bigcap \mathcal{P}(m)\bigcap \mathcal{T}}}|L(\frac 12, \operatorname{sym}^2 f)|^{2} \mathcal{N}(f, 2k-2) \\
\ll & (\log \kappa) e^{\frac {E2^{m/2}}{m}} \exp \left(\frac {4}{\alpha_j} \right)
 \sum^{ \mathcal{J}}_{u=j+1}\mathop{{\sum}^{h}}\limits_{\substack{f \in \mathcal{S}(j)}} \Big (2^{m/10}|P_m(f)| \Big )^{2\lceil 2^{m/2}\rceil }E_{\ell_1}( 2{\mathcal M}'_{1,j}(f))  \\
& \times \prod^{j}_{l=2} E_{\ell_l}( 2{\mathcal M}_{l,j}(f)){\mathcal N}_{l}(f, 2k-2)\prod^{\mathcal J}_{l=j+2} {\mathcal N}_{l}(f, 2k-2) \\
& \times \Big ( \frac {10^3}{\ell_{j+1}}|{\mathcal M}_{j+1,u}(f)|\Big)^{2\lceil 1/(10\alpha_{j+1})\rceil }{\mathcal N}_{j+1}(f, 2k-2).
\end{split}
\end{align}

   We further simplify the right-hand expression above by noting that when  $|(2k-2){\mathcal P}_{j+1}(f)| \leq \ell_{j+1}/60$, we have similar to \eqref{N1bound},
\begin{align}
\label{Njplusonebound0}
\begin{split}
 \mathcal{N}_{j+1}(f, 2k-2) \ll  \exp\Big ( 2(k-1) {\mathcal P}_{j+1}(f)\Big ) \ll \exp\Big ( \ell_{j+1}/60 \Big )
\ll 2^{2\lceil 1/(10^3\alpha_{j+1})\rceil }.
\end{split}
\end{align}
   While when $|(2k-2){\mathcal P}_{j+1}(f)| > \ell_{j+1}/60$, we have similar to \eqref{N22kbound},
\begin{align}
\label{Njplusonebound}
\begin{split}
 \mathcal{N}_{j+1}(f, 2k-2) \leq \Big( \frac{64 |{\mathcal P}_{j+1}(f)|}{\ell_{j+1}}\Big)^{\ell_{j+1}} \leq \Big( \frac {120 |{\mathcal P}_{j+1}(f)|}{\ell_{j+1}}\Big)^{2\lceil 1/(10^3\alpha_{j+1})\rceil }.
\end{split}
\end{align}

   We apply the estimations given in \eqref{Njplusonebound0} and \eqref{Njplusonebound} in \eqref{upperboundprodE0} to deduce that
\begin{align*}
\begin{split}
& \mathop{{\sum}^{h}}\limits_{\substack{f \in \mathcal{S}(j)\bigcap \mathcal{P}(m)\bigcap \mathcal{T}}}|L(\frac 12, \operatorname{sym}^2 f)|^{2} \mathcal{N}(f, 2k-2)  \\
\ll & (\log \kappa) e^{\frac {E2^{m/2}}{m}} \exp \left(\frac {4}{\alpha_j} \right)
 \sum^{ \mathcal{J}}_{u=j+1} \mathop{{\sum}^{h}}\limits_{\substack{f \in \mathcal{S}(j)}}\Big (2^{m/10}|P_m(f)| \Big )^{2\lceil 2^{m/2}\rceil }E_{\ell_1}( 2{\mathcal M}'_{1,j}(f)) \\
& \times \prod^{j}_{l=2} E_{\ell_l}( 2{\mathcal M}_{l,j}(f)){\mathcal N}_{l}(f, 2k-2)\prod^{\mathcal J}_{l=j+2} {\mathcal N}_{l}(f, 2k-2)  \\
& \times \Big ( \Big ( \frac {2 \cdot 10^3}{\ell_{j+1}}|{\mathcal M}_{j+1,u}(f)|\Big )^{2\lceil 1/(10^3\alpha_{j+1})\rceil }+ \Big( \frac {120 |{\mathcal P}_{j+1}(f)|}{\ell_{j+1}}\Big)^{2\lceil 1/(10^3\alpha_{j+1})\rceil} \Big ( \frac {10^3}{\ell_{j+1}}|{\mathcal M}_{j+1,u}(f)| \Big )^{2\lceil 1/(10^3\alpha_{j+1})\rceil }\Big ) \\
\ll & (\log \kappa)e^{\frac {E2^{m/2}}{m}} \exp \left(\frac {4}{\alpha_j} \right)
 \sum^{ \mathcal{J}}_{u=j+1} \mathop{{\sum}^{h}}\limits_{\substack{f \in \mathcal{S}(j)}}\Big (2^{m/10}|P_m(f)| \Big )^{2\lceil 2^{m/2}\rceil }E_{\ell_1}( 2{\mathcal M}'_{1,j}(f)) \\
& \times \prod^{j}_{l=2} E_{\ell_l}( 2{\mathcal M}_{l,j}(f)){\mathcal N}_{l}(f, 2k-2)\prod^{\mathcal J}_{l=j+2} {\mathcal N}_{l}(f, 2k-2)  \\
& \times \Big ( \Big (  \frac {2 \cdot 10^3}{\ell_{j+1}}|{\mathcal M}_{j+1,u}(f)|\Big )^{2\lceil 1/(10^3\alpha_{j+1})\rceil } +\Big( \frac {120 |{\mathcal P}_{j+1}(f)|}{\ell_{j+1}}\Big)^{4\lceil 1/(10^3\alpha_{j+1})\rceil} +\Big (  \frac {10^3}{\ell_{j+1}}|{\mathcal M}_{j+1,u}(f)|\Big )^{4\lceil 1/(10^3\alpha_{j+1})\rceil }\Big ).
\end{split}
\end{align*}

  As the treatments are similar, it suffices to estimate the expression given by
\begin{align}
\label{Sdef}
\begin{split}
 S:=& (\log \kappa)e^{\frac {E2^{m/2}}{m}} \exp \left(\frac {4}{\alpha_j} \right)
 \sum^{ \mathcal{J}}_{u=j+1} \mathop{{\sum}^{h}}\limits_{\substack{f \in \mathcal{S}(j)}}\Big (2^{m/10}|P_m(f)| \Big )^{2\lceil 2^{m/2}\rceil }E_{\ell_1}( 2{\mathcal M}'_{1,j}(f)) \\
& \times \prod^{j}_{l=2} E_{\ell_l}( 2{\mathcal M}_{l,j}(f)){\mathcal N}_{l}(f, 2k-2)\prod^{\mathcal J}_{l=j+2} {\mathcal N}_{l}(f, 2k-2) \times \Big (   \frac {2 \cdot 10^3}{\ell_{j+1}}|{\mathcal M}_{j+1,u}(f)| \Big )^{2\lceil 1/(10^3\alpha_{j+1})\rceil }  \\
:=&  (\log \kappa)e^{\frac {E2^{m/2}}{m}} \exp \left(\frac {4}{\alpha_j} \right)\sum^{ \mathcal{J}}_{u=j+1}S_u.
\end{split}
\end{align}

  It remains to evaluate $S_u$ for a fixed $u$ and we do so by expanding the factors involved in $S_u$ into Dirichlet series. Note that such a series for $\Big (2^{m/10}|P_m(f)| \Big )^{2\lceil 2^{m/2}\rceil }$ is already given in \eqref{Pexpression}, a series for $\Big (   \frac {2 \cdot 10^3}{\ell_{j+1}}|{\mathcal M}_{j+1,u}(f)|\Big )^{2\lceil 1/(10^3\alpha_{j+1})\rceil }$ can be obtained similar to that given in \eqref{M1fbound}. Moreover, a series for $E_{\ell_1}( 2{\mathcal M}'_{1,j}(f))\prod^{j}_{l=2} E_{\ell_l}( 2{\mathcal M}_{l,j}(f)){\mathcal N}_{l}(f, 2k-2)\prod^{\mathcal J}_{l=j+2} {\mathcal N}_{l}(f, 2k-2)$ can obtained analogue to \eqref{Nexpression}. We further note that as $0 \leq m \leq (3/\log 2)\log\log\log \kappa$, these series can be written for simplicity as
\begin{align}
\label{factorseries}
\begin{split}
 & \Big (2^{m/10}|P_m(f)| \Big )^{2\lceil 2^{m/2}\rceil }= \sum_{n_1  \leq e^{(\log \log \kappa)^2}} u_{n_1}\eta(n_1), \\
 & \Big (   \frac {2 \cdot 10^3}{\ell_{j+1}}|{\mathcal M}_{j+1,u}(f)| \Big )^{2\lceil 1/(10^3\alpha_{j+1})\rceil }= \sum_{n_2  \leq \kappa^{4/10^3}} u_{n_2}\widetilde{\lambda}(n_2), \\
 & E_{\ell_1}( 2{\mathcal M}'_{1,j}(f))\prod^{j}_{l=2} E_{\ell_l}( 2{\mathcal M}_{l,j}(f)){\mathcal N}_{l}(f, 2k-2)\prod^{\mathcal J}_{l=j+2} {\mathcal N}_{l}(f, 2k-2) = \sum_{n_3  \leq \kappa^{40e^A 10^{-M/4}}} u_{n_3} \widetilde{\lambda}(n_3),
\end{split}
\end{align}
   where $|u_{n_i}| \ll  \kappa^{\varepsilon}, 1 \leq i \leq 3$.

  We multiply the above Dirichlet series together, noting that $n_i, 1 \leq i \leq 3$ are mutually co-prime. Then using the expansions for $\widetilde{\lambda}(n), \eta(n)$ given in \eqref{lambdaf} and \eqref{eta} and further using the estimations for the coefficients involved in these expansions given in \eqref{cnt}, \eqref{dnt}, we see that we may write $S_u$ as
\begin{align}
\label{Suexpression}
 S_u= \sum_{n  \leq \kappa^{1/10}} v_n  \sum_{t|n^2}e_n(t)\mathop{{\sum}^{h}}\limits_{\substack{f \in \mathcal{S}(j)}}\lambda_f(t^2),
\end{align}
  where $|v_{n}| \ll  \kappa^{\varepsilon}$ and
\begin{align*}
  \sum_{t|n^2}|e_n(t)| \leq 8^{\Omega(n)}.
\end{align*}

  We apply \eqref{Deltaest} to evaluate the last sum in \eqref{Suexpression}. Notice that we have $t^2 \ll \kappa$ for all $t$ involved in the sum. It follows from this and our discussions above that the contribution from the error term in \eqref{Deltaest} is
\begin{align*}
 \ll e^{-\kappa} \sum_{n  \leq \kappa^{1/10}} \kappa^{\varepsilon} \sum_{t|n^2}|e_n(t)| \ll \kappa^{-1},
\end{align*}
  which is negligible.

 It then suffices to consider on the main term contribution from  \eqref{Deltaest}, which equals
\begin{align*}
  \sum_{n  \leq \kappa^{1/10}} v_n \sum_{t|n^2}e_n(1).
\end{align*}
  Observing that $e_n(1)$ is multiplicative, so that we have by \eqref{factorseries}, \eqref{lambdaf} and \eqref{eta},
\begin{align*}
  \sum_{n  \leq \kappa^{1/10}} v_n \sum_{t|n^2}e_n(1)=(\sum_{n_1} u_{n_1}d_{n_1}(1))(\sum_{n_2} u_{n_2}c_{n_2}(1))(\sum_{n_3} u_{n_3}c_{n_3}(1)).
\end{align*}

  Using arguments that lead to estimations given in \eqref{Pmest} and \eqref{S0upperbound}, we see that
\begin{align}
\label{Su1stestmain}
\begin{split}
   \sum_{n_1} u_{n_1}d_{n_1}(1) \ll & 2^{-2^{m/2}}, \\
 \sum_{n_2} u_{n_2}c_{n_2}(1) \ll & e^{-10^3/\alpha_{j+1}}.
\end{split}
\end{align}

  It remains to evaluate $\sum_{n_3} u_{n_3}c_{n_3}(1)$. By the multiplicity of $c_n(1)$ again, we see that for some real numbers $\tilde{v}_{n_l}, 1 \leq l \leq \mathcal{J}$,
\begin{align*}
   \sum_{n_3} u_{n_3}c_{n_3}(1)=\prod_{l} (\sum_{n_l}\tilde{v}_{n_l}b_l(n_l)c_{n_1}(1)).
\end{align*}

   We examine the factor $\sum\limits_{n_l}\tilde{v}_{n_l}b_l(n_l)c_{n_1}(1)$ in the above expression that arises from the product
\begin{align*}
   E_{\ell_l}( 2{\mathcal M}_{l,j}(f)){\mathcal N}_{l}(f, 2k-2),
\end{align*}
  for some $2 \leq l \leq j$. Similar to \eqref{5.1}, we write
\begin{align*}
 E_{\ell_l}( 2{\mathcal M}_{l,j}(f))= \sum_{n} \frac{2^{\Omega(n)} s(n, \kappa^{\alpha_j})}{w(n)\sqrt{n}}b_{l}(n) \widetilde{\lambda}(n), \quad
 {\mathcal N}_{l}(f, 2k-2)= \sum_{n} \frac{(2k-2)^{\Omega(n)}}{w(n)\sqrt{n}}b_{l}(n) \widetilde{\lambda}(n).
\end{align*}

  We now extract the sum over terms involving with $c_n(1)$ in $ E_{\ell_l}( 2{\mathcal M}_{l,j}(f)){\mathcal N}_{l}(f, 2k-2)$ using the above expressions to see that it equals
\begin{align}
\label{sumsqurei}
 \sum_{n, n'}\frac{2^{\Omega(n)}(2k-2)^{\Omega(n')}s(n, \kappa^{\alpha_j})b_l(n)b_l(n')}{w(n)w(n')\sqrt{nn'}}c_{nn'}(1).
\end{align}
   Note that the factor $b_l(n)$ restricts $n$ to have
  all prime factors in $P_l$ such that $\Omega(n) \leq \ell_l$. If we remove the restriction on $\Omega(n)$, then the sum in \eqref{sumsqurei} becomes
\begin{align}
\label{sumsqureififree}
 \sum_{\substack{n, n' \\ p | n \Rightarrow p \in P_l}}\frac{2^{\Omega(n)}(2k-2)^{\Omega(n')}s(n, \kappa^{\alpha_j})b_l(n')}{w(n)w(n')\sqrt{nn'}}c_{nn'}(1).
\end{align}
     On the other hand, using Rankin's trick by noticing that $2^{n-\ell_l}\ge 1$ if $\Omega(n) > \ell_l$, we see that the error introduced this way does not exceed
\begin{align}
\label{sumsqureierror1}
 \sum_{\substack{n, n' \\ p | nn' \Rightarrow p \in P_l}}\frac{2^{\Omega(n)-\ell_l}2^{\Omega(n)}|2k-2|^{\Omega(n')}s(n, \kappa^{\alpha_j})}{w(n)w(n')\sqrt{nn'}}|c_{nn'}(1)|.
\end{align}

   Similarly, we may remove the restriction of $b_l(n')$ on $\Omega(n')$ to further write the expression in \eqref{sumsqureififree} as
\begin{align}
\label{sumsqureifif'ifree}
 \sum_{\substack{n, n' \\ p | nn' \Rightarrow p \in P_l}}\frac{2^{\Omega(n)}(2k-2)^{\Omega(n)}s(n, \kappa^{\alpha_j})}{w(n)w(n')\sqrt{nn'}}c_{nn'}(1)+O\Big(\sum_{\substack{n, n' \\ p | nn' \Rightarrow p \in P_l}}\frac{2^{\Omega(n')-\ell_l}2^{\Omega(n)}|2k-2|^{\Omega(n')}s(n, \kappa^{\alpha_j})}{w(n)w(n')\sqrt{nn'}}|c_{nn'}(1)|\Big ).
\end{align}

    Note that both the main term and the error term  above as well as the expression in \eqref{sumsqureierror1} are now multiplicative functions of $n, n'$ and hence can be evaluated in terms of products over primes. We then apply Lemma \ref{RS}, \eqref{cvalue},  the estimations $|c_n(1)| \leq 3^{\Omega(n)}$ and $|s(n, \kappa^{\alpha_j})| \leq 1$ for all $n$ involved,  to see that
\begin{align*}
\begin{split}
 \sum_{\substack{n, n' \\ p | nn' \Rightarrow p \in P_l}}\frac{2^{\Omega(n)}(2k-2)^{\Omega(n)}s(n, \kappa^{\alpha_j})}{w(n)w(n')\sqrt{nn'}}c_{nn'}(1)=& \prod_{p \in  P_l }\Big  (1+\frac 12(\frac {2s(p, \kappa^{\alpha_j})}{\sqrt{p}}+\frac {2k-2}{\sqrt{p}})^2+O\Big( \frac 1{p^{3/2}} \Big) \Big ) \\
 =& \prod_{p \in  P_l }\Big  (1+\frac {2k^2}{p}+O\Big( \frac {\log p}{p\log \kappa^{\alpha_j}}+\frac 1{p^{3/2}}  \Big) \Big ),
\end{split}
\end{align*}
  where the last expression above follows by observing that for $p \leq \kappa^{\alpha_j}$,
\begin{align*}
  s(p, \kappa^{\alpha_j})=1+O\Big( \frac {\log p}{\log \kappa^{\alpha_j}} \Big ).
\end{align*}

    We further apply the estimation $1+x \leq e^x$ for any real $x$ to see that
\begin{align*}
\begin{split}
  \prod_{p \in  P_l }\Big  (1+\frac {2k^2}{p}+O\Big( \frac {\log p}{p\log \kappa^{\alpha_j}}+\frac 1{p^{3/2}}  \Big) \Big )
\leq \exp \Big ( \sum_{p \in  P_l } \frac {2k^2}{p}+O\Big( \sum_{p \in  P_l } \Big (\frac {\log p}{p\log \kappa^{\alpha_j}}+\frac 1{p^{3/2}}  \Big) \Big)\Big ).
\end{split}
\end{align*}

   We evaluate the error term in \eqref{sumsqureifif'ifree} and the expression in \eqref{sumsqureierror1} similarly to see via \eqref{sumpj} that they both are
\begin{align*}
 \leq 2^{-\ell_l/2}\exp \Big ( \sum_{p \in  P_l } \frac {2k^2}{p}+O\Big( \sum_{p \in  P_l } \Big (\frac {\log p}{p\log \kappa^{\alpha_j}}+\frac 1{p^{3/2}}  \Big) \Big)\Big ).
\end{align*}

   It follows that the contribution from the sum over terms involving with $c_n(1)$ in $ E_{\ell_l}( 2{\mathcal M}_{l,j}(f)){\mathcal N}_{l}(f, 2k-2)$ is
\begin{align*}
 \leq \Big (1+O(2^{-\ell_l/2})\Big )\exp \Big ( \sum_{p \in  P_l } \frac {2k^2}{p}+O\Big( \sum_{p \in  P_l } \Big (\frac {\log p}{p\log \kappa^{\alpha_j}}+\frac 1{p^{3/2}}  \Big) \Big)\Big ).
\end{align*}

 In the same manner, we obtain a similar estimation for the contribution from the sum over terms involving with $c_n(1)$ in $E_{\ell_1}( 2{\mathcal M}'_{1,j}(f))$.  Also, the contribution from the sum over terms involving with $c_n(1)$ in ${\mathcal N}_{l}(f, 2k-2)$ for $j+2 \leq l \leq \mathcal{J}$ is
\begin{align*}
 \leq \Big (1+O(2^{-\ell_l/2})\Big )\exp \Big  (\prod_{p \in  P_l } \Big (\frac {(2k-2)^2}{2p}+O\Big( \frac 1{p^{3/2}} \Big) \Big )\Big ).
\end{align*}

  The above estimations allow us to see that
\begin{align}
\label{prodEestmain}
\begin{split}
  & \sum_{n_3} u_{n_3}c_{n_3}(1) \\
\leq & \prod^j_{l=1}\Big (1+O(2^{-\ell_l/2})\Big ) \prod^{\mathcal{J}}_{l=j+2}\Big (1+O(2^{-\ell_l/2})\Big ) \times \exp \Big (\sum_{p \in  \bigcup^j_{l=1}P_l }\frac {2k^2}{p}
 +O\Big(\sum_{p \in  \bigcup^j_{l=1}P_l }\Big (\frac {\log p}{p\log \kappa^{\alpha_j}}+\frac 1{p^{3/2}}\Big ) \Big )\Big ) \\
& \times \exp \Big (\sum_{p \in  \bigcup^{\mathcal{J}}_{l=j+2}P_l }\frac {(2k-2)^2}{2p}
 +O\Big (\sum_{p \in  \bigcup^j_{l=1}P_l }\frac 1{p^{3/2}}\Big)\Big ) \\
\ll & \exp \Big (\sum_{p \in  \bigcup^j_{l=1}P_l }\frac {2k^2}{p}
 +O\Big(\sum_{p \in  \bigcup^j_{l=1}P_l }\Big (\frac {\log p}{p\log \kappa^{\alpha_j}}+\frac 1{p^{3/2}}\Big ) \Big )\Big ) \times \exp \Big (\sum_{p \in  \bigcup^{\mathcal{J}}_{l=j+2}P_l }\frac {(2k-2)^2}{2p}
 +O\Big (\sum_{p \in  \bigcup^j_{l=1}P_l }\frac 1{p^{3/2}}\Big)\Big ).
\end{split}
\end{align}

    Note that we have
\begin{align*}
  \sum_{p \in  \bigcup^{\mathcal{J}}_{l=1}P_l }\frac 1{p^{3/2}} \ll 1.
\end{align*}
  Moreover, by Lemma \ref{RS}, we have
\begin{align*}
  \sum_{p \in  \bigcup^j_{l=1}P_l }\frac {\log p}{p\log \kappa^{\alpha_j}}=& \sum_{p \leq  \kappa^{\alpha_j} }\frac {\log p}{p\log \kappa^{\alpha_j}} \ll 1 \\
\sum_{p \in  \bigcup^{\mathcal{J}}_{l=j+2}P_l }\frac {(2k-2)^2}{2p}=& \sum_{p \in  \bigcup^{\mathcal{J}}_{l=j+2}P_l }\frac {2k^2}{p}+\sum_{p \in  \bigcup^{\mathcal{J}}_{l=j+2}P_l }\frac {(2k-2)^2-(2k)^2}{2p} \\
\leq &  \sum_{p \in  \bigcup^{\mathcal{J}}_{l=j+2}P_l }\frac {2k^2}{p}+2(2-4k)\log \frac {1}{\alpha_{j+1}}+O(1).
\end{align*}
  We thus conclude from the above that upon taking $M$ large enough, the last expression in \eqref{prodEestmain} is
\begin{align}
\label{Su2ndestmain}
  \ll & e^{2(2-4k)\log \frac {1}{\alpha_{j+1}}}\exp \Big (\sum_{p \in  \bigcup^{\mathcal{J}}_{l=1}P_l }\frac {2k^2}{p}\Big ) \ll e^{\frac {1}{\alpha_{j+1}}}(\log \kappa)^{2k^2},
\end{align}
   where the last estimation above follows from Lemma \ref{RS}.

   It follows from \eqref{Su1stestmain} and \eqref{Su2ndestmain} that we have
\begin{align*}
\begin{split}
   S_u \ll 2^{-2^{m/2}} e^{-200/\alpha_{j+1}}(\log \kappa)^{2k^2}.
\end{split}
\end{align*}

  Applying the above estimation in \eqref{Sdef} and making use of the observation that $20/\alpha_{j+1}=1/\alpha_j$, we deduce that
\begin{align*}
\begin{split}
 S \ll &  (\log \kappa)^{2k^2+1}e^{\frac {E2^{m/2}}{m}}2^{-2^{m/2}} \exp \left(-\frac {2}{\alpha_j} \right)(\mathcal{J}-j).
\end{split}
\end{align*}

  We further apply the above estimation in \eqref{upperboundprodE0} to see that
\begin{align}
\label{upperboundprodLN}
\begin{split}
 \mathop{{\sum}^{h}}\limits_{\substack{f \in \mathcal{S}(j)\bigcap \mathcal{P}(m)\bigcap \mathcal{T}}}|L(\frac 12, \operatorname{sym}^2 f)|^{2} \mathcal{N}(f, 2k-2)
\ll &  (\log \kappa)^{2k^2+1}e^{\frac {E2^{m/2}}{m}}2^{-2^{m/2}} \exp \left(-\frac {2}{\alpha_j} \right)(\mathcal{J}-j) \\
\ll  & (\log \kappa)^{2k^2+1}e^{\frac {E2^{m/2}}{m}}2^{-2^{m/2}} \exp \left(-\frac {1}{\alpha_j} \right),
\end{split}
\end{align}
 where the last estimation above follows from the observation that we have for $1 \leq j \leq \mathcal{J}-1$,
$$ \mathcal{J}-j \leq \frac{\log(1/\alpha_{j})}{\log 20}. $$

 Now summing the last expression of \eqref{upperboundprodLN} over $j$ and $m$ and noticing that these sums are convergent, we derive the desired estimation in \eqref{sumovermj} and hence completes the proof of Proposition \ref{Prop6}.

\vspace*{.5cm}

\noindent{\bf Acknowledgments.} The author would like to thank R. Khan for some helpful suggestions.
 The author is supported in part by NSFC grant 11871082.
\bibliography{biblio}
\bibliographystyle{amsxport}

\vspace*{.5cm}

\end{document}